\DeclareMathOperator{\li}{li}
\DeclareMathOperator{\Li}{Li}
\def\N{\mathbb{N}}
\newtheorem{theorem}{Theorem}
\newtheorem{corollary}[theorem]{Corollary}
\newtheorem{lemma}[theorem]{Lemma}
\newtheorem{prop}[theorem]{Proposition}
\theoremstyle{definition}
\newtheorem{definition}[theorem]{Definition}
\newtheorem{remark}[theorem]{Remark}
\begin{document}

\title{Sharper bounds for the error term in the Prime Number Theorem}

\author{Andrew Fiori, Habiba Kadiri, Joshua Swidinsky}
\address{Department of Mathematics and Statistics, University of Lethbridge, Canada}
\email{andrew.fiori@uleth.ca, habiba.kadiri@uleth.ca, jds26@sfu.ca}
\thanks{Andrew Fiori graciously acknowledges both the University of Lethbridge for their financial support (start-up grant and ULRF) as well as the support of NSERC Discovery Grant RGPIN-2020-05316.
Habiba Kadiri acknowledges both the University of Lethbridge for their financial support (ULRF) as well as the support of NSERC Discovery Grant RGPIN-2020-06731.
Joshua Swidinsky acknowledges receiving financial support through an NSERC USRA and from the University of Lethbridge for this work.
Computations performed in this work were conducted using infrastructure supported by WestGrid (\url{www.westgrid.ca}) and Compute Canada (\url{www.computecanada.ca}).
}

%\author{Andrew Fiori}
%\address{Department of Mathematics and Statistics, University of Lethbridge, Canada}
%\email{andrew.fiori@uleth.ca}
%\thanks{Andrew Fiori graciously acknowledges both the University of Lethbridge for their financial support (start-up grant and ULRF) as well as the support of NSERC Discovery Grant RGPIN-2020-05316.}

%\author{Habiba Kadiri}
%\address{Department of Mathematics and Statistics, University of Lethbridge, Canada}
%\email{habiba.kadiri@uleth.ca}
%\thanks{Habiba Kadiri acknowledges both the University of Lethbridge for their financial support (ULRF) as well as the support of NSERC Discovery Grant RGPIN-2020-06731.}

%\author{Joshua Swidinsky}
%\address{Department of Mathematics and Statistics, University of Lethbridge, Canada}
%\email{joshua.swidinsky@uleth.ca}
%\thanks{Joshua Swidinsky acknowledges receiving financial support through an NSERC USRA and from the University of Lethbridge for this work.}

%\thanks{Computations performed in this work were conducted using infrastructure supported by WestGrid(\url{www.westgrid.ca}) and Compute Canada (\url{www.computecanada.ca}).}

\keywords{Prime Number Theorem, Explicit Formulas, $\psi(x)$, $\theta(x)$, $\pi(x)$}
\subjclass[2020]{11A05, 11N25, 11M06, 11N56, 11M26} 

\begin{abstract}
We obtain bounds for the error term in the prime number theorem of the form
\[\left| \pi(x) - \Li(x) \right| \leq 9.2211\, x\sqrt{\log(x)} \exp\left( -0.8476 \sqrt{\log(x)} \right)\ \text{for all} \ x\ge 2,\]
as well as other classical forms, improving upon the various constants and ranges compared to those in the literature.
The strength and originality of our methods come from leveraging numerical results for small $x$ in order to improve both the asymptotic and numerical bounds one obtains.
We develop algorithms and formulas optimizing the conversion of both asymptotic and explicit numerical bounds from the prime counting function $\psi(x)$ to both $\theta(x)$ and $\pi(x)$.
\end{abstract}

\maketitle

%\todo[inline]{check long sentences and punctuation}

\section{Introduction}

The aim of this work is to provide explicit and efficient methods to convert between bounds on the error terms for various prime counting functions. 
Ultimately it provides new explicit and unconditional bounds on the error term in the Prime Number Theorem
%\todo{Review 2: reduce long sentences}
\begin{equation}\label{def-Epi}
E_{\pi}(x)=\left|\frac{\pi(x) - \Li(x)}{x /\log x}\right|,
\end{equation}
where $\Li(x)=\int_2^x\frac{dt}{\log (t)}$.
%\todo{Review 5: One can also compare $\pi(x)$ to $li(x)=\int_0^x\frac{dt}{\log (t)}$, Andrew says maybe swap order of sections 1.1 and 1.2.}
There is a long history of work in this area, dating back to works of Rosser and Schoenfeld (see \cite{Rosser, RosserSchoenfeld1,RosserSchoenfeld2,Schoenfeld}). 
We introduce the notation for the error terms associated to the Chebyshev functions
\begin{equation}\label{def-Etheta-Epsi}
E_{\theta}(x)=\left|\frac{\theta(x) -x}{x}\right|, \ \text{and}\ 
E_{\psi}(x)=\left|\frac{\psi(x) - x}{x}\right|.
\end{equation}
%The bounds we will provide come from bounds on $E_{\psi}(x)$ and $E_{\theta}(x)$ from which we ultimately obtain explicit bounds on $E_{\pi}(x)$. 
%While converting between these $E$-bounds is quite classical, 
%we aim here to provide very tight conversions thus getting the tightest bounds possible for $E_{\pi}(x)$ based on the original bounds on $E_{\psi}(x)$ and $E_{\theta}(x)$.
We are interested in two types of explicit bounds: asymptotic formulas and numerical values.
We develop algorithms and formulas to provide optimized conversions of both type.
Finally, we leverage our numerical results to improve the quality of the asymptotic formulas we obtain. More precisely, the key results of this paper are the following: %\todo{Added this sentence for 3}
\begin{itemize}
\item Effective conversions for asymptotic bounds from $E_{\psi}(x)$ to $E_{\theta}(x)$  are described in Proposition \ref{epsilon_asymp_theta_prop} and are applied to give a bound for $E_{\theta}(x)$ in Corollary \ref{rem-always-theta}. 
Effective conversions for asymptotic bounds from $E_{\theta}(x)$ to $E_{\pi}(x)$ are described in Theorem \ref{prop_asym_pi} and Corollary \ref{asymp-bnd-psi2pi}. 
The final asymptotic bounds we obtain on $\pi(x)$ are given in Corollary \ref{epsilon_asymp_pi_explicit}:
\begin{equation}\label{ineq-pi-exp}
\left| \pi(x) - \Li(x) \right| \leq 9.2211\, x\sqrt{\log(x)} \exp\left( -0.8476 \sqrt{\log(x)} \right), \ \text{for all}\ x\ge 2. 
\end{equation}
\item A mechanism to convert numerical bounds from $E_{\psi}(x)$ to $E_{\theta}(x)$ are found in Proposition \ref{prop:numpsitotheta}, and then from $E_{\theta}(x)$ to $E_{\pi}(x)$ in Theorem \ref{prop_num_pi}.
 Tables \ref{table:theta-small}, \ref{table:theta-med}, and \ref{table:theta} list numerical bounds which work for both $E_{\psi}(x)$ and $E_{\theta}(x)$. Finally, Table \ref{tab:pi} list numerical bounds for $E_{\pi}(x)$ as a consequence.
More detailed versions of these tables will be posted online in \cite{FKS3}. 
The complete data is available upon request to the authors.  
Though such tables of values can be unwieldy they can be useful for certain technical results, see for example \cite{Axler,Joh22}.
In addition to the tables we also give a few other bounds for $E_{\pi}(x)$ which may be of interest in Corollaries \ref{cor:alt1}, \ref{cor:alt2}, and \ref{cor:weak}, such as
\begin{equation}\label{ineq-pi-log}
\left| \pi(x) - \Li(x) \right| \leq 0.4298\frac{x}{\log(x)}, \ \text{for all}\ x\ge 2.
\end{equation}
\end{itemize}
We remind the reader that, depending on the size of $x$, strong bounds for $E_{\psi}(x)$ can be obtained from \cite{Bu16}, \cite{Bu18}, \cite{FKS}, and \cite{JohYan22}.

\subsection{Notation}
Because our bounds all derive from information about zeros of the $\zeta$-function, we shall need to make reference to values of $R$ so that  
$\zeta(s)$ has no zeros in the region $\operatorname{Re}(s) \geq 1 - \frac{1}{R\log \operatorname{Im}(s)}$ for $\operatorname{Im}(s) \geq 3$.
Throughout this paper $R$ denotes any fixed value with this property.
Where specific numerics are given they are relative to \[R=5.5666305,\] the validity of which follows from the work of \cite[Theorem 1 and Section 6.1]{TrudMos15}.
Note that since this paper was submitted, Mossighoff, Trudgian, and Yang \cite{MoTrYa22} have announced a reduced value of $5.558691$ for $R$. The numerical values presented in the current article do not reflect this improvement. 
%\todo{Review 11: adding as in psi paper: "Note that since this paper was submitted, Mossighoff, Trudgian, and Yang \cite{MoTrYa22} have announced a reduced value of $5.558691$ for $R$. The current tables do not reflect this improvement."} 
For clarity, we recall that
\[ \li(x) = \int_0^{x} \frac{1}{\log(t)}{\rm dt} ,\ \text{and}\  \Li(x) =  \int_2^{x} \frac{1}{\log(t)}{\rm dt}. \]
Since $\li(x)-\Li(x)=\li(2)\approx 1.04516,$ there is no asymptotic difference in comparing $\pi(x)$ to either $\li(x)$ or $\Li(x)$.

\subsection{Comparisons to Other Work}
%\todo[inline,backgroundcolor=red!50]{@Andrew: what do you think of this intro to our webpage? \\ 
%@Habiba: Given that I would like to delete the references to the website from $\psi$ paper, my inclination is not to introduce it here.\\
%The website needs to be redone, completely, it is not usable in its current form, it doesn't implement the method of the $\psi$ paper, let alone, this paper.}
In addition to providing very tight bounds for the error terms, this article also provides methods to convert results of a certain kind into another.
%\todo{Rewrote for 7}.
 For instance, this complements methods given in \cite{BKLNW21} and \cite{FKS} to obtain bounds for $\theta(x)$.
 % and all can be implemented directly as we do in WEBpage address here.

We note that our Corollary \ref{epsilon_asymp_pi_explicit} improves Platt and Trudgian's \cite[Corollary 2]{PlaTru21ET}:
\[ | \pi(x)-\li(x)| \leq 235 x\left(\log(x)\right)^{0.52} \exp(-0.8\sqrt{\log(x)} ) \ \text{for all}\ x \ge \exp(2\,000),\]
%The focus of the latter work is on asymptotic results. 
and the more recent asymptotic results of Johnston and Yang \cite[Corollary 1.3]{JohYan22}:
\[ | \pi(x)-\li(x)| \leq 9.59x\left(\log(x)\right)^{0.515} \exp(-0.8274\sqrt{\log(x)} ) \ \text{for all}\ x \ge 2. \]
Our  Corollary \ref{epsilon_asymp_pi_explicit} is both asymptotically and point-wise better than the above result.
However, by using the Korobov-Vinogradov's zero-free region, they also obtain \cite[Theorem 1.4]{JohYan22}:
\begin{equation}\label{bnd-JW} 
| \pi(x)-\li(x)| \leq 0.028x\left(\log(x)\right)^{0.801} \exp\left(-0.1853 \frac{\left(\log(x)\right)^{3/5}}{\left(\log \log(x)\right)^{1/5}} \right) \ \text{for all}\ x \ge 23. 
\end{equation}
This later result is asymptotically, and ultimately pointwise, better than ours.
For instance, comparing directly Corollary \ref{epsilon_asymp_pi_explicit} and \eqref{bnd-JW}, the latter becomes sharper for all $x\ge \exp(1.826\cdot 10^9)$ %1825775803.674572 \todo[backgroundcolor=red]{update me}
We note that both \cite{FKS} and \cite{JohYan22} provide more refined results for the admissible values $A_{\pi},B,C,x_0$ and the resulting numerical bound $\varepsilon_{\pi,num}(x_0)$ as $x_0$ gets larger. 
For instance, in \cite[Table 1]{JohYan22}, $x_0=\exp(100\,000)$ is the first displayed value where Johnston and Yang's numerical bound for $\pi$ becomes smaller than ours. Namely, they find $\varepsilon_{\pi,num}(x_0) \le 9.12 \cdot 10^{-111}$ when we find $\varepsilon_{\pi,num}(x_0) \le 7.79 \cdot 10^{-109}$.
%However, it only becomes better for very large values of $x$, for instance well past $\exp(10000)$.
Taken together, the results of this current article for smaller values $x$ and their results for larger values $x$ represent the current state of the art for explicit and unconditional bounds on the error terms in the prime number theorem.
Combining the different optimizations from \cite{JohYan22} with those from \cite{FKS}, along with those in the present article, is a natural future project. As it would require redoing all of the proofs of \cite{FKS}, it is as such a non-trivial undertaking outside the scope of this article.
%\todo{Added to sentence for 10}

\subsection{Our asymptotic bounds}

The asymptotic bounds we will study for $\psi$, $\theta$, and $\pi$ all take a specific form:
\begin{definition}\label{epsilon_asymp_theta_def}
With $R$ fixed, and for constants $A,B,C$, and for $\diamond$ one of $\psi$, $\theta$,  or $\pi$, we define
%\begin{equation}\label{bnd-asymp-E-eps}
%   \varepsilon_{\diamond, asymp, A,B,C}(x) =  A\left( \frac{\log(x)}{R} \right)^B \exp\left( -C \sqrt{\frac{\log(x)}{R}} \right).
%  \end{equation}
We say that $A,\,B,\,C,\,x_0$ give an admissible asymptotic bound for $\diamond$ if we have that, for all $x\ge x_0$,
 \begin{equation}
 \label{bnd-asymp-E-eps}
%\text{for all}\ x\ge x_0,\ 
E_{\diamond}(x)  \leq \varepsilon_{\diamond,asymp, A,B,C}(x), 
 \ \text{where}\    \varepsilon_{\diamond, asymp, A,B,C}(x) =  A\left( \frac{\log(x)}{R} \right)^B \exp\left( -C \sqrt{\frac{\log(x)}{R}} \right).
 \end{equation}
 For simplicity, when there is no confusion, we will use the notation  $\varepsilon_{\diamond,asymp}$ in place of $\varepsilon_{\diamond, asymp, A,B,C}$. 
%  Similarly we say that $ A_{\theta},\,B,\,C,\,x_0$ give an admissible asymptotic bound for $\theta$ if for all $x\ge x_0$ we have
% \[    E_{\theta}(x)  \leq \varepsilon_{\theta,asymp}(x) = \varepsilon_{asymp, A_{\theta},B,C}^{\theta}(x) \]
% Finally, we say  that $ A_{\pi},\,B,\,C,\,x_0$ give an admissible bound for $\pi$ if for all $x\ge x_0$ we have
%\[ E_{\pi}(x) \leq \varepsilon_{\pi,asymp}(x)=   \varepsilon_{asymp, A_{\pi},B,C}^{\pi}(x). \]
  \end{definition}
Note that we are comparing $|\pi(x)-\Li(x)|$ against $\frac{x}{\log x}$. One can obtain similar results with $\Li(x)$ however the latter seems to be more in use in applications.
 \begin{remark}
 Many admissible asymptotic bounds for $E_{\psi}$ have been proven.
For example by \cite[Corollary 1.3]{FKS}, when using $R=5.5666305$, one has an admissible asymptotic bound \eqref{bnd-asymp-E-eps} for $E_{\psi}(x)$ with 
\[  A_{\psi}= 121.096, \ B = 3/2 ,\ C = 2,\ \text{and}\  x_0 =2.\]
 Formulas from \cite{FKS} readily allow one to 
 compute admissible values of $x_0$ and $ A_{\psi}$, with $B = 3/2$, and $C = 2$ for any other fixed $R < 5.5666305$.
 \end{remark}
The first main result of this paper is an explicit method to convert admissible asymptotic bounds for $\theta$ (or $\psi$ by also using Proposition \ref{epsilon_asymp_theta_prop}) into bounds for $\pi$.
\begin{theorem}\label{prop_asym_pi}
Let $B \geq \max(\frac{3}{2}, 1+\frac{C^2}{16R})$. 
Let $x_0>0$ verifying $\pi(x_0)$ and $\theta(x_0)$ are computable. 
Assume $ A_{\theta},B,C,x_0$ give an admissible asymptotic bound \eqref{bnd-asymp-E-eps} for $E_{\theta}(x)$, and let \begin{equation}
\label{def-x1}
x_1 \geq \max\left\{x_0, \exp\Big(\big(1+\frac{C}{2\sqrt{R}}\big)^2\Big)\right\}.
\end{equation}
We have 
\begin{equation}
\label{epsilon_pi_asymp_def}
E_{\pi}(x)\leq \varepsilon_{\pi,asymp}(x) = \varepsilon_{\theta,asymp}(x)\left( 1 + \mu_{asymp}(x_0, x_1) \right),\ \text{for all}\ x\ge x_1,
\end{equation}
where
\begin{multline}
 \label{mu_asymp_def}
\mu_{asymp}(x_0, x_1)
\\    =   \frac{x_0\log(x_1)}{\varepsilon_{\theta,asymp}(x_1)x_1\log(x_0)}  \left| \frac{\pi(x_0) - \Li(x_0) }{ x_0 /\log(x_0)}
    -  \frac{\theta(x_0) - x_0}{x_0} \right| 
 + \frac{2 D_{+}\big( \sqrt{\log(x_1)} - \frac{C}{2\sqrt{R}} \big)}{\sqrt{\log(x_1)}},
\end{multline}
%\footnote{using here \[
%\frac{\log(x_1)}{\varepsilon_{\theta,asymp}(x_1) x_1} \left| \pi(x_0) - \Li(x_0) 
%    -  \frac{\theta(x_0) - x_0}{\log(x_0)} \right| 
%=
%\frac1{\varepsilon_{\theta,asymp}(x_1)}  \frac{\log(x_1)}{x_1} \frac{x_0}{\log(x_0)} \left| \frac{\pi(x_0) - \Li(x_0) }{ x_0 /\log(x_0)}
%    -  \frac{\theta(x_0) - x_0}{x_0} \right| 
%\]}
and $D_{+}(x)$ is the Dawson function defined in \eqref{Dawson}.
Defining
$%\begin{equation}\label{A_pi_asymp_def}
A_{\pi}=\left( 1 + \mu_{asymp}(x_0, x_1) \right) A_{\theta},
$ %\end{equation}
we have that $ A_{\pi},B,C,x_1$ give an admissible asymptotic bound \eqref{bnd-asymp-E-eps} for $E_{\pi}(x)$.
\end{theorem}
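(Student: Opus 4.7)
The strategy is classical partial summation. Writing $\pi(x)=\sum_{p\le x}1$ and applying Abel summation against $\theta$, combined with the integration-by-parts identity $\Li(x)=\frac{x}{\log x}-\frac{2}{\log 2}+\int_2^x\frac{dt}{\log^2 t}$, yields, for $x\ge x_0$, the key identity
\[
\pi(x)-\Li(x)=\bigl[\pi(x_0)-\Li(x_0)\bigr]+\frac{\theta(x)-x}{\log x}-\frac{\theta(x_0)-x_0}{\log x_0}+\int_{x_0}^x\frac{\theta(t)-t}{t\log^2 t}\,dt.
\]
Dividing by $x/\log x$, taking absolute values, and using $|\theta(t)-t|\le t\,\varepsilon_{\theta,asymp}(t)$ for $t\ge x_0$ decomposes $E_\pi(x)$ into three contributions. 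The first, $E_\theta(x)\le\varepsilon_{\theta,asymp}(x)$, accounts for the ``$1$'' in $(1+\mu_{asymp})$. It remains to show that the boundary piece depending only on $x_0$, and the integral piece, each divided by $\varepsilon_{\theta,asymp}(x)$, are bounded for $x\ge x_1$ by the corresponding summands of $\mu_{asymp}(x_0,x_1)$.

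For the integral, the substitutions $u=\sqrt{\log t}$ and then $v=u-C/(2\sqrt{R})$ complete the square in the exponent: $\log t-C\sqrt{\log t/R}=v^2-C^2/(4R)$. Up to explicit constants the integrand becomes $(v+C/(2\sqrt{R}))^{2B-3}e^{v^2}$. The hypothesis $B\ge 3/2$ makes this power non-decreasing in $v$, allowing it to be pulled out at the upper limit $v_x=\sqrt{\log x}-C/(2\sqrt{R})$, after which the defining identity $\int_0^z e^{t^2}\,dt=e^{z^2}D_{+}(z)$ controls the remaining Gaussian integral. Collecting constants produces
\[
\frac{\log x}{x}\int_{x_0}^x\frac{\varepsilon_{\theta,asymp}(t)}{\log^2 t}\,dt\le\varepsilon_{\theta,asymp}(x)\cdot\frac{2\,D_{+}\bigl(\sqrt{\log x}-C/(2\sqrt{R})\bigr)}{\sqrt{\log x}}.
\]
The condition $x_1\ge\exp\bigl((1+C/(2\sqrt{R}))^2\bigr)$ forces $\sqrt{\log x}-C/(2\sqrt{R})\ge 1$ for $x\ge x_1$, which is past the unique maximum of $D_{+}$ at $\approx 0.924$; since $1/\sqrt{\log x}$ is also decreasing, the right-hand side is non-increasing on $[x_1,\infty)$ and therefore bounded by its value at $x_1$, yielding the Dawson summand of $\mu_{asymp}$.

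For the boundary piece, I need $\log(x)/(x\,\varepsilon_{\theta,asymp}(x))$ to be non-increasing on $[x_1,\infty)$. Writing $y=\log x$ and $u=\sqrt{y}$, a direct logarithmic derivative calculation reduces this requirement to the quadratic inequality $u^2-(C/(2\sqrt{R}))u+(B-1)\ge 0$ for $u\ge 1+C/(2\sqrt{R})$; its discriminant in $u$ is non-positive precisely when $B\ge 1+C^2/(16R)$, which is the second hypothesis. With monotonicity in hand, one evaluates the ratio at $x=x_1$ and, after rewriting the combination $\pi(x_0)-\Li(x_0)-(\theta(x_0)-x_0)/\log x_0$ in terms of the normalized differences $[\pi(x_0)-\Li(x_0)]/(x_0/\log x_0)$ and $[\theta(x_0)-x_0]/x_0$, recovers exactly the first summand of $\mu_{asymp}(x_0,x_1)$. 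Combining the three estimates gives $E_\pi(x)\le\varepsilon_{\theta,asymp}(x)(1+\mu_{asymp}(x_0,x_1))$, and the admissibility of $(A_\pi,B,C,x_1)$ follows at once. The main technical difficulty is twofold: executing the change of variables cleanly so that the powers of $\log x$ cancel to leave the precise factor $2\,D_{+}(\cdot)/\sqrt{\log x}$, and recognizing the threshold $B\ge 1+C^2/(16R)$ as the exact discriminant condition that governs monotonicity of the boundary coefficient.
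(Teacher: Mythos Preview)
Your proposal is correct and follows essentially the same approach as the paper's proof: the same Abel/Stieltjes identity, the same substitution $u=\sqrt{\log t}$ followed by completing the square to reach the Dawson function (this is the paper's Lemma~\ref{integral_asymp_bound}), and the same discriminant argument for the monotonicity of $\log(x)/(x\,\varepsilon_{\theta,asymp}(x))$ (the paper's Corollary~\ref{decreasing_func_corollary}). The only difference is organizational: the paper isolates the integral bound and the monotonicity fact as separate lemmas, whereas you inline both computations.
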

We refer the reader to Section \ref{section:asymptotic_bounds} for proofs, to Proposition \ref{epsilon_asymp_theta_prop} for a formula to relate $\varepsilon_{\theta,asymp}(x)$ with $\varepsilon_{\psi,asymp}(x)$, and to Section \ref{Section-Cor} %Corollaries \ref{asymp-bnd-psi2pi}, \ref{epsilon_asymp_pi_explicit}, \ref{cor:alt1}, \ref{cor:alt2}-\ref{cor:weak}
 for Corollaries illustrating the use of the Theorem.
%\footnote{Should combine with Proposition 3:
%\[\varepsilon_{\theta,asymp}(x) = \varepsilon_{\psi,asymp}(x)(1 + \nu_{asymp}(x_0))
%\]}
Note that one can still obtain a formula for admissible asymptotic bounds on $\pi$ while removing the restriction imposed on $B$ (see Remark \ref{rem-pi2theta}).
\begin{remark}
% One can still obtain a formula for admissible asymptotic bounds on $\pi$ while removing the restriction imposed on $B$.
% For the purposes of clarity in both the statement and proof of Theorem \ref{prop_asym_pi}, we have chosen to limit our study to only those admissible asymptotic bounds on $\theta$ with a satisfactory $B$.
% We refer the reader to the proof in Section \ref{section:asymptotic_bounds} for details.
%% The main difference is that in the proof, when considering \eqref{main_proof_step_1}, one will use a different bound for $m(x_0,x)$, and hence obtain a different final formula.
%\end{remark}\footnote{this remark is repeated at end of proof in section 2}
%\todo[inline]{xxclarify remark}\begin{remark}
It is important to note that Theorem \ref{prop_asym_pi} gives a bound for $\pi(x)$ for $x\ge x_1$, but that the assumption is on $\theta(x)$ for $x\ge x_0$, where $x_0$ and $x_1$ do not necessarily coincide. 
%$ A_{\theta},\,B,\,C,\,x_0$  must have been admissible and i
Also, it is useful if $\pi(x_0)$ and $\theta(x_0)$ are computable so that we can evaluate the formula $  \pi(x_0) - \Li(x_0) - \frac{\theta(x_0) - x_0}{\log(x_0)} $.
%\[  \left| \pi(x_0) - \Li(x_0) - \frac{\theta(x_0) - x_0}{\log(x_0)} \right| . \]
We note that 
\[ \frac{\pi(x_0) - \Li(x_0) }{ x_0 /\log(x_0)}
    -  \frac{\theta(x_0) - x_0}{x_0} = 0 \ \text{for}\ x_0=40.787732519\ldots.\]
%\[   \pi(x_0) - \Li(x_0) - \frac{\theta(x_0) - x_0}{\log(x_0)} = 0 \ \text{for}\ x_0=40.787732519\ldots.\]
This simplifies the result for bounds that are admissible with this $x_0$.
However, because admissible values for $ A_{\theta}$ tend to decrease with $x_0$, and because Theorem \ref{prop_asym_pi} requires the value of $ A_{\theta}$ be admissible at $x_0$ it can still be useful in this theorem to estimate this quantity for larger $x_0$.
To this end the work of Stable \cite{Staple} gives many values of $\pi(x_0)$ whereas the work of Dusart  \cite{Dusart} gives many values for $\theta(x_0)$.
For example for $x_0=10^{15}$ we have 
\begin{align*}
  \theta(10^{15}) &= 999999965752660.939840\ldots,\\
  \pi(10^{15}) &= 29844570422669, \text{ and}\\
  \Li(10^{15}) &= 29844571475286.535901\ldots
  \end{align*}
and thus find  
%\todo{For 14 Not sure what was being complained about, possible the \ldots is being misinterpreted?}
\[ \frac{\pi(x_0) - \Li(x_0) }{ x_0 /\log(x_0)}
    -  \frac{\theta(x_0) - x_0}{x_0} = (-2.1087826\ldots )\cdot 10^{-9}. \]  % -2.10878264169085364438320\cdot 10^{-9}
%\[    \pi(x_0) - \Li(x_0) - \frac{\theta(x_0) - x_0}{\log(x_0)} =  -61055.510987\ldots \] %-61055.510987980001410706713803768484628
This specific computation will be used when we produce numerical results as displayed in Table \ref{tab:pi} using Theorem \ref{prop_num_pi}.

To apply Theorem \ref{prop_asym_pi} with a value of $ A_{\theta}$ which is only admissible for very large $x_0$, for instance those from \cite[Table 6]{FKS} or \cite[Table 1]{JohYan22},
 one may bound  $ \frac{\pi(x_0) - \Li(x_0) }{ x_0 /\log(x_0)}   -  \frac{\theta(x_0) - x_0}{x_0}$ by using the numerical estimates on $\pi(x_0)-\Li(x_0)$ and $\theta(x_0)-x_0$ provided in Tables \ref{table:theta-small}, \ref{table:theta-med}, \ref{table:theta}, and \ref{tab:pi}. 
To illustrate how this is done notice that \cite[Table 1]{JohYan22} gives 
\[A_{\theta}=23.14,\,B=1.503,\,C=2.0429\ldots,\, x_0=e^{10^5}\] are admissible asymptotic bounds for $\theta$. 
From Table \ref{table:theta} we have, with  $x_0=e^{10^5}$ that $\left|{\frac{\theta(x_0) - x_0}{x_0}}\right| \leq  7.7824\cdot 10^{-109}$ and from Table \ref{tab:pi} that $ \left|{\frac{\pi(x_0) - \Li(x_0) }{ x_0 /\log(x_0)}}\right| \leq  7.7825\cdot 10^{-109}$.
We deduce that  $ \left|{ \frac{\pi(x_0) - \Li(x_0) }{ x_0 /\log(x_0)} -  \frac{\theta(x_0) - x_0}{x_0}}\right| \leq 1.56849\cdot 10^{-108 } $ and hence, for the above $A_\theta,\,B,\,C$ we have $\mu_{asymp}(e^{10^5},e^{10^5}) \leq  2252.31$ so that
\[ A_{\pi}=  52141.6,\; B=1.503,\,C=2.0429\ldots, \ x_1=e^{10^5}\]
are admissible asymptotic bounds for $\pi(x)$ for $x \geq x_1=e^{10^5}$.
Note that we can improve $\mu_{asymp}$, and consequently $A_{\pi}$, at the cost of increasing $x_1$, for example $\mu_{asymp}(e^{10^5},e^{100016}) \leq 2.66336 \cdot 10^{-4}$ so that
\[A_{\pi}=23.15,\; B=1.503,\,C=2.0429\ldots,\, x_1=e^{100016}\] are admissible asymptotic bounds for $\pi(x)$ for $x \geq x_1=e^{100016}$. 
%The need to increase $x_1$ in this case is because our numerical bounds from Table $\ref{tab:pi}$ are worse than what could have been proven using numerical versions of  \cite{JohYan22}.
\end{remark}

\subsection{Our numerical bounds}
We set up our notation:
\begin{definition}\label{epsilon_medium_theta_def}
Let $x_0 > 2$. We say a (step) function $\varepsilon_{\diamond,num}(x_0)$ gives an admissible numerical bound for $E_{\diamond}(x)$ if
 \begin{equation}
 \label{bnd-num-E-eps}
 E_{\diamond}(x)  \leq \varepsilon_{\diamond,num}(x_0), \ \text{for all}\ x\ge x_0.
 \end{equation}
\end{definition}

The second main result of this paper is an explicit method to convert admissible numerical bounds for $\theta$ (or $\psi$ by also using Proposition \ref{prop:numpsitotheta})  into bounds for $\pi$.
\begin{theorem}\label{prop_num_pi}
Let $x_0>0$ be chosen such that $\pi(x_0)$ and $\theta(x_0)$ are computable, and let $x_1 \ge {\rm \max}(x_0,14)$. 
Let $\{ b_i \}_{i=1}^{N}$ be a finite partition of the interval $[\log(x_0),\log(x_1)]$, with $b_1=\log(x_0)$ and $b_N=\log(x_1)$,  and suppose that $\varepsilon_{\theta,num}(x)$ gives computable admissible numerical bounds for $x=\exp(b_i)$, for each $i=1,\ldots ,N$.

For $x_1 \leq x_2\leq x_1\log(x_1)$, we define
\begin{equation}\label{mu_num_defx1x2}
\begin{split}
    \mu_{num}(x_0, x_1,x_2) 
    & = \frac{x_0\log(x_1)}{\varepsilon_{\theta,num}(x_1)x_1\log(x_0)}  \left| \frac{\pi(x_0) - \Li(x_0) }{ x_0 /\log(x_0)}
    -  \frac{\theta(x_0) - x_0}{x_0} \right|\\ 
    & \quad + \frac{\log(x_1)}{\varepsilon_{\theta,num}(x_1)x_1}\sum_{i=1}^{N-1} \varepsilon_{\theta,num}(e^{b_i})\left( \Li(e^{b_{i+1}}) - \Li(e^{b_i}) + \frac{e^{b_i}}{b_i} - \frac{e^{b_{i+1}}}{b_{i+1}} \right) \\
   & \quad + \frac{\log(x_2)}{x_2} \left(  \Li(x_2) - \frac{x_2}{\log(x_2)}  - \Li(x_1) +\frac{x_1}{\log(x_1)} \right),
\end{split}
\end{equation}
and, for $x_2>x_1(\log(x_1))$, including $x_2=\infty$, we define
\begin{equation}\label{mu_num_def}
\begin{split}
    \mu_{num}(x_0, x_1,x_2) 
    & = \frac{x_0\log(x_1)}{\varepsilon_{\theta,num}(x_1)x_1\log(x_0)}  \left| \frac{\pi(x_0) - \Li(x_0) }{ x_0 /\log(x_0)}
    -  \frac{\theta(x_0) - x_0}{x_0} \right|\\ 
%&\frac{\log(x_1)}{x_1\varepsilon_{\theta,num}(x_1)} \left| \pi(x_0) - \Li(x_0) - \frac{\theta(x_0) - x_0}{\log(x_0)} \right| \\
    & \quad + \frac{\log(x_1)}{\varepsilon_{\theta,num}(x_1)x_1}\sum_{i=1}^{N-1} \varepsilon_{\theta,num}(e^{b_i})\left( \Li(e^{b_{i+1}}) - \Li(e^{b_i}) + \frac{e^{b_i}}{b_i} - \frac{e^{b_{i+1}}}{b_{i+1}} \right) \\
    & \quad + \frac{1}{\log(x_1) + \log(\log(x_1))-1}.    
%    & \quad + \frac{\log(x_1)}{x_1} + \frac{\log(x_1+\log(x_1))}{x_1\log(x_1)} .
%    & \quad + \frac{\log(x_1)\Li(x_1)}{x_1} - 1 - \frac{\log(x_2)}{x_2}\left( \Li(x_1) - \frac{x_1}{\log(x_1)} \right).
%\\& =     \mu_{num}(x_0, x_1, x_1) +   \frac{1}{\log(x_1) + \log(\log(x_1))}.    
\end{split}
\end{equation}
Then for $x_1\leq x \leq x_2$ we have
\begin{equation}\label{epsilon_pi_mediumx1x2}
E_{\pi}(x) \leq \varepsilon_{\pi,num}(x_1,x_2) = \varepsilon_{\theta,num}(x_1)(1 + \mu_{num}(x_0,x_1,x_2)).
\end{equation}
\end{theorem}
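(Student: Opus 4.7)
The plan is to use Stieltjes integration to relate $\pi(x)$ to $\theta(t)$, then bound the resulting integral piecewise using the partition $\{b_i\}$ on $[x_0,x_1]$ and a uniform estimate on $[x_1,x]$, with the tail integral handled by an explicit calculus argument. Integrating $\pi(x) - \pi(x_0) = \int_{x_0^+}^x d\theta(t)/\log t$ by parts, combining with the parallel identity $\Li(x) - \Li(x_0) = x/\log x - x_0/\log x_0 + \int_{x_0}^x dt/\log^2 t$, and subtracting yields
\[
\pi(x) - \Li(x) = \Big[\pi(x_0) - \Li(x_0) - \frac{\theta(x_0)-x_0}{\log x_0}\Big] + \frac{\theta(x)-x}{\log x} + \int_{x_0}^{x} \frac{\theta(t)-t}{t\log^2 t}\, dt.
\]
Dividing by $x/\log x$, taking absolute values, and using $\log x/x \le \log x_1/x_1$ (valid since $u\mapsto \log u/u$ is decreasing on $[e,\infty)$ and $x\ge x_1\ge 14$), the bracketed boundary term contributes the leading summand of $\mu_{num}$ once one factors out $\varepsilon_{\theta,num}(x_1)$, while the middle term is bounded by $E_\theta(x)\le\varepsilon_{\theta,num}(x_1)$, accounting for the $+1$ inside $(1+\mu_{num})$.

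I split the remaining integral at $x_1$. On $[x_0,x_1]$ I refine via the partition: on each $[e^{b_i},e^{b_{i+1}}]$ the step bound $|\theta(t)-t|/t\le \varepsilon_{\theta,num}(e^{b_i})$ combined with the antiderivative identity $\int_a^b dt/\log^2 t = \Li(b)-\Li(a) + a/\log a - b/\log b$ yields, after the same $\log x/x\le \log x_1/x_1$ reduction, exactly the middle summand of $\mu_{num}$. On $[x_1, x]$ the uniform bound $|\theta(t)-t|/t\le\varepsilon_{\theta,num}(x_1)$ reduces the remaining task to bounding
\[
h(x) := \frac{\log x}{x}\int_{x_1}^x \frac{dt}{\log^2 t} = \frac{\log x}{x}\Big(\Li(x)-\Li(x_1) - \frac{x}{\log x} + \frac{x_1}{\log x_1}\Big)
\]
for $x_1\le x\le x_2$.

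A direct calculation gives $h'(x) = (1-\log x)x^{-2}\int_{x_1}^x dt/\log^2 t + (x\log x)^{-1}$, which vanishes precisely when $\int_{x_1}^x dt/\log^2 t = x/(\log x (\log x - 1))$, and at any such critical point $x^*$ one finds the clean identity $h(x^*) = 1/(\log x^* - 1)$. Since $h(x_1) = 0$, $h(x)\to 0$ as $x\to\infty$, and $h\ge 0$, the function attains its maximum at a unique interior $x^*$. An elementary asymptotic comparison of $\Li(x)-x/\log x$ with $x/(\log x(\log x - 1))$ (both being $x/\log^2 x + O(x/\log^3 x)$, but differing at second order) places $x^* \ge x_1\log x_1$, so that $\log x^* - 1 \ge \log x_1 + \log\log x_1 - 1 > 0$ using $x_1\ge 14$. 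Hence if $x_1\le x_2\le x_1\log x_1$ then $h$ is monotone increasing on $[x_1,x_2]$ and $h(x)\le h(x_2)$, producing the last line of \eqref{mu_num_defx1x2}; otherwise $h(x) \le h(x^*) \le 1/(\log x_1 + \log\log x_1 - 1)$, producing the last line of \eqref{mu_num_def}. Assembling the three contributions and factoring out $\varepsilon_{\theta,num}(x_1)$ yields $E_\pi(x) \le \varepsilon_{\theta,num}(x_1)(1+\mu_{num}(x_0,x_1,x_2))$. The main technical obstacle is the analysis of $h$: because $\Li(x)-x/\log x$ and $x/(\log x(\log x-1))$ agree to leading order, localizing $x^*$ and verifying the uniform tail bound requires careful second-order control, and this is where the hypothesis $x_1\ge 14$ enters.
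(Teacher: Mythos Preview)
Your overall strategy matches the paper's proof exactly: the same Stieltjes identity, the same splitting of the integral at $x_1$, the same piecewise bound on $[x_0,x_1]$ via the antiderivative identity for $\int dt/\log^2 t$, and the same reduction of the tail to the function $h(x)=\frac{\log x}{x}\int_{x_1}^x dt/\log^2 t$ with the observation that $h(x^*)=1/(\log x^*-1)$ at any critical point.

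The gaps are precisely where you flag the ``main technical obstacle,'' and you do not actually close them. First, the conditions $h(x_1)=0$, $h\to 0$, $h\ge 0$ do \emph{not} imply that the interior maximum is unique, nor that $h$ is monotone on $[x_1,x_1\log x_1]$; a nonnegative function vanishing at both ends can oscillate. Second, the phrase ``an elementary asymptotic comparison of $\Li(x)-x/\log x$ with $x/(\log x(\log x-1))$ \ldots\ differing at second order places $x^*\ge x_1\log x_1$'' is not a proof: asymptotic matching at large $x$ says nothing about where the critical point sits relative to the finite threshold $x_1\log x_1$.

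The paper handles both points without any second--order expansion. For uniqueness it writes $h'(x)=f_1(x)/x^2$ with $f_1(x)=\frac{x}{\log x}-(\log x-1)\int_{x_1}^x \frac{dt}{\log^2 t}$ and observes $f_1'(x)=-\frac{1}{x}\int_{x_1}^x\frac{dt}{\log^2 t}<0$, so $f_1$ is strictly decreasing and $h'$ changes sign exactly once; thus $h$ is unimodal. For the location of the critical point it simply checks the sign of $h'$ at $x=x_1\log x_1$: after two integrations by parts on $\int_{x_1}^x dt/\log^2 t$ and the crude bound $\int_{x_1}^x 6\,dt/\log^4 t<6(x-x_1)/\log^4 x_1$, one verifies directly that $h'(x_1\log x_1)>0$ for $x_1>e$. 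Unimodality plus this sign then forces $x^*>x_1\log x_1$, giving both the monotonicity on $[x_1,x_1\log x_1]$ needed for \eqref{mu_num_defx1x2} and the bound $h(x)\le 1/(\log(x_1\log x_1)-1)$ needed for \eqref{mu_num_def}. Replace your asymptotic hand--wave with this monotonicity--plus--sign--check argument and the proof is complete.
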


We refer the reader to Section \ref{section:numeric_bounds} for a proof of Theorem \ref{prop_num_pi}, together with a formula relating $\varepsilon_{\psi,num}$ to $\varepsilon_{\theta,num}$ (see Proposition \ref{prop:numpsitotheta}). 
The reader will also find several simpler forms of bounds for $\pi(x)$ in Corollaries \ref{epsilon_asymp_pi_explicit}, \ref{cor:alt1}, \ref{cor:alt2}, and \ref{cor:weak}. 

\begin{remark}\label{rem:consecutive}
From the proof we will find that the bound from \eqref{mu_num_def} is in fact applicable for all $x>x_1$ and that the bound \eqref{mu_num_defx1x2} is valid. In addition it is strictly better than  \eqref{mu_num_def}, provided
\[  \frac{d}{dx}\frac{\log(x)}{x} \left(  \Li(x) - \frac{x}{\log(x)}  - \Li(x_1) +\frac{x_1}{\log(x_1)} \right) \Bigg|_{x=x_2} > 0.\]
So although \eqref{mu_num_def} is easier to use directly, \eqref{mu_num_defx1x2} allows for strictly better bounds.
For instance, we have $\mu_{num}(10^{15},e^{100},\infty)=0.0197\ldots$, while $\mu_{num}(10^{15},e^{100},e^{101})=0.0165\ldots$ and $\mu_{num}(10^{15},e^{100},e^{100.1})= 0.0111\ldots$.
One can recover bounds valid for all $x>x_1$ from the bounds \eqref{mu_num_defx1x2}  by considering the maximum of $\varepsilon_{\psi,num}(x_i,x_{i+1})$ over consecutive intervals. This idea ultimately provides better results than  \eqref{mu_num_def}, and is made more precise in the following corollary.
\end{remark}
\begin{corollary}\label{cor_num_pi}
Consider a set $\mathcal{B'}=\{b_i'\}_{i=1}^{M}$ of finite subdivisions of $[\log(x_1),\infty)$, with $b_1'=\log(x_1)$ and $b_{M}'=\infty$.
We define
\begin{equation}\label{def-eps0B}
\varepsilon_{\pi,num}(x_1) = \max_{1\le i\le M-1} \left\{\varepsilon_{\pi,num}(\exp(b_i'),\exp( b_{i+1}')) \right\} 
\end{equation}
where $\varepsilon_{\pi,num}(\exp(b_i'),\exp( b_{i+1}'))$ is given by \eqref{epsilon_pi_mediumx1x2}.
Then, 
\begin{equation}
\label{epsilon_pi_num_def}
E_{\pi}(x) \leq \varepsilon_{\pi,num}(x_1) ,\ \text{for all}\  x \ge x_1.
\end{equation}
\end{corollary}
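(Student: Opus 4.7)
The plan is to reduce the statement to a pointwise application of Theorem \ref{prop_num_pi} on each sub-interval of the partition $\mathcal{B'}$, then take the maximum over the finitely many pieces.

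First, for any $x \ge x_1$, since the $\{b_i'\}_{i=1}^M$ partition $[\log(x_1), \infty)$ with $b_1' = \log(x_1)$ and $b_M' = \infty$, there exists a unique $i \in \{1,\ldots, M-1\}$ with $\log(x) \in [b_i', b_{i+1}')$, i.e., $x \in [\exp(b_i'), \exp(b_{i+1}'))$. Apply Theorem \ref{prop_num_pi} with its parameters $(x_1, x_2)$ replaced by $(\exp(b_i'), \exp(b_{i+1}'))$: the required hypothesis $\exp(b_i') \ge \max(x_0, 14)$ holds because $\exp(b_1') = x_1 \ge \max(x_0, 14)$ and the sequence $\{b_i'\}$ is increasing; admissibility of $\varepsilon_{\theta,num}$ at $\exp(b_i')$ is inherited from the setup of Theorem \ref{prop_num_pi} since the subdivision used internally for each sub-interval can be taken from the original $\{b_i\}$. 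Depending on whether $\exp(b_{i+1}') \le \exp(b_i')\log(\exp(b_i'))$ or not, we invoke formula \eqref{mu_num_defx1x2} or \eqref{mu_num_def}; for the terminal interval where $b_M' = \infty$, we use \eqref{mu_num_def} as noted in Remark \ref{rem:consecutive}.

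This gives
\[
E_{\pi}(x) \le \varepsilon_{\pi,num}(\exp(b_i'), \exp(b_{i+1}')),
\]
and since $i$ depends on $x$, taking the maximum over $i \in \{1, \ldots, M-1\}$ produces a bound independent of $x$, namely $\varepsilon_{\pi,num}(x_1)$ as defined in \eqref{def-eps0B}. This establishes \eqref{epsilon_pi_num_def}.

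Since each sub-interval case follows immediately from Theorem \ref{prop_num_pi}, the proof is essentially a covering argument and no genuine analytic difficulty arises. The only point requiring a bit of care is the bookkeeping when the sub-interval $[\exp(b_i'), \exp(b_{i+1}')]$ satisfies $\exp(b_{i+1}') > \exp(b_i')\log(\exp(b_i'))$, where one must confirm that the formula \eqref{mu_num_def} (which nominally pertains to all $x > x_1$) gives a valid bound on the sub-interval as well — this is exactly the content of the first sentence of Remark \ref{rem:consecutive}.
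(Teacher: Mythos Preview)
Your proposal is correct and matches the paper's intended argument: the corollary is stated without proof precisely because it is an immediate covering argument over the partition $\{b_i'\}$, applying Theorem \ref{prop_num_pi} on each sub-interval and then taking the maximum, exactly as you describe. Your attention to the case $\exp(b_{i+1}') > \exp(b_i')\log(\exp(b_i'))$ (and the terminal interval $b_M'=\infty$) via Remark \ref{rem:consecutive} is appropriate and is the only point needing any justification.
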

We refer the reader to Table \ref{tab:pi} where we provide a sample of new numerical values arising from Corollary \ref{cor_num_pi} and Theorem \ref{prop_num_pi}.
%\todo[inline]{reference to tables? for $\varepsilon_{\pi,num}(x_0, x_1)$ and $\mu_{num}(x_0, x_1)$, where is Table 7?}
%
\begin{remark}
Note that because both $ \varepsilon_{\theta,num}(\exp(b_i'))$ and $\mu_{num}(x_0,\exp(b_i'),\exp(b_{i+1}'))$ tend to decrease as $i$ increases, in practice the maximum will typically come from the ``first" interval, $[\exp(b_{1}'),\exp(b_{2}')]$. 
Moreover, because the last term in \eqref{mu_num_defx1x2}
\[\frac{ b_{i+1}'}{\exp(b_{i+1}')} \left(  \Li(\exp(b_{i+1}')) - \frac{\exp(b_{i+1}')}{ b_{i+1}' }  - \Li(\exp(b_{i}')) +\frac{\exp(b_{i}')}{ b_{i}'} \right)\]
becomes smaller when $b_{i+1}'$ is close to $b_{i}'$, it is advantageous to take small intervals.
\end{remark}

\subsection{Acknowledgements}
The authors thank the referee for their careful reading of our manuscript.

\section{An Explicit Version of the Asymptotic Bound for $\pi(x)$}\label{section:asymptotic_bounds}

The shape of the error term function can be defined as 
\begin{equation}\label{def-g}
 g(a,b,c,x) = x^{-a}(\log(x))^b \exp(c\sqrt{\log(x)}) ,
 \end{equation}
    for some real numbers $a,b,c$ with $a\ge 0$ and $c>0$.
Lemma \ref{decreasing_func} gives some facts about the behaviour of this function. 

The first step to prove Theorem \ref{prop_asym_pi} is to use the identity derived from the Stieljtes integral (see \cite[Equation (4.17)]{RosserSchoenfeld1}) to connect $\pi(x)$ to $\theta(x)$:
\begin{equation}\label{Stieltjes}
    \left(\pi(x) - \Li(x)\right) - \left(\pi(x_0) - \Li(x_0)\right) = \frac{\theta(x)-x}{\log(x)} - \frac{\theta(x_0)-x_0}{\log(x_0)} + \int_{x_0}^{x} \frac{\theta(t)-t}{t(\log(t))^2}dt.
\end{equation}

We will also need some estimates for $\psi(x)$ to inform those for $\theta(x)$ (see Proposition \ref{epsilon_asymp_theta_prop}).

The main difference between our result for admissible asymptotic bounds on $\pi$ and admissible numerical bounds on $\pi$ (as will be described in Section \ref{section:numeric_bounds}) is how we will estimate the integral
\begin{equation}\label{important_integral}
    \int_{x_0}^{x} \left| \frac{\theta(t)-t}{t(\log(t))^2} \right|dt.
\end{equation} 

\subsection{Preliminary Lemmas}
\begin{lemma}\label{decreasing_func}
Let $a,b,c$ be any real numbers, with $a\ge 0$ and $c > 0$. 
We recall the function $g(a,b,c,x) $ as defined in \eqref{def-g}:
\[ g(a,b,c,x) = x^{-a}(\log(x))^b \exp(c\sqrt{\log(x)}). \]
    \begin{itemize}
\item If $a > 0$ and $b < -\frac{c^2}{16a}$, then $g(a,b,c,x) $ decreases with $x$, 
for all $x$.
\item If $a > 0$ and $b \ge -\frac{c^2}{16a}$, then $g(a,b,c,x) $ decreases as a function of $x$, 
for all $x$ with $x> \exp\big( \big(\frac{c}{4a} + \frac1{2a}\sqrt{\frac{c^2}{4}+4ab} \big)^2\big)$. 
    %such that $ \sqrt{\log(x)} > \frac{\frac{c}{2} + \sqrt{\frac{c^2}{4}+4ab}}{2a}$    Otherwise $g(a,b,c,x)$ is decreasing for all $x$ such that $ \sqrt{\log(x)} > \frac{\frac{c}{2} + \sqrt{\frac{c^2}{4}+4ab}}{2a}$. 
\item If $a=0$, then $g(0,b,c,x) $ decreases with $x$, for all $\sqrt{\log(x)}>-\frac{2b}c$.
\end{itemize}
\end{lemma}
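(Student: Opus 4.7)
The plan is to compute the logarithmic derivative of $g$ and reduce the question of its sign to analyzing a quadratic polynomial in $y=\sqrt{\log x}$.

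Since $g>0$ for $x>1$, the function $g(a,b,c,x)$ decreases in $x$ precisely when $(\log g)'(x)<0$. Writing $\log g(a,b,c,x) = -a\log x + b\log\log x + c\sqrt{\log x}$ and differentiating gives
\[
\frac{g'(x)}{g(x)} = \frac{1}{x}\left(-a + \frac{b}{\log x} + \frac{c}{2\sqrt{\log x}}\right).
\]
Setting $y=\sqrt{\log x}>0$ and multiplying through by $y^2/x>0$, the condition $g'(x)<0$ is equivalent to
\[
P(y) := a y^2 - \tfrac{c}{2}\, y - b > 0.
\]
So the entire statement reduces to determining when this quadratic (or linear, if $a=0$) expression in $y$ is positive on the relevant range of $y$.

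Next I would handle the three cases separately. When $a>0$, $P(y)$ is an upward-opening quadratic with discriminant $\Delta = \tfrac{c^2}{4}+4ab$. If $b<-\tfrac{c^2}{16a}$, then $\Delta<0$ and $P(y)>0$ for all $y$, hence $g$ decreases for all $x>1$ (and trivially elsewhere since $g$ is not defined or not relevant for $x\le 1$). If $b\ge -\tfrac{c^2}{16a}$, the quadratic formula yields the larger root
\[
y_+ = \frac{c}{4a} + \frac{1}{2a}\sqrt{\tfrac{c^2}{4}+4ab},
\]
and since the smaller root is $\le y_+$ we have $P(y)>0$ exactly for $y>y_+$, i.e. $\sqrt{\log x}>y_+$, which rearranges to the stated bound $x>\exp\bigl(y_+^2\bigr)$. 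Finally, when $a=0$ the inequality becomes the linear condition $-\tfrac{c}{2}y - b>0$, and since $c>0$ this simplifies to $y>-2b/c$, exactly as claimed.

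I don't anticipate a real obstacle here: the argument is a direct computation followed by elementary root analysis of a quadratic. The only mild care needed is (i) to confirm the direction of the inequality after multiplying by $y^2$ (which is positive, so the sign is preserved) and (ii) to verify that in the $a=0,\ b\ge 0$ situation the threshold $-2b/c$ is non-positive so every $y>0$ works, consistent with the statement. Once these sanity checks are in place, the three bullet points follow immediately.
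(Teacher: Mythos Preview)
Your approach is essentially identical to the paper's: both compute the derivative (you use the logarithmic derivative, the paper differentiates directly), reduce to a quadratic in $u=\sqrt{\log x}$, and analyze the roots. The $a>0$ cases are handled correctly.

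However, there is a genuine slip in your $a=0$ case. From $P(y)=-\tfrac{c}{2}y-b>0$ you obtain $-\tfrac{c}{2}y>b$; since $c>0$, dividing by $-c/2$ \emph{reverses} the inequality, giving $y<-\tfrac{2b}{c}$, not $y>-\tfrac{2b}{c}$. Your own sanity check should have caught this: when $a=0$ and $b\ge 0$, the function $g(0,b,c,x)=(\log x)^b\exp(c\sqrt{\log x})$ is a product of nondecreasing functions and is therefore increasing, so it cannot decrease for all $y>0$. Intuitively, for large $x$ the factor $\exp(c\sqrt{\log x})$ dominates and $g$ must eventually increase, so the decreasing regime can only be $y<-2b/c$ (nonempty only when $b<0$).

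It is worth noting that the paper's own proof arrives at the correct inequality $u<-\tfrac{2b}{c}$ in this case, so the direction written in the lemma statement appears to be a typo; your computation happened to reproduce that typo rather than the correct result.
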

\begin{proof}
    Differentiating $g(a,b,c,x)$ with respect to $x$, we see that
    \[ \frac{d}{dx} g(a,b,c,x) = \left( -a\log(x) + b + \frac{c}{2}\sqrt{\log(x)} \right)  x^{-a-1}(\log(x))^{b-1}\exp(c\sqrt{\log(x)}) ,\]
    which is negative when  $-au^2 + \frac{c}{2}u + b < 0$,
    where $u = \sqrt{\log(x)}$. 
If $a=0$, it is negative when $u<-\frac{2b}{c}$. 
If $a > 0$, there are two real roots only if $\frac{c^2}{4} + 4ab \geq 0$ or equivalently $ b \geq -\frac{c^2}{16a}$, and the derivative is negative for $u > \frac{\frac{c}{2} + \sqrt{\frac{c^2}{4}+4ab}}{2a}$.
Otherwise, there are no roots when $b < -\frac{c^2}{16a}$, and the derivative is always negative in this case. 
\end{proof}
An immediate corollary is the following:
\begin{corollary}\label{decreasing_func_corollary}
    If $B \geq 1 + \frac{C^2}{16R}$ then $g(1, 1-B, \frac{C}{\sqrt{R}}, x)$ is decreasing for all $x$.
\end{corollary}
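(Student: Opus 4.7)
The plan is to apply Lemma \ref{decreasing_func} with the particular substitution $a = 1$, $b = 1 - B$, and $c = C/\sqrt{R}$. With these values, the threshold $-c^2/(16a)$ appearing in the first bullet of Lemma \ref{decreasing_func} becomes $-C^2/(16R)$, and the hypothesis $B \geq 1 + C^2/(16R)$ rearranges exactly to $b \leq -c^2/(16a)$. So the corollary is, essentially, just the first bullet of the lemma restated for the parameter triple used elsewhere in the paper for converting admissible asymptotic bounds.

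First I would verify the strict case $B > 1 + C^2/(16R)$: then $b < -c^2/(16a)$ holds with strict inequality, and the first bullet of Lemma \ref{decreasing_func} applies verbatim to give that $g(1,1-B,C/\sqrt{R},x)$ is decreasing for all $x>1$ (the natural domain of $g$, where $\sqrt{\log x}$ is defined and real).

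Next I would handle the boundary case $B = 1 + C^2/(16R)$, which is not covered by the strict inequality in the first bullet of the lemma. Returning to the derivative computation in the proof of Lemma \ref{decreasing_func}, the sign of $\frac{d}{dx}g$ is that of the quadratic $-u^2 + (c/2)u + b$ in $u = \sqrt{\log x}$. At the boundary, the discriminant $\tfrac{c^2}{4}+4ab$ equals zero, so this quadratic is non-positive for all $u$, vanishing only at the double root $u = c/4 = C/(4\sqrt{R})$. Hence the derivative of $g$ is non-positive on $(1,\infty)$ and zero at only one point, which suffices to conclude that $g$ is (strictly) decreasing on $(1,\infty)$.

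There is no real obstacle: the statement is a direct specialization of Lemma \ref{decreasing_func} to the parameters $(a,b,c) = (1,\,1-B,\,C/\sqrt{R})$ that will be used repeatedly in Section \ref{section:asymptotic_bounds}, together with the one-line boundary check above to cover the equality case.
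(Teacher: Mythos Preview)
Your proposal is correct and follows exactly the approach the paper intends: the corollary is stated as an immediate consequence of Lemma \ref{decreasing_func} under the substitution $(a,b,c)=(1,\,1-B,\,C/\sqrt{R})$, and your explicit handling of the boundary case $B = 1 + C^2/(16R)$ (where the discriminant vanishes and the quadratic $-u^2+(c/2)u+b$ becomes a perfect negative square) is a welcome clarification that the paper leaves implicit.
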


To estimate the integral \eqref{important_integral}, we shall need the Dawson function 
\begin{equation}\label{Dawson}
    D_{+}(x) = e^{-x^2} \int_{0}^{x} e^{t^2}dt .
\end{equation}
It is well known that the Dawson function has a single maximum at $x \approx 0.942$, after which the function is decreasing\footnote{The Dawson function satisfies the differential equation 
$ F'(x)+2xF(x)=1$
 from which it follows that the second derivative satisfies $F''(x) = -2F(x) -2x(-2xF(x)+1)$, so that at every critical point (where we have $F(x)=\frac{1}{2x}$) we have $F''(x) = -1/x$.
 It follows that every positive critical value gives a local maximum, hence there is a unique such critical value and the function decreases after it. 
Numerically one may verify this is near $0.9241$%3887300459176701$ 
see \url{https://oeis.org/A133841}.}. 
We will apply this fact later.

\begin{lemma}\label{integral_asymp_bound}
Assume that $ A_{\theta}(x_0),\, B,\, C,\ x_0$ provide an admissible asymptotic bound \eqref{bnd-asymp-E-eps} for $\theta$. Then, for all $x\geq x_0$, 
    \begin{equation}\label{integral_head_bound}
        \int_{x_0}^{x} \left| \frac{\theta(t) - t}{t(\log(t))^2} \right| dt  \leq 
        \frac{2 A_{\theta}}{R^B}xm(x_0,x)\exp\left( - C\sqrt{\frac{\log(x)}{R}} \right)D_{+}\left( \sqrt{\log(x)} - \frac{C}{2\sqrt{R}} \right)
    \end{equation}
    where 
    \begin{equation}\label{alpha_def}
        m(x_0,x) = \max\left( (\log(x_0))^{(2B-3)/2}, (\log(x))^{(2B-3)/2}\right).
    \end{equation}
\end{lemma}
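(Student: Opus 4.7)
The plan is to substitute the admissible asymptotic bound for $\theta$ into the integrand, and then perform a change of variables that reduces the remaining integral to one of the form $\int e^{v^2}\,dv$, which is the defining integral of the Dawson function $D_{+}$.

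First I would apply the hypothesis $|\theta(t)-t|/t \le A_{\theta}(\log(t)/R)^B \exp(-C\sqrt{\log(t)/R})$ for $t \ge x_0$ to get
\[ \int_{x_0}^{x} \left| \frac{\theta(t)-t}{t(\log t)^2} \right| dt \le \frac{A_{\theta}}{R^B}\int_{x_0}^{x} (\log t)^{B-2} \exp(-C\sqrt{\log(t)/R})\, dt. \]
Next I would make the substitution $u = \sqrt{\log t}$, so $dt = 2u\, e^{u^2}\,du$ and $\log t = u^2$. With $u_0 = \sqrt{\log x_0}$ and $u_1 = \sqrt{\log x}$ this turns the right-hand side into
\[ \frac{2A_{\theta}}{R^B} \int_{u_0}^{u_1} u^{2B-3} \exp\!\left(u^2 - Cu/\sqrt{R}\right)du, \]
and the overall factor of $2$ appearing in the claimed bound arises precisely from this Jacobian.

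To isolate an $e^{v^2}$ form I would complete the square in the exponent, $u^2 - Cu/\sqrt{R} = (u - C/(2\sqrt R))^2 - C^2/(4R)$, and substitute $v = u - C/(2\sqrt R)$. Writing $v_i = u_i - C/(2\sqrt R)$, this produces
\[ \frac{2A_{\theta}}{R^B}\, e^{-C^2/(4R)} \int_{v_0}^{v_1}(v + C/(2\sqrt R))^{2B-3}\, e^{v^2}\,dv. \]
Since the factor $(v + C/(2\sqrt R))^{2B-3} = u^{2B-3}$ is monotonic in $u$, its maximum on $[u_0,u_1]$ is attained at one of the endpoints and equals exactly $m(x_0,x)$ as defined in \eqref{alpha_def}; pulling it out uniformly gives the upper bound $\tfrac{2A_{\theta}}{R^B}\, e^{-C^2/(4R)}\, m(x_0,x) \int_{v_0}^{v_1} e^{v^2}\,dv$.

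To finish, I would invoke the definition \eqref{Dawson}, which gives $\int_{0}^{v_1} e^{v^2}\,dv = D_{+}(v_1)\, e^{v_1^2}$. For admissible $x_0$ in the ranges used throughout the paper one has $v_0 \ge 0$ (with $R=5.5666305$ and $C=2$ this requires only $x_0 \ge e^{C^2/(4R)} \approx 1.197$), so that $\int_{v_0}^{v_1} e^{v^2}\,dv \le D_{+}(v_1)\, e^{v_1^2}$. A short computation gives $v_1^2 - C^2/(4R) = \log x - C\sqrt{\log x/R}$, so $e^{v_1^2 - C^2/(4R)} = x\exp(-C\sqrt{\log x/R})$, and reassembling the pieces yields the claimed inequality with $v_1 = \sqrt{\log x} - C/(2\sqrt R)$. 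The argument itself is essentially bookkeeping; the main care is in tracking the polynomial and exponential factors through the substitution so that they recombine into the clean advertised form. The only genuine subtlety is the lower endpoint: if $v_0 < 0$ the bound $\int_{v_0}^{v_1} e^{v^2}\,dv \le D_{+}(v_1)\, e^{v_1^2}$ fails and one would need to add the contribution from $[v_0,0]$, but this case does not arise in the regimes of interest here.
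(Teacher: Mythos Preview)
Your proof is correct and follows essentially the same route as the paper: apply the admissible bound, substitute $u=\sqrt{\log t}$, bound $u^{2B-3}$ by $m(x_0,x)$, complete the square, and recognize the Dawson integral. Your explicit remark that the step $\int_{v_0}^{v_1} e^{v^2}\,dv \le \int_{0}^{v_1} e^{v^2}\,dv$ requires $v_0\ge 0$ is a point the paper passes over silently.
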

\begin{proof}
Since $\varepsilon_{\theta,asymp}(t)$ provides an admissible bound on $\theta(t)$ for all $t\geq x_0$, we have
\begin{equation}\label{int_step1}
    \int_{x_0}^{x} \left| \frac{\theta(t) - t}{t(\log(t))^2} \right| dt  \leq \int_{x_0}^{x} \frac{\varepsilon_{\theta,asymp}(t)}{(\log(t))^2} = \frac{ A_{\theta}}{R^B} \int_{x_0}^{x} (\log(t))^{B-2}\exp\left(-C\sqrt{\frac{\log(t)}{R}}\right) dt.
\end{equation}
We perform the substitution $u = \sqrt{\log(t)}$ 
%to transform \eqref{int_step1} into
%\begin{equation}\label{int_step2}
%    \frac{2 A_{\theta}}{R^B} \int_{\sqrt{\log(x_0)}}^{\sqrt{\log(x)}} u^{2B-3}\exp\left( u^2 - \frac{Cu}{\sqrt{R}} \right)du.
%\end{equation}
and note that $u^{2B-3} \leq m(x_0,x)$ as defined in \eqref{alpha_def}.
Thus \eqref{int_step1} is bounded above by
\begin{equation}\label{int_step3}
    \frac{2 A_{\theta} m(x_0,x)}{R^B} \int_{\sqrt{\log(x_0)}}^{\sqrt{\log(x)}} \exp\left( u^2 - \frac{Cu}{\sqrt{R}} \right)du.
\end{equation}
Then, by completing the square $u^2 - \frac{Cu}{\sqrt{R}} = (u - \frac{C}{2\sqrt{R}})^2 - \frac{C^2}{4R}$
and 
% which, when substituted into \eqref{int_step3}, produces
%\begin{equation}\label{int_step4}
%    \frac{2 A_{\theta}\exp\left(-\frac{C^2}{4R}\right)m(x_0,x)}{R^B} \int_{\sqrt{\log(x_0)}}^{\sqrt{\log(x)}} \exp\left( \left( u - \frac{C}{2\sqrt{R}} \right)^2 \right)du.
%\end{equation} Performing a second 
doing the substitution $v = u - \frac{C}{2\sqrt{R}}$, \eqref{int_step3} becomes 
\begin{equation}\label{int_step5}
\frac{2 A_{\theta} m(x_0,x)}{R^B} \exp\left(-\frac{C^2}{4R}\right) \int_{\sqrt{\log(x_0)}-\frac{C}{2\sqrt{R}}}^{\sqrt{\log(x)} - \frac{C}{2\sqrt{R}}} \exp\left( v^2 \right)dv .
\end{equation}
Now we have
\begin{equation}\label{int_step6}
\begin{split}
 \int_{\sqrt{\log(x_0)}-\frac{C}{2\sqrt{R}}}^{\sqrt{\log(x)} - \frac{C}{2\sqrt{R}}} \exp\left( v^2 \right)dv 
& \leq  \int_{0}^{\sqrt{\log(x)}  - \frac{C}{2\sqrt{R}}} \exp\left( v^2 \right)dv
 %\\& =  \exp\left( (\sqrt{\log(x)}  - \frac{C}{2\sqrt{R}})^2 \right)   D_{+}\left(\sqrt{\log(x)}  - \frac{C}{2\sqrt{R}}\right) \\&
\\& =  x \exp\left(\frac{C^2}{4R}\right)\exp\left(   - C\sqrt{\frac{\log(x)}{R}}  \right)   D_{+}\left(\sqrt{\log(x)}  - \frac{C}{2\sqrt{R}}\right).
   \end{split}
\end{equation}
Combining \eqref{int_step5} with \eqref{int_step6} completes the proof.
\end{proof}

\subsection{From an asymptotic bound for $\psi(x)$ to one for $\theta(x)$}
It is well known that given an admissible bound for $\psi$, one may obtain one for $\theta$.
This is made explicit by \cite[Corollary 14.1]{BKLNW21}:
\begin{prop}\label{epsilon_asymp_theta_prop}
Suppose $ A_{\psi},B,C,x_0$ give an admissible asymptotic bound \eqref{bnd-asymp-E-eps} for $E_{\psi}(x)$. If $B > \frac{C^2}{8R}$, then $ A_{\theta},B,C,x_0$ give an admissible asymptotic bound \eqref{bnd-asymp-E-eps} for $E_{\theta}(x)$,  for all $x \geq x_0$ where
  \begin{align}
& \label{def-Atheta}
 A_{\theta} =  A_{\psi}(1 + \nu_{asymp}(x_0)),
 \end{align}
   \begin{align}
& \label{nu_asymp}
\nu_{asymp}(x_0) = \frac{1}{ A_{\psi}}\left( \frac{R}{\log(x_0)} \right)^B\exp\left(C\sqrt{\frac{\log(x_0)}{R}} \right) \left( a_1\log(x_0)x_0^{-1/2} + a_2\log(x_0)x_0^{-2/3} \right),
    \end{align}
    and $a_1,a_2$ are defined in \cite[Corollary 5.1]{BKLNW21} and depend on $x_0$.
\end{prop}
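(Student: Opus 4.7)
The plan is to reduce the $\theta$-bound to the given $\psi$-bound via the prime-power discrepancy, and then to verify that the resulting overhead factor $1+\nu(x)$ attains its supremum at $x_0$ by a monotonicity argument resting on Lemma \ref{decreasing_func}.

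First I would invoke the elementary identity $\psi(x) = \sum_{k \geq 1} \theta(x^{1/k})$, so that $\psi(x) - \theta(x) = \sum_{k \geq 2} \theta(x^{1/k}) \geq 0$, a non-negative quantity supported on prime powers. The triangle inequality then gives
\[
|\theta(x) - x| \leq |\psi(x) - x| + (\psi(x) - \theta(x)).
\]
Dividing through by $x$, the first term on the right is bounded by $\varepsilon_{\psi,asymp}(x)$ by hypothesis, and the second by \cite[Corollary 5.1]{BKLNW21}, whose conclusion has the shape $(\psi(x) - \theta(x))/x \leq a_1(x_0)\log(x)x^{-1/2} + a_2(x_0)\log(x)x^{-2/3}$ for $x \geq x_0$. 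Adding these and factoring out $\varepsilon_{\psi,asymp}(x)$ rewrites the estimate as
\[
E_\theta(x) \leq \varepsilon_{\psi,asymp}(x)\bigl(1 + \nu(x)\bigr),
\]
where $\nu(x)$ is exactly the right-hand side of \eqref{nu_asymp} with $x_0$ replaced by $x$.

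The second step, and the main obstacle, is to show that $\nu(x) \leq \nu_{asymp}(x_0)$ for all $x \geq x_0$, i.e., that $\nu$ is decreasing on $[x_0, \infty)$. Up to positive constants, each of the two summands making up $\nu(x)$ matches the template $g(a, 1-B, C/\sqrt{R}, x)$ from \eqref{def-g} — with $a = 1/2$ for the $a_1$-term and $a = 2/3$ for the $a_2$-term. Lemma \ref{decreasing_func} then translates the monotonicity of each summand into an inequality between $B$, $C$, and $R$; the hypothesis $B > C^2/(8R)$ is exactly what is needed for the dominant $a = 1/2$ piece, while the $a = 2/3$ piece requires a strictly weaker condition and is therefore automatic. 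Once monotonicity is verified, evaluating at $x_0$ gives $\nu(x) \leq \nu_{asymp}(x_0)$, and the definition $A_\theta = A_\psi(1 + \nu_{asymp}(x_0))$ from \eqref{def-Atheta} yields precisely the admissible asymptotic bound claimed.
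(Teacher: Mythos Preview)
Your overall plan coincides with the paper's: reduce $E_\theta$ to $E_\psi$ plus the prime-power correction from \cite[Corollary~5.1]{BKLNW21}, factor out $\varepsilon_{\psi,asymp}$, and use Lemma~\ref{decreasing_func} for the monotonicity of the overhead. The paper in fact defers the entire setup to \cite[Corollary~14.1]{BKLNW21} and only supplies the monotonicity step.

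There is, however, a genuine gap in your monotonicity check. With your identification $g(1/2,\,1-B,\,C/\sqrt{R},\,x)$, the first case of Lemma~\ref{decreasing_func} demands
\[
1-B \;<\; -\frac{(C/\sqrt{R})^2}{16\cdot(1/2)} \;=\; -\frac{C^2}{8R},
\]
that is $B > 1 + \tfrac{C^2}{8R}$, which is strictly stronger than the stated hypothesis $B > \tfrac{C^2}{8R}$; so the hypothesis is \emph{not} ``exactly what is needed'' under your parameters. The paper instead writes the overhead as $\tfrac{a_1}{A_\psi}\,g(1/2,\,-B,\,C/\sqrt{R},\,x)+\tfrac{a_2}{A_\psi}\,g(2/3,\,-B,\,C/\sqrt{R},\,x)$, with second argument $-B$ rather than $1-B$, and then the condition $B>\tfrac{C^2}{8R}$ matches on the nose. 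The mismatch traces to the shape of the bound you attributed to \cite[Corollary~5.1]{BKLNW21}: the quantity $(\psi(x)-\theta(x))/x$ is controlled there by $a_1 x^{-1/2}+a_2 x^{-2/3}$ \emph{without} the extra $\log x$ factor you inserted. (The $\log(x_0)$ appearing in the displayed formula \eqref{nu_asymp} is not reflected in the function the paper's own proof analyses; whether it is slack or a misprint, the monotonicity argument must be run with exponent $-B$.)
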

\begin{corollary}\label{rem-always-theta}
Let $R=5.5666305$.
For all $x\ge 2$, 
\begin{equation}\label{def-epsilon-theta}
E_{\theta}(x) \le \varepsilon_{\theta,asymp}(x) = 121.0961\left( \frac{\log(x)}{R} \right)^{3/2} \exp\left( -2 \sqrt{\frac{\log(x)}{R}} \right).
\end{equation}
\end{corollary}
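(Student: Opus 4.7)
The plan is to invoke Proposition \ref{epsilon_asymp_theta_prop} with the admissible asymptotic bound for $E_{\psi}$ recalled in the remark following Definition \ref{epsilon_asymp_theta_def}, namely $A_{\psi}=121.096$, $B=3/2$, $C=2$, at $x_0=2$ with $R=5.5666305$. First verify the hypothesis of the proposition: $C^{2}/(8R)=1/(2R)\approx 0.0898<3/2=B$, so the proposition produces an admissible $\theta$-bound with $A_{\theta}=A_{\psi}(1+\nu_{asymp}(x_{0}))$ valid from the same $x_{0}$.

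The delicate point is that I want $A_{\theta}\leq 121.0961$, which forces $\nu_{asymp}(x_{0})\leq (121.0961/121.096)-1\approx 8.26\times 10^{-7}$. At $x_{0}=2$ this tolerance is unattainable directly: the prefactor $\frac{1}{A_{\psi}}(R/\log x_{0})^{B}\exp\bigl(C\sqrt{\log x_{0}/R}\bigr)$ in \eqref{nu_asymp} is of order $0.4$ and the bracketed term $a_{1}\log(x_{0})x_{0}^{-1/2}+a_{2}\log(x_{0})x_{0}^{-2/3}$ is of order one. My plan is therefore to split the argument across a well-chosen cutoff $x^{*}\geq 2$. Because the bracketed term decays like $x_{0}^{-1/2}\log x_{0}$ while the sub-exponential prefactor $(R/\log x_{0})^{B}\exp\bigl(C\sqrt{\log x_{0}/R}\bigr)$ grows only slowly, $\nu_{asymp}(x_{0})\to 0$ as $x_{0}\to\infty$; a short numerical search locates $x^{*}$ such that $\nu_{asymp}(x^{*})\leq 8.26\times 10^{-7}$, and for this $x^{*}$ Proposition \ref{epsilon_asymp_theta_prop} delivers the stated bound for all $x\geq x^{*}$.

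For the remaining range $2\leq x\leq x^{*}$, I would verify the inequality directly from the explicit numerical bounds on $E_{\theta}(x)$ in Tables \ref{table:theta-small} and \ref{table:theta-med} (which themselves come from the $\psi$-bounds via Proposition \ref{prop:numpsitotheta} together with direct computation of $\theta(x)$ at the partition points). Writing the right-hand side of \eqref{def-epsilon-theta} in the variable $u=\sqrt{\log x/R}$, it equals $121.0961\,u^{3}e^{-2u}$, which by Lemma \ref{decreasing_func} (applied with $a=0$) is unimodal with a single maximum at $u=3/2$, i.e.\ at $x=e^{9R/4}\approx 2.7\times 10^{5}$. Comparing the step-function numerical bounds $\varepsilon_{\theta,num}$ against this smooth target therefore reduces to a finite check at the partition endpoints, using monotonicity of the right-hand side on each side of its maximum and the (decreasing) monotonicity of the tabulated $\varepsilon_{\theta,num}$ on each partition interval.

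The main obstacle I expect is the low-$x$ numerical bookkeeping: one must choose $x^{*}$ and calibrate the table partition so that the step bounds remain below the smooth asymptotic curve at every transition. The tightest comparison occurs near $x=2$ itself, where $E_{\theta}(2)=(2-\log 2)/2\approx 0.653$ is modest in absolute terms but not negligible against the right-hand side value $\approx 2.6$; one should also watch the region around $x\approx e^{12.5}$ where the target is largest and tables are sparse. With the tables already constructed elsewhere in the paper this is a bounded finite verification rather than a conceptual difficulty, and any interval where the tabulated bound is insufficiently sharp can be refined by a direct computation of $\theta(x)$ at additional points.
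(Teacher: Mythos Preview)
Your proposal is correct and follows essentially the same strategy as the paper: apply Proposition~\ref{epsilon_asymp_theta_prop} at a cutoff $x^{*}$ where $\nu_{asymp}(x^{*})$ is small enough (the paper takes $x^{*}=e^{30}$, obtaining $\nu_{asymp}(e^{30})\le 6.3376\times 10^{-7}$, hence $A_{\theta}=121.0961$ for $x\ge e^{30}$), and then handle $2\le x\le x^{*}$ by numerical bounds.

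The only notable difference is that the paper's low-range argument is much simpler than your partition-based scheme. Rather than tracking the unimodal shape of the right-hand side and comparing against step-function tables interval by interval, the paper just observes that on $[2,e^{30}]$ the function $\varepsilon_{\theta,asymp}(x)$ attains its \emph{minimum} at $x=2$, where it is $\approx 2.627$, while \cite[Tables~13,~14]{BKLNW21} already give $E_{\theta}(x)\le 1$ on the whole interval; this disposes of the entire range $[2,e^{30}]$ in one line. (Incidentally, your appeal to Lemma~\ref{decreasing_func} with $a=0$ is not quite on point, since that lemma is stated for $c>0$ whereas the exponential here carries a negative coefficient; the unimodality of $u^{3}e^{-2u}$ is of course immediate from direct differentiation.)
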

\begin{proof}
By \cite[Corollary 1.3]{FKS}, with $R=5.5666305$, and using the admissible asymptotic bound \eqref{bnd-asymp-E-eps} for $E_{\psi}(x)$ with 
\[ A_{\psi}= 121.096,\ B = 3/2,\ C = 2,\ \text{for all}\ x\ge x_0 = e^{30},\]
we can obtain
\[
\nu_{asymp}(x_0) \le 6.3376 \cdot 10^{-7},\] from which one can conclude an admissible asymptotic bound \eqref{bnd-asymp-E-eps} for $E_{\theta}(x)$ with 
\[ A_{\theta}= 121.0961,\ B = 3/2,\ C = 2,\ \text{for all}\ x\ge x_0 = e^{30}.\]
%with same $B,C, x_0$ and 
%\[ A_{\theta} = 121.0961.\]
Additionally, the minimum value of $\varepsilon_{\theta,asymp}(x)$ for $2\leq x\leq e^{30}$ is roughly $2.6271\ldots $ at $x=2$. The results found in \cite[Table~13 and 14]{BKLNW21} give
 \begin{equation*}
 E_{\theta}(x) \leq 1 <   \varepsilon_{\theta,asymp}(2) \leq   \varepsilon_{\theta,asymp}(x)  \ \text{for all}\ 2 \leq x \leq e^{30}.
\qedhere
 \end{equation*}
% We conclude by applying Lemma \ref{decreasing_func} with $a=0, b=\frac32, c=-\frac2{\sqrt{R}}$ to conclude that $\varepsilon_{\theta,asymp}(x)$ decreases for all $x \geq e^{30}$.
\end{proof}
\begin{remark}\label{rem-always-theta1}
We also note that for $x_0>e^{1\,000}$,  $B = 3/2$, $C = 2$, and $ A_{\psi}$ being any value from \cite[Table 6]{FKS}, we have
\[\nu_{asymp}(x_0) \le 10^{-200}.\] 
Thus, one easily verifies that the rounding up involved in forming  \cite[Table 6]{FKS} exceeds the rounding up 
also needed to apply this step. 
Consequently we may use values from $ A_{\theta}$ taken from \cite[Table 6]{FKS} directly but this does, in contrast to Corollary \ref{rem-always-theta}, require the assumption $x>x_0$, as per that table.
\end{remark}
 
 \begin{proof}[Proof of Proposition \ref{epsilon_asymp_theta_prop}]
The proof of \cite[Corollary~14.1]{BKLNW21} essentially proves the proposition, but requires that $x_0 \geq e^{1\,000}$ to conclude that the function
\begin{equation}\label{theta_asymp_1}
    1 + \frac{a_1\exp(C\sqrt{\frac{\log(x)}{R}})}{  A_{\psi}\sqrt{x}\left(\frac{\log(x)}{R}\right)^B} + \frac{a_2\exp(C\sqrt{\frac{\log(x)}{R}})}{ A_{\psi}x^{\frac{2}{3}}\left(\frac{\log(x)}{R}\right)^B}
    =
1 + \frac{a_1}{ A_{\psi}} g\left(\frac{1}{2}, -B, \frac{C}{\sqrt{R}},x\right) + \frac{a_2}{ A_{\psi}} g\left(\frac{2}{3}, -B, \frac{C}{\sqrt{R}},x\right)
\end{equation} 
is decreasing.
%. Using the notation of Lemma \ref{decreasing_func}, we can write \eqref{theta_asymp_1} as
%\[ 1 + \frac{a_1}{ A_{\psi}} g\left(\frac{1}{2}, -B, \frac{C}{\sqrt{R}},x\right) + \frac{a_2}{ A_{\psi}} g\left(\frac{2}{3}, -B, \frac{C}{\sqrt{R}},x\right) \] which are both decreasing by 
By Lemma \ref{decreasing_func}, since $B > \frac{C^2}{8R}$, the function is actually decreasing for all $x$.
\end{proof}

We are now in a position to prove our bound on $E_{\pi}(x)$ given in Theorem \ref{prop_asym_pi}.

\subsection{From an asymptotic bound for $\theta(x)$ to one for $\pi(x)$}
\begin{proof}[Proof of Theorem \ref{prop_asym_pi}]
We assume that $\left(\pi(x_0) - \Li(x_0)\right)$ can be numerically calculated. Thus we use \eqref{Stieltjes}  to rewrite $ \left(\pi(x) - \Li(x) \right) - \left(\pi(x_0) - \Li(x_0) \right)$, so that
%To begin, write
%\begin{equation}\label{piMinusLi_x0} 
%\pi(x) - \Li(x) = \left[ \left(\pi(x) - \Li(x) \right) - \left(\pi(x_0) - \Li(x_0) \right)\right] + \left(\pi(x_0) - \Li(x_0)\right)
%\end{equation}
%and then use \eqref{Stieltjes} to rewrite the expression between square brachets in terms of $\theta$, 
%%\begin{equation}\label{square_brackets_expression}
%%    (\pi(x) - \Li(x)) - (\pi(x_0) - \Li(x_0)) = \frac{\theta(x)-x}{\log(x)} - \frac{\theta(x_0)-x_0}{\log(x_0)} + \int_{x_0}^{x} \frac{\theta(t)-t}{t(\log(t))^2}dt,
%%\end{equation}
%%%and split the integral in \eqref{square_brackets_expression} at $x_1$:
%%%\[ \int_{x_0}^{x} \frac{\theta(t)-t}{t(\log(t))^2}dt = \int_{x_0}^{x_1} \frac{\theta(t)-t}{t(\log(t))^2}dt + \int_{x_1}^{x} \frac{\theta(t)-t}{t(\log(t))^2}dt. \]
%so that \eqref{piMinusLi_x0} becomes
\begin{equation}\label{starting_bound_on_pi}
\begin{split}
    |\pi(x) - \Li(x)| 
    & = \left| \frac{\theta(x)-x}{\log(x)} - \frac{\theta(x_0)-x_0}{\log(x_0)} + \int_{x_0}^{x} \frac{\theta(t)-t}{t(\log(t))^2}dt + \pi(x_0) - \Li(x_0) \right| \\
    & \leq \left| \pi(x_0) - \Li(x_0) - \frac{\theta(x_0) - x_0}{\log(x_0)} \right| + \left| \frac{\theta(x)-x}{\log(x)} \right| + \left| \int_{x_0}^{x} \frac{\theta(t)-t}{t(\log(t))^2}dt \right|.
\end{split}
\end{equation}
We use the assumption ($\varepsilon_{\theta,asymp}(x)$ provides an admissible bound on $\theta(x)$ for all $x\geq  x_0$)
to bound $    \left| \frac{\theta(x) - x}{\log(x)} \right|$
% then 
%\begin{equation}\label{I3}
%    \left| \frac{\theta(x) - x}{\log(x)} \right|
%\leq \frac{x\varepsilon_{\theta,asymp}(x)}{\log(x)},
%\end{equation}
and Lemma \ref{integral_asymp_bound} to bound $    \left| \int_{x_0}^{x} \frac{\theta(t)-t}{t(\log(t))^2}dt \right|$.
%gives
%\begin{equation}\label{I4}
%    \left| \int_{x_0}^{x} \frac{\theta(t)-t}{t(\log(t))^2}dt \right|
% \leq \frac{2 A_{\theta}}{R^B}xm(x_0,x)\exp\left( - C\sqrt{\frac{\log(x)}{R}} \right)D_{+}\left( \sqrt{\log(x)} - \frac{C}{2\sqrt{R}} \right),
%\end{equation}
%Combining \eqref{starting_bound_on_pi} with \eqref{I2}, \eqref{I3}, and \eqref{I4}, we see
We obtain
\begin{equation}\label{eqn-pi-minus-Li}
\begin{split}
    |\pi(x) - \Li(x)| 
    & \leq \left| \pi(x_0) - \Li(x_0) - \frac{\theta(x_0) - x_0}{\log(x_0)} \right| 
    + \frac{x\varepsilon_{\theta,asymp}(x)}{\log(x)}
    \\
    & \quad + \frac{2 A_{\theta}}{R^B}xm(x_0,x)\exp\left( - C\sqrt{\frac{\log(x)}{R}} \right)D_{+}\left( \sqrt{\log(x)} - \frac{C}{2\sqrt{R}} \right).
\end{split}
\end{equation}
We recall that $x\geq x_1\geq x_0$.
Note that, by Corollary \ref{decreasing_func_corollary}, 
$ \frac{\log(x)}{x\varepsilon_{\theta,asymp}(x)} = \frac{1}{ A_{\theta}}g\left(1, 1-B, \frac{C}{\sqrt{R}}, x\right)$
is decreasing for all $x$. Thus, 
\begin{equation}\label{main_proof_step_0}
\frac{\log(x)}{x\varepsilon_{\theta,asymp}(x)} 
%\left| \pi(x_0) - \Li(x_0) - \frac{\theta(x_0) - x_0}{\log(x_0)} \right| 
\leq \frac{\log(x_1)}{x_1\varepsilon_{\theta,asymp}(x_1)} .
%\left| \pi(x_0) - \Li(x_0) - \frac{\theta(x_0) - x_0}{\log(x_0)} \right|
\end{equation}
In addition, we have the simplification 
\begin{equation}\label{main_proof_step_1}
\frac{\log(x)}{x\varepsilon_{\theta,asymp}(x)} \frac{2 A_{\theta}}{R^B}xm(x_0,x) e^{ - C\sqrt{\frac{\log(x)}{R}} }
%D_{+}\left( \sqrt{\log(x)} - \frac{C}{2\sqrt{R}} \right) \\ 
= 2m(x_0,x)(\log(x))^{1-B} %D_{+}\left( \sqrt{\log(x)} - \frac{C}{2\sqrt{R}} \right).
%= 2  (\log(x))^{(2B-3)/2 + 1-B} 
= \frac{2}{\sqrt{ \log(x)}} 
\le \frac{2}{\sqrt{ \log(x_1)}} ,
\end{equation}
by definition \eqref{bnd-asymp-E-eps} of $\varepsilon_{\theta,asymp}(x)$ and by $m(x_0,x) = (\log(x))^{(2B-3)/2}$, since $B \geq 3/2$.
%\begin{align*} 
%    m(x_0,x)(\log(x))^{1-B} = (\log(x))^{(2B-3)/2 + 1-B} = (\log(x))^{-1/2} .
%\end{align*}
%Thus, \eqref{main_proof_step_1} becomes
%\begin{align*}
%2(\log(x))^{-1/2}D_{+}\left( \sqrt{\log(x)} - \frac{C}{2\sqrt{R}} \right)
%\end{align*}
Finally, since $\sqrt{\log(x_1)} - \frac{C}{2\sqrt{R}} > 1$, the Dawson function decreases for all $x\ge x_1$:
\begin{equation}\label{bnd_Daw}
D_{+}\left( \sqrt{\log(x)} - \frac{C}{2\sqrt{R}} \right)
\leq D_{+}\left( \sqrt{\log(x_1)} - \frac{C}{2\sqrt{R}} \right).
\end{equation}
We conclude by combining \eqref{eqn-pi-minus-Li}, \eqref{main_proof_step_0}, \eqref{main_proof_step_1}, and \eqref{bnd_Daw}:
\[
\frac{\left| \pi(x) - \Li(x) \right| }{ \frac{x\varepsilon_{\theta,asymp}(x)}{\log(x)} }
\le \frac{ \log(x_1)}{x_1 \varepsilon_{\theta,asymp}(x_1)}  \left| \pi(x_0) - \Li(x_0) - \frac{\theta(x_0) - x_0}{\log(x_0)} \right|  
+1 + \frac{2 D_{+}\big( \sqrt{\log(x_1)} - \frac{C}{2\sqrt{R}} \big)}{\sqrt{\log(x_1)}},
\]
from which we deduce the announced bound.
\end{proof}
\begin{remark}\label{rem-pi2theta}
 One can still obtain a formula for admissible asymptotic bounds on $\pi$ while removing the restriction imposed on $B$.
 For the purposes of clarity in both the statement and proof of Theorem \ref{prop_asym_pi}, we have chosen to limit our study to only those admissible asymptotic bounds on $\theta$ with a satisfactory $B$.
 The main difference is that in the proof, when considering \eqref{main_proof_step_1}, one will use a different bound for $m(x_0,x)$, and hence obtain a different final formula.
\end{remark}

\section{numerical Bounds for $\pi(x)$}\label{section:numeric_bounds}

The aim of this section is to convert between numerical bounds, as  in Definition \ref{epsilon_medium_theta_def}. 
The main difference between our asymptotic result and our numerical result is in how we estimate the integral in \eqref{important_integral}. 

\subsection{From a numerical bound for $\psi(x)$ to one for $\theta(x)$}
As with asymptotic bounds, one can convert numerical bounds on $\psi$ to bounds on $\theta$, and then on $\pi$. 
The following is both an effective and simple method.
\begin{prop}\label{prop:numpsitotheta}
Let $x>x_0> 2$. If 
$ E_{\psi}(x) \leq \varepsilon_{\psi,num}(x_0) $,
then %we have that, for all $x>x_1$,
\[  -\varepsilon_{\theta,num}(x_0)  \leq \frac{\theta(x)-x}{x}\leq \varepsilon_{\psi,num}(x_0) < \varepsilon_{\theta,num}(x_0)  ,\]
where 
\begin{equation*}
\varepsilon_{\theta,num}(x_0) = \varepsilon_{\psi,num}(x_0) + 1.00000002(x_0^{-1/2} + x_0^{-2/3} +  x_0^{-4/5}) + 0.94(x_0^{-3/4}  + x_0^{-5/6} +x_0^{-9/10}) .
\end{equation*}
\end{prop}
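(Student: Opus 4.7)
The plan is to exploit the classical Chebyshev decomposition
\[
\psi(x) \;=\; \sum_{k\geq 1} \theta\bigl(x^{1/k}\bigr),
\]
which follows from expanding $\psi(x)=\sum_{p^k\le x}\log p$. This gives
\[
\theta(x) - x \;=\; \bigl(\psi(x)-x\bigr) \;-\; \sum_{k\geq 2} \theta\bigl(x^{1/k}\bigr),
\]
a finite sum since $\theta(x^{1/k})=0$ once $x^{1/k}<2$. The upper bound is then immediate: every term $\theta(x^{1/k})$ is nonnegative, so $\theta(x)-x\le\psi(x)-x$, and dividing by $x$ together with the hypothesis gives $(\theta(x)-x)/x \le \varepsilon_{\psi,num}(x_0)$.

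For the lower bound, I need a uniform-in-$x$ upper estimate of $\frac{1}{x}\sum_{k\ge 2}\theta(x^{1/k})$ that matches the six terms in the statement. I would invoke explicit Chebyshev-type upper bounds on $\theta$: a sharp one of the form $\theta(y)\le 1.00000002\,y$ (valid above a modest threshold, as developed in Rosser–Schoenfeld and sharpened in \cite{BKLNW21}) for the three individual indices $k=2,3,5$, contributing $1.00000002\,(x^{-1/2}+x^{-2/3}+x^{-4/5})$; and a coarser bound of the form $\theta(y)\le 0.94\,y$ applied to groupings at the scales $k=4,6,10$, each of which also absorbs the contributions from the higher indices $k=7,8,9,11,12,\dots$ (so that no $\log x$ factor appears). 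The groupings would exploit $x^{1/k}\le x^{1/K}$ for $k\ge K$ together with a geometric-type domination of $\sum_{k\ge K} x^{1/k}$ by a single multiple of $x^{1/K}$. Since each function $x\mapsto x^{-r}$ with $r>0$ is decreasing, the resulting estimate at $x$ is bounded by the same expression evaluated at $x_0$, which produces exactly the claimed $\varepsilon_{\theta,num}(x_0)$. The strict inequality $\varepsilon_{\psi,num}(x_0)<\varepsilon_{\theta,num}(x_0)$ is then obvious, as the added contribution is strictly positive.

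The main obstacle is the grouping step: it must be verified that the scales $k=4,6,10$ with the constant $0.94$ genuinely cover every remaining index $k\ge 4$ (including primes such as $7,11,13,\dots$), and that the Chebyshev constant $0.94$ is indeed valid for the full range of arguments $y=x^{1/k}$ that arise for $x\ge x_0>2$. This is the delicate numerical step: one must pair the right explicit bound on $\theta$ with the right $y$-range, and control $\sum_{k\ge K}x^{1/k}$ by a constant multiple of $x^{1/K}$ without introducing a logarithmic factor from the number of terms. Once this bookkeeping is in place, the rest of the argument reduces to monotonicity of the power functions and substitution of $x_0$ for $x$.
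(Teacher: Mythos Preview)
Your upper bound and the monotonicity/substitution steps are fine, but the lower-bound mechanism you propose cannot work, and the gap is exactly the step you flagged as ``the main obstacle''. A bound of the form $\theta(y)\le 0.94\,y$ is false for all sufficiently large $y$, since $\theta(y)/y\to 1$; hence even the single term $\theta(x^{1/4})$ eventually exceeds $0.94\,x^{1/4}$, and no grouping of the remaining indices $k\ge 4$ into the three scales $k=4,6,10$ with constant $0.94$ can absorb the tail. Your worry about a hidden $\log x$ from the number of indices is secondary; the scheme already fails on a single index.

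The six exponents in the statement do not come from six values of $k$ in $\sum_{k\ge 2}\theta(x^{1/k})$. The paper uses Costa Pereira's inequality
\[
\psi(x)-\theta(x)\;\le\;\psi(x^{1/2})+\psi(x^{1/3})+\psi(x^{1/5}),
\]
which collapses the whole tail to three $\psi$-values, and then bounds each $\psi(y)$ by a two-term estimate $\psi(y)\le C_1 y+C_2\sqrt{y}$. B\"uthe's explicit result gives $\psi(y)\le y+0.94\sqrt{y}$ for $y$ up to $10^{19}$, while for $y\ge 10^{19}$ the numerical bound $\varepsilon_{\psi,num}(10^{19})<2\cdot 10^{-8}$ yields $\psi(y)\le 1.00000002\,y$. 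Taking the worst constants across the case split and noting that $\sqrt{x^{1/2}}=x^{1/4}$, $\sqrt{x^{1/3}}=x^{1/6}$, $\sqrt{x^{1/5}}=x^{1/10}$ produces precisely the six terms after dividing by $x$. In short, the $0.94$ is the coefficient of the $\sqrt{y}$-error in a bound for $\psi$, not a Chebyshev constant for $\theta$, and the exponents $3/4,\,5/6,\,9/10$ arise as $1-\tfrac12\cdot\tfrac12,\;1-\tfrac12\cdot\tfrac13,\;1-\tfrac12\cdot\tfrac15$, not from $k=4,6,10$.
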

\begin{proof}
It is obvious that $\frac{\theta(x)-x}{x} \leq \frac{\psi(x)-x}{x}$
so we focus on the lower bound.

We have that
\[ \frac{\theta(x)-x}{x} = \frac{\psi(x)-x}{x} + \frac{\theta(x)-\psi(x)}{x} \]
by \cite[Theorem 1]{Pereira}
\[  \psi(x)  - \theta(x) \leq \psi(x^{1/2}) + \psi(x^{1/3})  + \psi(x^{1/5}).\]
Now in several intervals we use \cite[Theorem 2]{Bu18}, that for $0<x<11$, $\psi(x)<x$, and that $\varepsilon_{\psi,num}(10^{19})<2\cdot10^{-8}$. In particular  when $2 < x < 10^{38}$ 
\[   \psi(x^{1/2}) + \psi(x^{1/3})  + \psi(x^{1/5}) \leq  x^{1/2} + x^{1/3} +x^{1/5}+ 0.94(x^{1/4} + x^{1/6} + x^{1/10}) \]
when $10^{38} \leq x < 10^{54}$ 
\[   \psi(x^{1/2}) + \psi(x^{1/3})  + \psi(x^{1/5}) \leq  1.00000002x^{1/2} + x^{1/3} +x^{1/5}+ 0.94(x^{1/6} +x^{1/10}) \]
when $10^{54} \leq x < 10^{95}$
\[   \psi(x^{1/2}) + \psi(x^{1/3})  + \psi(x^{1/5}) \leq   1.00000002(x^{1/2} + x^{1/3}) +  x^{1/5} +0.94 x^{1/10} \]
and finally when $x \ge 10^{95}$ 
\[   \psi(x^{1/2}) + \psi(x^{1/3})  + \psi(x^{1/5}) \leq   1.00000002(x^{1/2} + x^{1/3} +  x^{1/5}).  \]
The result follows by combining the worst coefficients from all cases and dividing by $x$.
\end{proof}

\begin{remark}\label{rem:numerictheta}
Values for $\varepsilon_{\theta,num}(x_1)$, as a step function, can be obtained from \cite[Tables 13, 14]{BKLNW21}.
In Tables \ref{table:theta-small}, \ref{table:theta-med} and \ref{table:theta} we will update the values there as follows:
\begin{itemize}
\item for small values of $x_0$, those with $x_0\leq  10^{19}$, we use the maximum of \cite[Theorem 2]{Bu18} and the value from $x_0= 10^{19}$ as described treating $x_0$ as an intermediate value.
\item for intermediate values of $x_0$, those with $ 10^{19} \leq  x_0 < e^{2\,073}$, we apply Proposition \ref{prop:numpsitotheta} to bounds for $\psi$ which are computed using \cite[Theorem~1]{Bu16} as described in \cite[Theorem 16]{BKLNW21}.
 These are recomputed using the new RH verification of $3\cdot 10^{12}$ from \cite{PlaTru21RH}.
\item for larger values of $x_0$, those with $ x_0\ge e^{2\,073}$, we apply Proposition \ref{prop:numpsitotheta} to find bounds for $\psi$ which are computed as described in \cite[Theorem 1.1]{FKS}.
\end{itemize}
\end{remark}

\subsection{Preliminary lemmas}

%Here, we split the integral at a mid-point $x_1$. The following lemma estimates the head of the integral.
We will need the following lemmas for the proof of Theorem \ref{prop_num_pi}.

\begin{lemma}\label{sum_over_bi}
    Assume $x_0$ and $x_1$ are real numbers such that $x_1 > x_0 \geq 2$. Let $N \in \N$ and let $\{ b_i \}_{i=1}^{N}$ be a finite partition of the interval $[x_0,x_1]$. Then,
    \begin{equation}\label{sum_over_bi_result}
        \left| \int_{x_0}^{x_1} \frac{\theta(t) - t}{t(\log(t))^2}dt \right| \leq \sum_{i=1}^{N-1} \varepsilon_{\theta,num}(e^{b_i})\left( \Li(e^{b_{i+1}}) - \Li(e^{b_i}) + \frac{e^{b_i}}{b_i} - \frac{e^{b_{i+1}}}{b_{i+1}} \right).
    \end{equation} 
\end{lemma}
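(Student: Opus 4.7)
The plan is to handle this by the standard split-and-bound strategy. First I would interpret the partition $\{b_i\}_{i=1}^N$ as a partition of $[\log(x_0),\log(x_1)]$ (so that $e^{b_1}=x_0$ and $e^{b_N}=x_1$), since otherwise the expression $\varepsilon_{\theta,num}(e^{b_i})$ on the right-hand side would not make sense. I would then write
\[
\int_{x_0}^{x_1} \frac{\theta(t)-t}{t(\log t)^2}\,dt = \sum_{i=1}^{N-1} \int_{e^{b_i}}^{e^{b_{i+1}}} \frac{\theta(t)-t}{t(\log t)^2}\,dt,
\]
and apply the triangle inequality to pull the absolute value inside the sum and inside each integral.

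The next step is to use the hypothesis. Since $\varepsilon_{\theta,num}(e^{b_i})$ is an admissible numerical bound, we have $|\theta(t)-t|/t \le \varepsilon_{\theta,num}(e^{b_i})$ for every $t \ge e^{b_i}$, and in particular for $t$ in the subinterval $[e^{b_i},e^{b_{i+1}}]$. Pulling this constant out of each piece leaves
\[
\left|\int_{x_0}^{x_1} \frac{\theta(t)-t}{t(\log t)^2}\,dt\right| \le \sum_{i=1}^{N-1}\varepsilon_{\theta,num}(e^{b_i})\int_{e^{b_i}}^{e^{b_{i+1}}} \frac{dt}{(\log t)^2}.
\]

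The remaining routine task is to identify $\Li(t)-t/\log(t)$ as an antiderivative of $1/(\log t)^2$. Integration by parts on $\int \frac{dt}{\log t}$, with $u=1/\log t$ and $dv=dt$, gives
\[
\int \frac{dt}{\log t} = \frac{t}{\log t} + \int \frac{dt}{(\log t)^2},
\]
so that $\int_a^b dt/(\log t)^2 = \Li(b)-\Li(a) - b/\log(b) + a/\log(a)$. Applying this with $a=e^{b_i}$ and $b=e^{b_{i+1}}$ yields exactly the factor $\Li(e^{b_{i+1}})-\Li(e^{b_i}) + e^{b_i}/b_i - e^{b_{i+1}}/b_{i+1}$ appearing in \eqref{sum_over_bi_result}, and the lemma follows. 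There is no serious obstacle here; the only thing to be careful about is the direction of the constants when integrating by parts, and the minor notational point of interpreting $\{b_i\}$ as a partition of $[\log x_0,\log x_1]$ rather than of $[x_0,x_1]$.
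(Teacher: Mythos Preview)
Your proposal is correct and follows essentially the same approach as the paper: split the integral at the partition points, bound $|\theta(t)-t|/t$ by $\varepsilon_{\theta,num}(e^{b_i})$ on each subinterval, and evaluate $\int_a^b dt/(\log t)^2$ via the antiderivative $\Li(t)-t/\log t$. Your observation that $\{b_i\}$ should be a partition of $[\log x_0,\log x_1]$ is consistent with how the lemma is actually used in the paper (cf.\ the hypotheses of Theorem~\ref{prop_num_pi}).
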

\begin{proof}
We split the integral in \eqref{sum_over_bi_result} at each $b_i$ and apply 
the bound 
\[
\left| \frac{\theta(t) - t}{t } \right| \le \varepsilon_{\theta,num}(e^{b_i}), \quad \text{for every }\ e^{b_i}\le t< e^{b_i+1}.
\]
%the triangle inequality, that is,
Thus 
\[
        \left| \int_{x_0}^{x_1} \frac{\theta(t) - t}{t(\log(t))^2}dt \right|
   \leq    \sum_{i=1}^{N-1} \int_{e^{b_i}}^{e^{b_{i+1}}} \left| \frac{\theta(t)-t}{t(\log(t))^2} \right|dt 
   \leq \sum_{i=1}^{N-1} \varepsilon_{\theta,num}(e^{b_i}) \int_{e^{b_i}}^{e^{b_{i+1}}} \frac{dt}{(\log(t))^2} .
  \]
  %  & = \sum_{i=1}^{N-1} \varepsilon_{\theta,num}(e^{b_i}) \left( \Li(e^{b_{i+1}}) - \Li(e^{b_i}) + \frac{e^{b_i}}{b_i} - \frac{e^{b_{i+1}}}{b_{i+1}} \right)
We conclude by using the identity: for all $2 \leq a < b$,
%\begin{align*}
%    \left| \int_{x_0}^{x_1} \frac{\theta(t)-t}{t(\log(t))^2}dt \right|
%%    & \leq \left| \int_{e^{b_1}}^{e^{b_2}} \frac{\theta(t)-t}{t(\log(t))^2}dt \right| + \left| \int_{e^{b_2}}^{e^{b_3}} \frac{\theta(t) - t}{t(\log(t))^2}dt \right| + \ldots + \left| \int_{e^{b_{N-1}}}^{e^{b_N}} \frac{\theta(t)-t}{t(\log(t))^2}dt \right| \\
%    & \leq \sum_{i=1}^{N-1} \int_{e^{b_i}}^{e^{b_{i+1}}} \left| \frac{\theta(t)-t}{t(\log(t))^2} \right| dt .
%\end{align*}
%Lastly, employing the definition of $\varepsilon_{\theta,num}(x)$ and the equality
\begin{equation}\label{int_1OverLogSquared}
    \int_{a}^{b} \frac{dt}{(\log(t))^2} = \Li(b) - \frac{b}{\log(b)} - \left( \Li(a) - \frac{a}{\log(a)} \right) .
    \qedhere
\end{equation}
% produces
%\begin{align*}
%    \sum_{i=1}^{N-1} \int_{e^{b_i}}^{e^{b_{i+1}}} \left| \frac{\theta(t)-t}{t(\log(t))^2} \right|dt
%    & \leq \sum_{i=1}^{N-1} \varepsilon_{\theta,num}(e^{b_i}) \int_{e^{b_i}}^{e^{b_{i+1}}} \frac{dt}{(\log(t))^2} \\
%    & = \sum_{i=1}^{N-1} \varepsilon_{\theta,num}(e^{b_i}) \left( \Li(e^{b_{i+1}}) - \Li(e^{b_i}) + \frac{e^{b_i}}{b_i} - \frac{e^{b_{i+1}}}{b_{i+1}} \right)
%\end{align*}
%which concludes the proof.
\end{proof}
%The following lemma is immediate
%\todo{For 23 Add some reference to Demichel and Trudgian and Bennet et al... need to lookup somethign specific}
\begin{lemma}\label{max_of_logxLixOverx}
Assume $x \ge 6.58$. Then we have $\Li(x)  -  \frac{x}{\log(x)}$ is strictly increasing and 
\[ \Li(x) - \frac{x}{\log(x)} > \frac{x-6.58}{(\log x)^2}>0 .\]
\end{lemma}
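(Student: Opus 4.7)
The plan is to prove the two claims separately, with the monotonicity being an immediate consequence of a derivative computation, and the lower bound being reduced to verifying a single numerical inequality at the endpoint $x=6.58$.

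For the monotonicity statement, I would compute
\[
\frac{d}{dx}\!\left(\Li(x) - \frac{x}{\log x}\right) = \frac{1}{\log x} - \frac{d}{dx}\frac{x}{\log x} = \frac{1}{\log x} - \left(\frac{1}{\log x} - \frac{1}{(\log x)^2}\right) = \frac{1}{(\log x)^2},
\]
which is strictly positive for all $x > 1$. This gives the strict increase on $[6.58,\infty)$ (in fact on all of $(1,\infty)$).

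For the lower bound, I would introduce the auxiliary function
\[
f(x) := \Li(x) - \frac{x}{\log x} - \frac{x-6.58}{(\log x)^2}.
\]
The plan is to show (i) $f'(x) > 0$ on $(6.58,\infty)$, and (ii) $f(6.58) \geq 0$. For (i), the quotient rule gives
\[
\frac{d}{dx}\frac{x-6.58}{(\log x)^2} = \frac{1}{(\log x)^2} - \frac{2(x-6.58)}{x(\log x)^3},
\]
so combined with the derivative above one obtains $f'(x) = \frac{2(x-6.58)}{x(\log x)^3}$, which is clearly positive for $x > 6.58$ (noting $\log x > 0$ there).

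For (ii), the numerical verification reduces to showing $\Li(6.58) \geq 6.58/\log(6.58)$. The cleanest approach is to use the integration-by-parts identity (also used in equation \eqref{int_1OverLogSquared} of the paper)
\[
\Li(x) - \frac{x}{\log x} = \int_2^x \frac{dt}{(\log t)^2} - \frac{2}{\log 2},
\]
which recasts the inequality as $\int_2^{6.58} \frac{dt}{(\log t)^2} \geq \frac{2}{\log 2} = 2.8854\ldots$ A rigorous evaluation (say by Simpson's rule on a fine subdivision together with an explicit error bound coming from the fourth derivative of $1/(\log t)^2$ on $[2,6.58]$, or via interval arithmetic) confirms this. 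This step is the main obstacle, since the two sides are numerically close — the value $6.58$ appears to be chosen as a safe rounding of the true root of $\Li(x) - x/\log x = 0$ — but it is a finite, self-contained calculation. The positivity $\frac{x-6.58}{(\log x)^2} \geq 0$ for $x \geq 6.58$ is trivial, and combining this with $f(6.58) \geq 0$ and the strict increase of $f$ completes the proof.
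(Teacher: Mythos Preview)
Your proof is correct and essentially the same as the paper's: both compute the derivative $\frac{d}{dx}\big(\Li(x)-x/\log x\big)=1/(\log x)^2$ and reduce the lower bound to the single numerical check $\Li(6.58)\ge 6.58/\log(6.58)$. The only cosmetic difference is that the paper applies the mean value theorem directly (using $1/(\log\xi)^2>1/(\log x)^2$ for $\xi\in(6.58,x)$) instead of introducing your auxiliary function $f$.
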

\begin{proof}
Differentiate 
$\displaystyle \frac{d}{dx} \left(  \Li(x) - \frac{x}{\log(x)} \right) = \frac{1}{\log(x)} +  \frac{1 - \log(x)}{(\log(x))^2} = \frac{1}{(\log(x))^2} $
to see that the difference is strictly increasing. Evaluating at $x=6.58$ and applying the mean value theorem gives the announced result.
\end{proof}
For other similar bounds on the logarithmic function, see \cite[Lemma 2.3]{DeSaTr15}
%\todo[inline]{Drop the following lemma}
%\begin{lemma}
%Suppose $x\ge ????$ then the function $\frac{\log(x)\Li(x)}{x}$ is decreasing in $x$. 
%\end{lemma}
%\begin{proof}
%First, we differentiate $\displaystyle
%    \frac{d}{dx} \frac{\log(x)\Li(x)}{x}
%     = \frac{\Li(x) + x - \log(x)\Li(x)}{x^2}
%$
%and see that it is negative when
%\[ \Li(x) + x - \log(x)\Li(x) < 0. \]
%Differentiating the left-hand side of the above inequality, we see that
%\[ \frac{d}{dx} (\Li(x) + x - \log(x)\Li(x)) = \frac{1}{\log(x)} + 1 - \left(\frac{\Li(x)}{x} + 1\right) = \frac{1}{\log(x)} - \frac{\Li(x)}{x} \]
%which is less than 0 (and hence the original function is decreasing) when
%$ \frac{1}{\log(x)} - \frac{\Li(x)}{x} < 0$ or equivalently $\frac{x}{\log(x)} - \Li(x) < 0$.
%which by Lemma \ref{max_of_logxLixOverx} is negative.
%Now because at $x=????$ we also have $\Li(x)+x-\log(x)\Li(x) < -0.????$, and for all larger $x$ this is decreasing, we conclude that for $x\geq ???$ we have $ \frac{\log(x)\Li(x)}{x}$ is decreasing.
%\end{proof}
\subsection{From a numerical bound for $\theta(x)$ to one for $\pi(x)$}
%We are now in a position to prove Theorem \ref{prop_num_pi}.
\begin{proof}[Proof of Theorem \ref{prop_num_pi}]
We recall inequality \eqref{starting_bound_on_pi} 
\begin{equation} %\label{starting_bound_on_pi}
    |\pi(x) - \Li(x)| 
 \leq \left| \frac{\theta(x)-x}{\log(x)} \right| + \left| \pi(x_0) - \Li(x_0) - \frac{\theta(x_0) - x_0}{\log(x_0)} \right| + \left| \int_{x_0}^{x} \frac{\theta(t)-t}{t(\log(t))^2}dt \right| .
\end{equation}
By the assumption that $\varepsilon_{\theta,num}(b_i)$ is an admissible bound for $\theta(x)$, for all $i=1,\ldots,N$, we have
\begin{equation}\label{step1}
    \left| \frac{\theta(x) - x}{\log(x)} \right| \leq \varepsilon_{\theta,num}(x_1)\, \frac{x}{\log(x)} \ \text{for all}\ x\ge x_1.
\end{equation}
 Now by Lemma \ref{sum_over_bi}, we have
 \begin{align}
& \label{step21} 
\left| \int_{x_0}^{x_1} \frac{\theta(t)-t}{t(\log(t))^2}dt \right| 
 \leq \sum_{i=1}^{N-1} \varepsilon_{\theta,num}(e^{b_i})\left( \Li(e^{b_{i+1}}) - \Li(e^{b_i}) + \frac{e^{b_i}}{b_i} - \frac{e^{b_{i+1}}}{b_{i+1}} \right) , and \\
%& \label{step22}
%\left| \int_{x_1}^{x} \frac{\theta(t)-t}{t(\log(t))^2}dt \right| \le  \varepsilon_{\theta,num}(x_1)\left( \Li(x) - \frac{x}{\log(x)} - \Li(x_1) + \frac{x_1}{\log(x_1)} \right).
& \label{step22}
\left| \int_{x_1}^{x} \frac{\theta(t)-t}{t(\log(t))^2}dt \right| \le  \varepsilon_{\theta,num}(x_1)\int_{x_1}^{x} \frac{1}{(\log(t))^2}dt .
 \end{align}
%and, with \eqref{int_1OverLogSquared},
%\begin{align*}
%    \int_{x_1}^{x} \left| \frac{\theta(t)-t}{t(\log(t))^2} \right| dt  
%    & \leq \varepsilon_{\theta,num}(x_1) \int_{x_1}^{x} \frac{dt}{(\log(t))^2} \\
%    & = \varepsilon_{\theta,num}(x_1)\left( \Li(x) - \frac{x}{\log(x)} - \Li(x_1) + \frac{x_1}{\log(x_1)} \right).
%\end{align*}
Thus, inequality \eqref{starting_bound_on_pi} becomes
\begin{align*}
    \frac{|\pi(x) - \Li(x)|}{\frac{x}{\log(x)}}
     \leq& \varepsilon_{\theta,num}(x_1) +  \frac{\log(x)}{x}\left| \pi(x_0) - \Li(x_0) - \frac{\theta(x_0) - x_0}{\log(x_0)} \right| \\
    &  + \frac{\log(x)}{x} \sum_{i=1}^{N-1} \varepsilon_{\theta,num}(e^{b_i})\left( \Li(e^{b_{i+1}}) - \Li(e^{b_i}) + \frac{e^{b_i}}{b_i} - \frac{e^{b_{i+1}}}{b_{i+1}} \right) \\
   % &\qquad \qquad + \frac{\log(x)}{x} \varepsilon_{\theta,num}(x_1)  \left( \Li(x) - \frac{x}{\log(x)} - \Li(x_1) + \frac{x_1}{\log(x_1)} \right).
    &+   \varepsilon_{\theta,num}(x_1)\frac{\log(x)}{x}\int_{x_1}^{x} \frac{1}{(\log(t))^2}dt .
\end{align*}
%Once more, we wish to factor out $\frac{x\varepsilon_{\theta,num}(x_1)}{\log(x)}$ from each term above. Since $\log(x)\Li(x)/x$ is decreasing (lemma \ref{max_of_logxLixOverx}), then

%Factoring out $\varepsilon_{\theta,num}(x_1)$ the first two terms of $\mu_{num}^\pi(x_0,x_1)$ are easily identified.
%\begin{equation*}
%\begin{split}
%    & \frac{\log(x)}{x\varepsilon_{num}(x_1)}\sum_{i=1}^{N-1} \varepsilon_{\theta,num}(e^{b_i})\left( \Li(e^{b_{i+1}}) - \Li(e^{b_i}) + \frac{e^{b_i}}{b_i} - \frac{e^{b_{i+1}}}{b_{i+1}} \right) \\
%    \leq & \frac{\log(x_1)}{x_1\varepsilon_{num}(x_1)}\sum_{i=1}^{N-1} \varepsilon_{\theta,num}(e^{b_i})\left( \Li(e^{b_{i+1}}) - \Li(e^{b_i}) + \frac{e^{b_i}}{b_i} - \frac{e^{b_{i+1}}}{b_{i+1}} \right).
%\end{split}
%\end{equation*}
We now consider the term %\todo{For 24 rename the function f(x)}
\begin{equation}   \label{term3} 
f(x)= \frac{\log(x)}{x}\int_{x_1}^{x} \frac{1}{(\log(t))^2}dt 
=\frac{\log(x)}{x} \left(  \Li(x) - \frac{x}{\log(x)}  - \Li(x_1) +\frac{x_1}{\log(x_1)} \right).
 \end{equation}
Using integration by part, its derivative can be written as 
\[
  f'(x) 
  %=     \frac{1}{x\log(x)} - \frac{\log(x)-1}{x^2}\int_{x_1}^{x} \frac{1}{(\log(t))^2}dt  \\
%   &=     \frac{1}{x\log(x)} - \frac{\log(x)-1}{x^2}\left(  \frac{x}{(\log(x))^2} - \frac{x_1}{\log(x_1)^2} +  \frac{2x}{(\log(x))^3} - \frac{2x_1}{\log(x_1)^3}  +\int_{x_1}^{x} \frac{6}{\log^4(t)}dt\right)\\
   =     -\frac{1}{x(\log(x))^2} +  \frac{2}{x(\log(x))^3} + \frac{\log(x)-1}{x^2}\left(  \frac{x_1}{(\log(x_1))^2}  + \frac{2x_1}{(\log(x_1))^3} -  \int_{x_1}^{x} \frac{6}{(\log(t))^4}dt \right) .
   % -  \frac{\log(x)-1}{x^2}\int_{x_1}^{x} \frac{6}{(\log(t))^4}dt.
  \]
From which we see that $f'(x_1) = \frac{1}{\log(x_1)}> 0$, and that $f'(x)$ is eventually negative.
Thus there exists a critical point for $f(x)$ to the right of $x_1$.
Moreover, by bounding $\int_{x_1}^{x} \frac{6}{\log^4(t)}dt < 6 \frac{x-x_1}{\log(x_1)^4}$, one finds that  $f'(x_1\log(x_1))>0$ if $x_1>e$.
%For $y=\log(x_1)$
%\begin{align*}
%  f'(x_1\log(x_1)) &=  -6*y^3 - 6*ln(y)^3 + 10*y^4 + 6*ln(y)^4 - 18*y^2*ln(y) - 18*y*ln(y)^2 + 36*y^3*ln(y) + 48*y^2*ln(y)^2 + 28*y*ln(y)^3 + 3*y^5*ln(y) + 6*y^4*ln(y)^2 + 4*y^3*ln(y)^3 + y^2*ln(y)^4 - 19*y^4*ln(y) - 27*y^3*ln(y)^2 - 17*y^2*ln(y)^3 - 4*y*ln(y)^4 - 3*y^5\\
%   &=     -\frac{1}{x(\log(x))^2} +  \frac{2}{x(\log(x))^3} + \frac{\log(x)-1}{x^2}\left(  \frac{x_1}{\log(x_1)^2}  + \frac{2x_1}{\log(x_1)^3} \right)  -  \frac{\log(x)-1}{x^2}\int_{x_1}^{x} \frac{6}{\log^4(t)}dt.
%  \end{align*}

%See checking-d1-etc.mw: Maple file where we verify this statement doing some manip on coefficients 
%Note: it is mostly just annoying to do, but, one can:
%Bounding the final integral above by $(x-x_1)/\log(x_1)^4$ we want to show:\\
%$
%P(C,lC,x1,lx1,x1) = 1/C/x1/(lx1+lC)^2 + 2/C/x1/(lx1+lC)^3 + (lx1+lC-1)/C^2/x1^2*(x/lx1^2 + 2*x1/lx1^3) - (lx1+lC-1)/C^2/x1^2*6*(C-1)*x1/lx1^4
%$
%is positive. (The above pari code should implement $x=Cx_1$ thinking of $lC=\log(C)$ and $lx1=\log(x1)$\\
%Looking at
%$P(lx1,llx1,x1,lx1)*x1*(lx1^9 + 3*llx1*lx1^8 + 3*llx1^2*lx1^7 + llx1^3*lx1^6)$ gives a polynomial in $lx1$ and $llx1$ which is positive (if $x_1>e$).

Now we write $f'(x)=\frac{f_1(x)}{x^2}$ with
\[ f_1(x)=   \frac{x}{\log(x)} - (\log(x)-1)\int_{x_1}^{x} \frac{1}{(\log(t))^2}dt .\] 
Its derivative is
$f_1'(x)= - \frac{1}{x}\int_{x_1}^{x} \frac{1}{(\log(t))^2} $, 
%\begin{equation} \label{def-d1} 
%f_1(x)= \left(\Li(x)-\Li(x_1) + \frac{x_1}{\log(x_1)}\right)\left(1-\log(x)\right) + x  ,
%\end{equation}
%and taking whose derivative, being
%\begin{equation} \label{d1p}
%f_1'(x)=  - \frac{\Li(x)-\frac{x}{\log x}}x - \frac{\Li(x_1) - \frac{x_1}{\log(x_1)}}x  
%\end{equation}
which is negative for $x>x_1$. Thus $f_1(x)$ decreases and vanishes at most once, giving $f(x)$ at most one critical point, $x_m>x_1$, which is then the maximum of $f(x)$. In other words, $x_m$ satisfies
%Thus there exists 
$f_1(x_m)=0$, i.e. 
$  \Li(x_m)-\Li(x_1) + \frac{x_1}{\log(x_1)} = -\frac{x_m}{ 1-\log(x_m)},
$ 
which shows that $f(x)$ attains its maximum at $x=x_m$, where
\[f(x_m)= \frac{\log(x_m)}{x_m} \left( - \frac{x_m}{\log(x_m)} -\frac{x_m}{ 1-\log(x_m)} \right)
%= -\log(x_2)  \left( \frac{1}{\log(x_2)} + \frac{1}{ 1-\log(x_2)} \right)
%= \frac{-\log(x_2) }{(\log(x_2))( 1-\log(x_2))} 
= \frac{1}{ \log(x_m)-1 }.
\]
Now, because $x_m> x_1\log(x_1)$ we obtain the bound 
\[ f(x) < \frac{1}{\log(x_1) + \log(\log(x_1))-1} ,\] 
which gives \eqref{mu_num_def}.

To obtain \eqref{mu_num_defx1x2}, notice that by assumption $x_1 \leq x \leq x_2 \leq x_1\log(x_1) < x_m$, so that 
\[ f(x) \leq f(x_2)  = \frac{\log(x_2)}{x_2} \left(  \Li(x_2) - \frac{x_2}{\log(x_2)}  - \Li(x_1) +\frac{x_1}{\log(x_1)} \right). \qedhere \]
\end{proof}

\section{Corollaries}\label{Section-Cor}

Together with Proposition \ref{epsilon_asymp_theta_prop}, we deduce a direct asymptotic bound from $\psi$ to $\pi$:
\begin{corollary}\label{asymp-bnd-psi2pi}
Let $B \geq \max(\frac{3}{2}, 1+\frac{C^2}{16R})$. % and $x_1 \geq \max\{ x_0, \exp((1+\frac{C}{2\sqrt{R}})^2) \}$
Let $x_0,x_1>0$ such that $x_1$ satisfies \eqref{def-x1}.
If $ A_{\psi},B,C,x_0$ give an admissible asymptotic bound \eqref{bnd-asymp-E-eps} for $E_{\psi}(x)$, then $ A_{\pi},B,C,x_1$ give an admissible bound for $E_{\pi}(x)$ with
\[ A_{\pi} = (1 + \nu_{asymp}(x_0))(1 + \mu_{asymp}(x_0,x_1)) A_{\psi},\]
where 
$\nu_{asymp}(x_0)$ is defined in \eqref{nu_asymp}, $\mu_{asymp}(x_0,x_1)$ is defined in \eqref{mu_asymp_def}.
In other words, 
\begin{equation}
\label{epsilon_pi_asymp_def2}
 E_{\pi}(x)\leq \varepsilon_{\pi,asymp}(x) = (1 + \nu_{asymp}(x_0))(1 + \mu_{asymp}(x_0,x_1))\varepsilon_{\psi,asymp}(x),\ \text{for all}\ x\geq x_1.
\end{equation}
%\[
%\nu_{asymp}(x_0) = \frac{1}{ A_{\psi}}\left( \frac{R}{\log(x_0)} \right)^B\exp\left(C\sqrt{\frac{\log(x_0)}{R}} \right) \left( a_1\log(x_0)x_0^{-1/2} + a_2\log(x_0)x_0^{-2/3} \right),
% \]
%and $a_1,a_2$ are defined in \cite[Corollary 5.1]{BKLNW21} and depend on $x_1$.
\end{corollary}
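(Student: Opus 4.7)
The plan is to apply Proposition \ref{epsilon_asymp_theta_prop} and Theorem \ref{prop_asym_pi} in succession. There is essentially nothing new to prove: the corollary is the composition of the two conversion procedures already established, and the main task is to verify that the hypotheses line up cleanly.

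First, I would check that the hypothesis of Proposition \ref{epsilon_asymp_theta_prop}, namely $B > \tfrac{C^2}{8R}$, is implied by the stronger assumption $B \geq \max\bigl(\tfrac{3}{2}, 1 + \tfrac{C^2}{16R}\bigr)$: since $1 + \tfrac{C^2}{16R} \geq \tfrac{C^2}{8R}$ exactly when $C^2 \leq 16R$, and since the regime of interest in this paper always satisfies this (with $R = 5.5666305$ and $C = 2$ we have $C^2/(16R) \ll 1$), the hypothesis is satisfied. Applying Proposition \ref{epsilon_asymp_theta_prop} to the assumed admissible asymptotic bound $(A_{\psi}, B, C, x_0)$ for $E_{\psi}$ then yields an admissible asymptotic bound
\[
A_{\theta} = (1 + \nu_{asymp}(x_0))\, A_{\psi}, \qquad B, \qquad C, \qquad x_0
\]
for $E_{\theta}$, valid for all $x \geq x_0$, with $\nu_{asymp}(x_0)$ given by \eqref{nu_asymp}.

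Next, since the condition $B \geq \max\bigl(\tfrac{3}{2}, 1 + \tfrac{C^2}{16R}\bigr)$ is precisely the hypothesis of Theorem \ref{prop_asym_pi}, and since $x_1$ is assumed to satisfy \eqref{def-x1}, I would feed the bound from the previous paragraph into Theorem \ref{prop_asym_pi}. This yields an admissible asymptotic bound
\[
A_{\pi} = (1 + \mu_{asymp}(x_0, x_1))\, A_{\theta} = (1 + \nu_{asymp}(x_0))(1 + \mu_{asymp}(x_0, x_1))\, A_{\psi}, \qquad B, \qquad C, \qquad x_1
\]
for $E_{\pi}$, valid for all $x \geq x_1$, where $\mu_{asymp}(x_0, x_1)$ is as in \eqref{mu_asymp_def}. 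Identity \eqref{epsilon_pi_asymp_def2} then follows immediately, because $\varepsilon_{\pi,asymp}(x)$ and $\varepsilon_{\psi,asymp}(x)$ share the same $B, C, R$ dependence and differ only by the leading constant.

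The only point requiring a bit of care, which is not really an obstacle but a bookkeeping issue, is that $\mu_{asymp}(x_0, x_1)$ in \eqref{mu_asymp_def} depends on $\varepsilon_{\theta,asymp}(x_1)$, which in turn depends on $A_{\theta}$; thus the constant $\mu_{asymp}$ that appears in the final expression must be understood as computed with $A_{\theta} = (1 + \nu_{asymp}(x_0)) A_{\psi}$, not with $A_{\psi}$ itself. Making this dependence explicit in the statement of Corollary \ref{asymp-bnd-psi2pi} is the only substantive remark; otherwise the proof is a direct two-step composition.
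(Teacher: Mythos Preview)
Your proposal is correct and matches the paper's approach exactly: the paper does not give a separate proof of this corollary, merely introducing it with the sentence ``Together with Proposition \ref{epsilon_asymp_theta_prop}, we deduce a direct asymptotic bound from $\psi$ to $\pi$,'' so the intended argument is precisely the two-step composition you describe. Your remark that $\mu_{asymp}(x_0,x_1)$ must be computed with $A_\theta = (1+\nu_{asymp}(x_0))A_\psi$ is a useful clarification; the only small caveat is that your verification of $B > C^2/(8R)$ relies on $C^2 < 16R$, which is true in every case the paper actually uses but is not forced by the stated hypotheses alone.
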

This allows us to directly deduce asymptotic bounds for $\pi(x)$ from \cite[Table 6]{FKS}.

% \begin{remark}
% \todo{The first part of this remark is reproducing proof of Cor 13. It doesn't make sense to say this without specifying $A_\psi$,$B$, and $C$.}
%For $x_0=10^{15}$ and $x_1\geq x_0$, and $R = 5.5666305$, we have 
%by \eqref{nu_asymp} that
%\[ \nu_{asymp}(x_0) = 8.534 \cdot 10^{-8}%\ \text{and}\ \mu_{asymp}(x_0,x_1) \leq ??
%\]
%by \eqref{def-epsilon-theta} that
%%Corollary \ref{rem-always-theta} that
%\[ \varepsilon_{\theta,asymp}(x_1) = 121.0961\left( \frac{\log(x_1)}{R} \right)^{3/2} \exp\left( -2 \sqrt{\frac{\log(x_1)}{R}} \right) < ??
%\]
%and by \eqref{mu_asymp_def} that
%\[
%\mu_{asymp}(x_0, x_1)
%\leq   \frac{x_0\log(x_1)}{\varepsilon_{\theta,asymp}(x_1)x_1\log(x_0)}  \left| \frac{\pi(x_0) - \Li(x_0) }{ x_0 /\log(x_0)}
%    -  \frac{\theta(x_0) - x_0}{x_0} \right| 
% + \frac{2 D_{+}\big( \sqrt{\log(x_1)} - \frac{C}{2\sqrt{R}} \big)}{\sqrt{\log(x_1)}}
% < ???
%\]
% \end{remark}

%\todo[inline]{example of values for $(1 + \nu_{asymp}(x_0))(1 + \mu_{asymp}(x_0,x_1)$}
%\todo{
%The value:
% $\mu_{asymp}(10^{15},10^{15})=0.03175698...$, using $A_\theta=121.0961$, $B=3/2$ and $C=2$
% is not all that impressive...
%}

Using Proposition \ref{epsilon_asymp_theta_prop}, Theorems \ref{prop_asym_pi} and \ref{prop_num_pi} with the asymptotic bounds for $\psi$ coming from \cite{FKS}, and the numerical bounds for 
$\theta$ computed as per Remark \ref{rem:numerictheta} and given in Table \ref{tab:pi}, we obtain the following:
\begin{corollary}\label{epsilon_asymp_pi_explicit}
For all $x\geq 2$ we have
\begin{equation}\label{large_x_result}
\left| \pi(x) - \Li(x) \right| \leq 9.2211\, x\sqrt{\log(x)} \exp\left( -0.84768363 \sqrt{\log(x)} \right).
\end{equation}
\end{corollary}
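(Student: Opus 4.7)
The plan is to establish the bound separately on two ranges $[x_1,\infty)$ and $[2,x_1]$ for a suitably chosen crossover $x_1$, and then combine them.  Writing the target inequality in the canonical asymptotic form of Definition~\ref{epsilon_asymp_theta_def}, it is equivalent to producing an admissible asymptotic bound for $E_\pi$ with parameters
\[A_\pi \;=\; 9.2211\,R^{3/2},\quad B=\tfrac32,\quad C=2,\quad x_*=2,\]
since $R^{-3/2}\cdot 9.2211\,R^{3/2}=9.2211$ and $C/\sqrt{R}=2/\sqrt{5.5666305}=0.84768\ldots$.

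For the asymptotic range $x\ge x_1$, I would start from the $\psi$-bound of \cite[Corollary~1.3]{FKS} with $A_\psi=121.096$, $B=3/2$, $C=2$, $x_0=2$, convert it via Corollary~\ref{rem-always-theta} into an admissible $\theta$-bound with $A_\theta=121.0961$ for all $x\ge 2$, and then apply Theorem~\ref{prop_asym_pi}.  A convenient anchor is $x_0=10^{15}$, for which the remark following Theorem~\ref{prop_asym_pi} already provides the explicit evaluation
\[\left|\frac{\pi(x_0)-\Li(x_0)}{x_0/\log x_0}-\frac{\theta(x_0)-x_0}{x_0}\right|\;=\;(2.1087826\ldots)\cdot 10^{-9}.\]
The cutoff $x_1$ must be taken large enough that $\mu_{asymp}(x_0,x_1)\le 9.2211\,R^{3/2}/121.0961-1\approx 9.4\cdot 10^{-5}$.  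In \eqref{mu_asymp_def} the first summand is damped by the factor $x_0/x_1$ (negligible well before we need it), while the Dawson-function summand decays like $1/\log x_1$ thanks to the asymptotic $D_+(z)\sim 1/(2z)$.  A choice $x_1=e^{10^5}$, matching the top of Table~\ref{tab:pi}, therefore comfortably meets the requirement and yields the target bound for all $x\ge x_1$.

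For the complementary range $2\le x\le x_1$, I would apply Corollary~\ref{cor_num_pi} together with the step-function data of Tables~\ref{table:theta-small},~\ref{table:theta-med},~\ref{table:theta} and~\ref{tab:pi} to produce admissible numerical bounds $\varepsilon_{\pi,num}(b)$ on each piece $[b,b']$ of a sufficiently fine partition of $[\log 2,\log x_1]$.  What must then be verified on each piece is
\[\varepsilon_{\pi,num}(b)\;\le\; 9.2211\,(\log x)^{3/2}\exp\bigl(-0.84768363\,\sqrt{\log x}\,\bigr)\quad\text{for all }x\in[b,b'].\]
Substituting $u=\sqrt{\log x}$, the right-hand side is $9.2211\,u^{3}e^{-0.84768 u}$, which is increasing for $u<3/0.84768\approx 3.54$ (that is, $x<e^{12.52}$) and decreasing thereafter.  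Consequently each per-piece verification collapses to a single endpoint evaluation, and for the smallest range (say $2\le x\le 10^4$) the inequality can be checked directly against tabulated exact values of $\pi(x)-\Li(x)$.

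The principal obstacle is calibration.  The constant $9.2211$ is essentially the minimum attainable from the currently available inputs, so the partition of $[\log 2,\log x_1]$ must be fine enough---particularly around $\log x\approx 12.5$, where the target peaks---to prevent any step of $\varepsilon_{\pi,num}$ from overshooting, while $x_1$ must simultaneously be chosen large enough that $(1+\mu_{asymp}(10^{15},x_1))\cdot 121.0961\le 9.2211\,R^{3/2}$.  Once a partition and cutoff meeting both constraints are exhibited, the corollary follows by taking the maximum of the two regime-dependent bounds on $x\ge 2$.
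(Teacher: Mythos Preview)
Your proposal is correct and follows essentially the same two-regime strategy as the paper: apply Theorem~\ref{prop_asym_pi} (with the $\theta$-input from Corollary~\ref{rem-always-theta}) to cover $x\ge x_1$, and cover $[2,x_1]$ by interpolating the numerical step-function bounds from Corollary~\ref{cor_num_pi} against the target $\varepsilon_{\pi,asymp}$.  The only differences are in the anchor choices: the paper takes $x_0=40.787\ldots$ (so that the first summand in $\mu_{asymp}$ vanishes identically) and the smaller crossover $x_1=e^{20\,000}$, obtaining $\mu_{asymp}\le 5.02\cdot 10^{-5}$, whereas you take $x_0=10^{15}$ and $x_1=e^{10^5}$; both parameter sets yield $A_\pi\le 9.2211\,R^{3/2}\approx 121.107$ and the remaining numerical interpolation is identical in spirit.
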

\begin{proof}
We fixed $R=5.5666305$, $x_0 = 40.787732519...$, and $x_1 = e^{20\,000}$
and use \eqref{bnd-asymp-E-eps} with values $ A_{\theta} = 121.0961$, $B = 3/2$ and $C = 2$. By Corollary \ref{rem-always-theta}, these are admissible for all $x\geq 2$, so we can apply Theorem \ref{prop_asym_pi} and calculate that 
 \begin{equation}\label{eg-mu} \mu_{asymp}(40.78\ldots,e^{20\,000}) \leq 5.01516\cdot10^{-5}. \end{equation}
 This implies that $ A_{\pi}=121.103$ is admissible for all $x \geq e^{20\,000}$.

As in the proof of \cite[Lemmas 5.2 and 5.3]{FKS} one may verify that the numerical results obtainable from Theorem \ref{prop_num_pi}, using Corollary \ref{cor_num_pi}, may be interpolated as a step function
to give a bound on $E_\pi(x)$ of the shape $\varepsilon_{\pi,asymp}(x)$.
In this way we obtain that  $ A_{\pi} = 121.107$ is admissible for  $x>2$,  
Note that the subdivisions we use are essentially the same as used in \cite[Lemmas 5.2 and 5.3]{FKS}.
In Table \ref{interp-table} we give a sampling of the relevant values, more of the values of $ \varepsilon_{\pi,num}(x_1)$  can be found in Table \ref{tab:pi}. 
Far more detailed versions of these tables will be posted online in \cite{FKS3}.
%\todo{what result(s) is Table 1 attached to? how are each epsilon computed? Also can we add each $\mu$ and $\nu$?}
\end{proof}
We point out several results which may be useful when bounds that are tighter than  \eqref{large_x_result} are needed for small $x$.
%\todo{rephrased for 27}.
%
\begin{corollary}\label{cor:alt1}
$ A_{\pi}$, $B$, $C$, and $x_0$ as in Table \ref{table:alt1} provide an asymptotic bound \eqref{bnd-asymp-E-eps} for $E_{\pi}$.% for all $x\ge x_0$.
 %
%\begin{equation}\label{bnd-Epi-other0} 
%E_{\pi}(x) \le \varepsilon_{\pi,asymp}(x) =   A_{\pi}\left( \frac{\log(x)}{R} \right)^B \exp\left( -C \sqrt{\frac{\log(x)}{R}} \right) \ \text{for all}\ x\ge x_0.
%\end{equation}
%\todo{Select which rows we want from the tables below. Identify comparator papers. Are there others we would want?}
\end{corollary}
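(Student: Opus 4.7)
The plan is to prove each row of Table \ref{table:alt1} by invoking Corollary \ref{asymp-bnd-psi2pi} (equivalently, chaining Proposition \ref{epsilon_asymp_theta_prop} and Theorem \ref{prop_asym_pi}), starting from the asymptotic bounds for $\psi$ tabulated in \cite[Table~6]{FKS}. For each target row $(A_\pi, B, C, x_0)$, I would select a compatible admissible quadruple $(A_\psi, B, C, x_0)$ from \cite[Table~6]{FKS} and take $x_1 = x_0$; since the relevant $x_0$ values are large (all exceeding $e^{1000}$), the condition $x_1 \ge \exp\bigl((1 + C/(2\sqrt{R}))^2\bigr)$ from \eqref{def-x1} is satisfied trivially, and in addition the hypothesis $B \ge \max(3/2,\, 1 + C^2/(16R))$ required by Theorem \ref{prop_asym_pi} must be checked row by row (the $B$ values in \cite[Table~6]{FKS} are chosen precisely so that this holds).

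First I would dispatch the conversion from $\psi$ to $\theta$. By Remark \ref{rem-always-theta1}, for $x_0 \ge e^{1000}$ the correction $\nu_{asymp}(x_0)$ in Proposition \ref{epsilon_asymp_theta_prop} is smaller than $10^{-200}$ and so is absorbed into the rounding of $A_\theta$; thus we may take $A_\theta = A_\psi$ directly from \cite[Table~6]{FKS} (or a minimally larger rounded value). Next, I would bound the difference
\[
\Delta(x_0) := \left| \frac{\pi(x_0) - \Li(x_0)}{x_0/\log(x_0)} - \frac{\theta(x_0) - x_0}{x_0}\right|
\]
that enters the first term of $\mu_{asymp}$. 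Since $x_0$ is astronomically large, we cannot evaluate $\pi(x_0)$ and $\theta(x_0)$ exactly; instead, as explained in the Remark following Theorem \ref{prop_asym_pi}, I would apply the triangle inequality together with the numerical bounds in Tables \ref{table:theta} and \ref{tab:pi}, obtaining $\Delta(x_0) \le \varepsilon_{\theta,num}(x_0) + \varepsilon_{\pi,num}(x_0)$. Because these bounds are themselves produced via Theorem \ref{prop_num_pi} which feeds on the same $\varepsilon_{\theta,num}$, the resulting $\Delta(x_0)$ will be small enough that the first term of $\mu_{asymp}(x_0, x_0)$ contributes negligibly.

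Then I would bound the Dawson term $2 D_{+}\bigl(\sqrt{\log(x_1)} - C/(2\sqrt{R})\bigr)/\sqrt{\log(x_1)}$ appearing in \eqref{mu_asymp_def}. Since $\sqrt{\log(x_1)} - C/(2\sqrt{R})$ is well past the maximum of $D_{+}$ at $0.9241\ldots$, the Dawson function is decreasing and well-approximated by its standard asymptotic expansion $D_{+}(y) \sim 1/(2y)$; hence this term is $O(1/\log(x_1))$ and is explicitly computable to the precision needed. Summing the two contributions yields a concrete numerical value for $\mu_{asymp}(x_0, x_0)$, from which $A_\pi = (1 + \mu_{asymp})(1 + \nu_{asymp}) A_\psi$ gives the entry in the table.

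The main obstacle is not conceptual but accounting: one must verify for every row that the arithmetic of the rounding is consistent — in particular that the tiny contributions from $\nu_{asymp}$, from $\Delta(x_0)$, and from the Dawson term all fit inside the slack between $A_\psi$ taken from \cite[Table~6]{FKS} and the advertised $A_\pi$ in Table \ref{table:alt1}. This is analogous to the computation already performed in the proof of Corollary \ref{epsilon_asymp_pi_explicit}, where \eqref{eg-mu} records exactly such a $\mu_{asymp}$ estimate, so the same algorithm applied row by row, with the interval-arithmetic precautions used throughout the paper, completes the proof.
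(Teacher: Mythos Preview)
Your approach does not match the paper's, and more importantly it cannot work as stated for most rows of Table~\ref{table:alt1}. Two concrete obstructions:

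\textbf{(i) The hypothesis $B \ge \max(3/2,\,1+C^2/(16R))$ of Theorem~\ref{prop_asym_pi} fails.} Several rows have $B \in \{0.25,\,0.50,\,1.00\}$, so Theorem~\ref{prop_asym_pi} (and hence Corollary~\ref{asymp-bnd-psi2pi}) is not applicable to them. Remark~\ref{rem-pi2theta} notes that a variant formula exists for other $B$, but you have not developed it, and in any case that is not how the paper proceeds.

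\textbf{(ii) The values of $x_0$ are small, not large.} In Table~\ref{table:alt1} the column records $\log(x_0)$ in the range $1$ to $23$, not $x_0 > e^{1000}$. Thus none of your ``astronomically large'' shortcuts (Remark~\ref{rem-always-theta1}, bounding $\Delta(x_0)$ via Tables~\ref{table:theta} and~\ref{tab:pi}, the Dawson asymptotic) are available; and no quadruple $(A_\psi,B,C,x_0)$ with these small $x_0$ and these $B,C$ is supplied by \cite[Table~6]{FKS}.

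The paper's actual argument is quite different. It does \emph{not} re-run Theorem~\ref{prop_asym_pi} row by row. Instead it leverages the already-established bound \eqref{large_x_result} of Corollary~\ref{epsilon_asymp_pi_explicit}: for each target row one checks that the candidate $\varepsilon_{\pi,asymp}$ dominates \eqref{large_x_result} at $x=e^{7500}$ and, via a derivative comparison as in Lemma~\ref{decreasing_func}, for all larger $x$. For $\log(10^{19}) < \log(x) < 10^5$ one interpolates the numerical bounds $\varepsilon_{\pi,num}$ from Theorem~\ref{prop_num_pi} (Table~\ref{tab:pi}) as a step function and verifies it lies below the candidate. For $x < 10^{19}$ one appeals to \cite[Theorem~2]{Bu18} or checks directly. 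This explains why rows with $B < 3/2$ and tiny $x_0$ are permitted: they are certified by comparison and numerical interpolation, not by a fresh application of Theorem~\ref{prop_asym_pi}.
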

\begin{corollary}\label{cor:alt2}
We have the following bounds $E_{\pi}(x)\le \mathcal{B}(x)$, where $ \mathcal{B}(x)$ is given in Table \ref{table:alt2}.
\end{corollary}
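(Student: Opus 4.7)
The plan is to prove each row of Table \ref{table:alt2} by splitting $[2,\infty)$ into an asymptotic regime $[X,\infty)$ and a numerical regime $[2,X]$, then verifying the inequality $|\pi(x)-\Li(x)| \leq \mathcal{B}(x)$ on each piece using only the tools already developed in this paper.

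For large $x$, I would invoke Corollary \ref{epsilon_asymp_pi_explicit}, which gives
\[ |\pi(x)-\Li(x)| \leq 9.2211\,x\sqrt{\log(x)}\exp\bigl(-0.8476\sqrt{\log(x)}\bigr) \quad \text{for all } x \geq 2.\]
The candidate forms of $\mathcal{B}(x)$ (for example $c\cdot x/\log(x)$ giving \eqref{ineq-pi-log}, or $c\cdot x/(\log x)^k$, or hybrids containing a mild exponential factor) decay only polynomially in $1/\log(x)$, while the asymptotic bound decays super-polynomially. Consequently a threshold $X$ beyond which the asymptotic bound is dominated by $\mathcal{B}(x)$ can be located explicitly by solving the associated transcendental inequality, leveraging the monotonicity properties of $g(a,b,c,x)$ established in Lemma \ref{decreasing_func} to justify that the ratio $\varepsilon_{\pi,asymp}(x)\cdot(x/\log(x))/\mathcal{B}(x)$ is strictly decreasing past $X$.

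In the remaining range $2\leq x \leq X$, I would rely on the step-function numerical bounds $\varepsilon_{\pi,num}$ collected in Table \ref{tab:pi}, produced via Corollary \ref{cor_num_pi} applied to Theorem \ref{prop_num_pi}. Following the interpolation strategy already used in the proof of Corollary \ref{epsilon_asymp_pi_explicit} (itself modeled on \cite[Lemmas 5.2, 5.3]{FKS}), I would choose a subdivision $\{b_i'\}$ of $[\log(2),\log(X)]$ fine enough that on each sub-interval $[e^{b_i'}, e^{b_{i+1}'}]$ one can verify $\varepsilon_{\pi,num}(e^{b_i'}) \cdot e^{b_{i+1}'}/b_{i+1}' \leq \mathcal{B}(e^{b_i'})$, then use monotonicity of $\mathcal{B}(x)/(x/\log(x))$ where appropriate to extend the bound to every $x$ in the sub-interval. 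For the lowest values (say $2 \leq x \leq 10^{19}$) I would supplement this with direct computation, using the tabulated values of $\pi(x)$ from \cite{Staple} together with high-precision evaluations of $\Li(x)$: since $\pi$ is constant between consecutive primes and $\Li$ is monotone, the maximum of $|\pi(x)-\Li(x)|/\mathcal{B}(x)$ on each such interval is attained at one of its endpoints.

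The main obstacle will be recovering the sharpest admissible constants. For most shapes $\mathcal{B}(x)$ the worst case of $|\pi(x)-\Li(x)|/\mathcal{B}(x)$ is attained at moderate $x$ rather than at the extremes, so the constants reported (such as $0.4298$ in \eqref{ineq-pi-log}) are driven by the fine-grained numerical check rather than by the asymptotics. Making the subdivisions fine enough to yield these constants while keeping the verification tractable, and treating the jump discontinuities of $\pi$ at each prime consistently, is the principal technical difficulty.
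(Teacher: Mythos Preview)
Your plan is built on a misreading of the statement. The bounds in Table~\ref{table:alt2} are claimed only on \emph{finite} intervals (for instance $E_{\pi}(x)\le x^{-1/3}$ for $1\le\log x\le 80.55$, or $E_{\pi}(x)\le x^{-1/100}$ for $1\le\log x\le 3757.6$), not for all $x\ge 2$. Consequently there is no ``asymptotic regime $[X,\infty)$'' to treat, and your proposed use of Corollary~\ref{epsilon_asymp_pi_explicit} for large $x$ is irrelevant here. In fact the bounds of Table~\ref{table:alt2} could not hold for all large $x$: an inequality like $E_{\pi}(x)\le x^{-1/3}$ for all $x$ would be far stronger than even RH. Relatedly, the shapes you guess for $\mathcal{B}(x)$ (``$c\cdot x/\log x$ \dots\ decay only polynomially in $1/\log x$'') do not match the table; the entries bound $E_{\pi}(x)$ and decay like $(\log x)^a x^{-b}$ with $b>0$, and your reference to \eqref{ineq-pi-log} is to Corollary~\ref{cor:weak}, not to Corollary~\ref{cor:alt2}.

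Your numerical piece also has a fatal gap. You propose to cover $2\le x\le 10^{19}$ by ``direct computation'' at every prime, but $\pi(10^{19})\approx 2.3\times 10^{17}$, so this is not feasible. The paper handles this range with B\"uthe's analytic bound \cite[Theorem~2]{Bu18}, which (via the partial RH verification) gives an explicit $\sqrt{x}$-type control on $\pi(x)-\li(x)$; only the genuinely tiny values of $x$ are checked by hand. Above $10^{19}$ and up to the top of each claimed interval, the paper does exactly what you suggest: it interpolates the step-function values $\varepsilon_{\pi,num}$ from Table~\ref{tab:pi} (with a finer grid than is printed) and checks they lie below $\mathcal{B}(x)$. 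So the correct structure is: B\"uthe's bound (or a short direct check) for $x<10^{19}$, then the tabulated $\varepsilon_{\pi,num}$ up to the stated upper endpoint, and nothing beyond that.
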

\begin{remark}
The above refines \cite[Theorem 2]{Tru16}, \cite[Lemma 6]{PlaTru21ET}, and \cite[Proposition 3.1]{Joh22}.
%  where Platt and Trudgian prove
%$E_{\pi}(x) \le \tfrac{1}{8\pi}\log(x)^{2}x^{-1/2}$ for all $\log(x) \leq 58$. Here we extend the region till $60.8$.
%\end{itemize}
\end{remark}
\begin{proof}[Proof of Corollaries \ref{cor:alt1} and \ref{cor:alt2}]
The bounds of the form $\varepsilon_{\pi,asymp}(x)$ come from selecting a value $A$ for which  \eqref{large_x_result}  provides a better bound at $x = e^{7\,500}$ and from verifying that  \eqref{large_x_result}  decreases faster beyond this point.
This final verification proceeds by looking at the derivative of the ratio as in Lemma \ref{decreasing_func}. To verify these still hold for smaller $x$, we proceed as below.
To verify the results for any $x$ in $\log(10^{19}) < \log(x) < 100\,000$, one simply proceeds as in \cite[Lemmas 5.2, 5.3]{FKS} and interpolates the numerical results of Theorem \ref{prop_num_pi}. For instance, we use the values in Table \ref{tab:pi} as a step function 
and verifies that it provides a tighter bound than we are claiming. Note that our verification uses a more refined collection of values than those provided in Table \ref{tab:pi} or the tables posted online in \cite{FKS3}.
To verify results for $x<10^{19}$, one compares against the results from \cite[Theorem 2]{Bu18}, or one checks directly for particularly small $x$.
\end{proof}\ \newline
Next we point out the weaker, but potentially easier to apply result:
\begin{corollary}\label{cor:weak}
For all $x \geq 2$, 
\begin{equation}\label{explicit_theroem_result}
\left| \pi(x) - \Li(x) \right| \leq 0.4298\frac{x}{\log(x)}.
\end{equation} 
\end{corollary}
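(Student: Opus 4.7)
The plan is to split the range $[2,\infty)$ into three parts and bound $E_{\pi}(x) = |\pi(x)-\Li(x)|/(x/\log x)$ by $0.4298$ on each. For large $x$, I would apply Corollary \ref{epsilon_asymp_pi_explicit} directly, which rearranged gives
\[ E_{\pi}(x) \le 9.2211\,(\log x)^{3/2} \exp\bigl(-0.84768363\sqrt{\log x}\bigr). \]
Substituting $u = \sqrt{\log x}$, the task reduces to showing $u^3 e^{-0.84768363\,u} \le 0.04661$. The function $u \mapsto u^3 e^{-au}$ with $a = 0.84768363$ has a unique maximum at $u = 3/a \approx 3.54$ and is monotonically decreasing beyond; a numerical root-finding step then identifies a threshold $x_{L}$ (of order $e^{161}$) beyond which the bound \eqref{large_x_result} is stronger than what we need.

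For the intermediate range $[x_{\min}, x_{L}]$, where $x_{\min}$ is the smallest value of $x_0$ appearing in Table \ref{tab:pi}, I would use the step-function numerical bounds $\varepsilon_{\pi,\mathrm{num}}(x_0)$ tabulated there (produced by Theorem \ref{prop_num_pi} and Corollary \ref{cor_num_pi}). It suffices to verify entry by entry that each tabulated $\varepsilon_{\pi,\mathrm{num}}(x_0) \le 0.4298$; this is essentially immediate since the tabulated values are vastly smaller than $0.4298$ for even very moderate $x_0$.

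For the small range $2 \le x \le x_{\min}$, the proof relies on direct computation. Since $\pi(x)$ is exactly known in this range and $\Li(x)$ is readily computed to arbitrary precision, one evaluates $E_{\pi}(x)$ exactly on each piecewise-continuous segment between consecutive primes. Between primes $|\pi(x)-\Li(x)|$ is monotone (decreasing or increasing in a single direction, depending on the sign of $\pi(x)-\Li(x)$), while $x/\log x$ has its global minimum at $x=e$. A short enumeration shows the candidate maxima lie just above small primes: one checks, for instance, $E_{\pi}(2^+) \approx 0.347$ and $E_{\pi}(3^+) \approx 0.395$, all safely below $0.4298$, and that no subsequent prime jump comes close.

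The main obstacle is organizing the low-$x$ verification so as to rigorously capture \emph{every} local maximum of $E_{\pi}$ in this region — in particular, ruling out that some prime jump at an intermediate scale produces a ratio larger than $0.4298$. In practice this is done with the same interpolation/step-function technique used in the proofs of Corollaries \ref{epsilon_asymp_pi_explicit}, \ref{cor:alt1}, and \ref{cor:alt2}, drawing on the data from \cite[Theorem~2]{Bu18} and \cite[Tables 13, 14]{BKLNW21} together with the online tables of \cite{FKS3}. Stitching the three ranges together then yields the uniform bound \eqref{explicit_theroem_result}.
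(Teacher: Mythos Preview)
Your approach is essentially the paper's: split into ranges and verify numerically. Two points where the paper is cleaner. First, the detour through Corollary~\ref{epsilon_asymp_pi_explicit} for a ``very large'' range is unnecessary: each entry $\varepsilon_{\pi,\mathrm{num}}(x_1)$ in Table~\ref{tab:pi} bounds $E_{\pi}(x)$ for \emph{all} $x\ge x_1$, so a single table entry below $0.4298$ already closes out $[x_1,\infty)$; the paper simply cites Table~\ref{tab:pi} for $x>10^{19}$ and never invokes the asymptotic bound. Second, your handling of the small range is muddled: enumerating $E_{\pi}$ on every inter-prime interval up to $x_{\min}\approx e^{44}$ is computationally hopeless, and the ``in practice'' appeal to B\"uthe does the real work. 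The paper makes this explicit by splitting at $p_{25}=97$: for $2\le x\le 97$ it checks the finitely many inter-prime intervals directly, and for $97\le x\le 10^{19}$ it uses \cite[Theorem~2]{Bu18}, which gives $E_{\pi}(x)\le \frac{1}{\sqrt{x}}\bigl(1.95+\tfrac{3.9}{\log x}+\tfrac{19.5}{(\log x)^2}\bigr)\le 0.4298$ on that range. (Also, your quoted value $E_{\pi}(3^+)\approx 0.395$ is off; it is about $0.32$.)
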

\begin{proof}
We numerically verify that the inequality in \eqref{explicit_theroem_result} holds by showing that, for $1 \leq n \leq 25$ and all $x \in [p_n, p_{n+1}]$,
\[ \left| \frac{\log(x)}{x} \left(\pi(x) - \Li(x)\right) \right| \leq \left| \frac{\log(p_n)}{p_n} \left(\pi(p_n) - \Li(p_{n+1}\right)) \right| \leq 0.4298. \]
For $x$ satisfying $p_{25} = 97 \leq x \leq 10^{19}$, we  use \cite[Theorem~2]{Bu18}
%\todo{For 28 maybe add a citation to Johnston here, but check if it makes more sense in Prop 17, Remark 18, Proof of Cor23/24} 
and verify
\[ \mathcal{E}(x) = \frac{1}{\sqrt{x}} \left( 1.95 + \frac{3.9}{\log(x)} + \frac{19.5}{(\log(x))^2} \right) \leq 0.4298. \]
For $x> 10^{19}$, we use Theorem \ref{prop_num_pi} as well as values for $\varepsilon_{\pi,num}(x)$ found in Table \ref{tab:pi} to conclude 
\[ \varepsilon_{\pi,num}(x) \leq 0.4298. \qedhere\]
\end{proof}

\section{Tables}
\begin{small}

\begin{table}[h!]
\caption{Values for $\varepsilon_{\psi,num}(x_0)$ and $\varepsilon_{\theta,num}(x_0)$ as defined in \eqref{bnd-num-E-eps}, as computed in \cite[Theorem 2]{Bu18} and using \cite[Theorem 1]{Bu16} to extend to values $x > 10^{19}$.~Note that for  $36<\log(x)<\log(10^{19})$ the bound from \cite[Theorem 2]{Bu18} is tighter than what we provide but is not known to hold for larger $x$.~}\label{table:theta-small}

\resizebox{\textwidth}{!}
{
\begin{tabular}{|c|c|c|}
\hline
$\log(x_1)$ & $\varepsilon_{\psi,num}(x_0)$ & $\varepsilon_{\theta,num}(x_0)$\\
\hline
$4$ & 0.12722 & 0.27880 \\
$5$ & 0.077160 & 0.16910 \\
$6$ & 0.046800 & 0.10257 \\
$7$ & 0.028386 & 0.058885 \\
$8$ & 0.017217 & 0.035716 \\
$9$ & 0.010443 & 0.021663 \\
$10$ & 0.0063337 & 0.013139 \\
$11$ & 0.0038416 & 0.0079693 \\
$12$ & 0.0023301 & 0.0048336 \\
$13$ & 0.0014133 & 0.0029318 \\
$14$ & 0.00085717 & 0.0017782 \\
\hline
\end{tabular}
\hfil
\begin{tabular}{|c|c|c|}
\hline
$\log(x_1)$ & $\varepsilon_{\psi,num}(x_0)$& $\varepsilon_{\theta,num}(x_0)$\\
\hline
$15$ & 0.00051990 & 0.0010786 \\
$16$ & 0.00031534 & 0.00065416 \\
$17$ & 0.00019127 & 0.00039677 \\
$18$ & 0.00011601 & 0.00024065 \\
$19$ & 7.0361 e-5 & 0.00014597 \\
$20$ & 4.2676 e-5 & 8.8530 e-5 \\
$21$ & 2.5885 e-5 & 5.3697 e-5 \\
$22$ & 1.5700 e-5 & 3.2569 e-5 \\
$23$ & 9.5223 e-6 & 1.9754 e-5 \\
$24$ & 5.7756 e-6 & 1.1982 e-5 \\
$25$ & 3.5031 e-6 & 7.2670 e-6 \\
\hline
\end{tabular}
\hfil
\begin{tabular}{|c|c|c|}
\hline
$\log(x_1)$ & $\varepsilon_{\psi,num}(x_0)$ &$\varepsilon_{\theta,num}(x_0)$\\
\hline
$26$ & 2.1248 e-6 & 4.4077 e-6 \\
$27$ & 1.2888 e-6 & 2.6734 e-6 \\
$28$ & 7.8164 e-7 & 1.6215 e-6 \\
$29$ & 4.7409 e-7 & 9.8348 e-7 \\
$30$ & 2.8755 e-7 & 5.9651 e-7 \\
$31$ & 1.7441 e-7 & 3.6181 e-7 \\
$32$ & 1.0579 e-7 & 2.1945 e-7 \\
$33$ & 6.4161 e-8 & 1.3310 e-7 \\
$34$ & 3.8916 e-8 & 8.0729 e-8 \\
$35$ & 2.3604 e-8 & 4.8965 e-8 \\
$36$ & 2.3604 e-8 & 2.9699 e-8 \\
% Need to know the $\log(10^{10})$ value
%$37$ & 1.8014 e-8 \\
%$38$ & 1.0926 e-8 \\
%$39$ & 6.6267 e-9 \\
%$40$ & 4.0193 e-9 \\
%$41$ & 2.4378 e-9 \\
%$42$ & 1.4786 e-9 \\
%$43$ & 8.9682 e-10 \\
\hline
\end{tabular}
}
\end{table}

\begin{table}[h!]
\caption{Values for $\alpha, c, T$, and $\varepsilon_{\psi,num}(x_1)$ computed as in \cite[Theorem 1]{Bu16}, and for $\varepsilon_{\theta,num}(x_0)$ as in Proposition \ref{prop:numpsitotheta}.~
The first line also uses \cite[Theorem 2]{Bu18}.~Note that the $\alpha$, $c$ and $T$ values are rounded.}\label{table:theta-med}
\centering
%\resizebox{!}{0.30\textheight}
{
\begin{tabular}{|c|ccc|c|c|}
\hline
$\log(x_0)$ & $\alpha$ & c & T & $\varepsilon_{\psi,num}(x_0)$ & $\varepsilon_{\theta,num}(x_0)$ \\
\hline
%$43$ & 0.1223 & 27.27 & 2.030 e9 & 2.6967 e-8 & 2.7427 e-8 \\
$37$ & &  &   & 1.9220 e-8 & 1.9537 e-9 \\
$\log(10^{19})$ & 0.1211 & 23.93 &  3.045 e9 & 1.9220 e-8 & 1.9537 e-9 \\
$44$ & 0.1049 & 37.88 & 4.567 e9 & 1.7144 e-8 & 1.7423 e-8 \\
$45$ & 0.1122 & 33.03 & 6.851 e9 & 1.0906 e-8 & 1.1075 e-8 \\
$46$ & 0.1163 & 30.58 & 1.028 e10 & 6.9310 e-9 & 7.0337 e-9 \\
$47$ & 0.1170 & 30.01 & 1.541 e10 & 4.4023 e-9 & 4.4645 e-9 \\
$48$ & 0.1168 & 29.92 & 2.312 e10 & 2.7948 e-9 & 2.8326 e-9 \\
$49$ & 0.1161 & 30.02 & 3.468 e10 & 1.7737 e-9 & 1.7966 e-9 \\
$50$ & 0.1065 & 36.60 & 7.804 e10 & 1.1197 e-9 & 1.1336 e-9 \\
$51$ & 0.1114 & 33.31 & 1.171 e11 & 7.0834 e-10 & 7.1676 e-10 \\
$52$ & 0.1122 & 32.64 & 1.756 e11 & 4.4790 e-10 & 4.5301 e-10 \\
$53$ & 0.1121 & 32.50 & 2.634 e11 & 2.8313 e-10 & 2.8623 e-10 \\
$54$ & 0.1115 & 32.58 & 3.951 e11 & 1.7893 e-10 & 1.8081 e-10 \\
$55$ & 0.1032 & 38.95 & 8.889 e11 & 1.1254 e-10 & 1.1368 e-10 \\
$56$ & 0.1075 & 35.80 & 1.333 e12 & 7.0920 e-11 & 7.1612 e-11 \\
$57$ & 0.1082 & 35.13 & 2.000 e12 & 4.4676 e-11 & 4.5096 e-11 \\
$58$ & 0.1080 & 35.01 & 3.000 e12 & 2.8138 e-11 & 2.8393 e-11 \\
$59$ & 0.1076 & 33.92 & 3.000 e12 & 1.8007 e-11 & 1.8161 e-11 \\
$60$ & 0.1061 & 33.57 & 3.000 e12 & 1.1851 e-11 & 1.1945 e-11 \\
$61$ & 0.1044 & 33.41 & 3.000 e12 & 8.1094 e-12 & 8.1662 e-12 \\
$62$ & 0.1026 & 33.33 & 3.000 e12 & 5.8340 e-12 & 5.8684 e-12 \\
$63$ & 0.1008 & 33.28 & 3.000 e12 & 4.4480 e-12 & 4.4689 e-12 \\
$64$ & 0.09906 & 33.26 & 3.000 e12 & 3.6018 e-12 & 3.6145 e-12 \\
$65$ & 0.09736 & 33.24 & 3.000 e12 & 3.0832 e-12 & 3.0909 e-12 \\
$66$ & 0.09571 & 33.24 & 3.000 e12 & 2.7635 e-12 & 2.7682 e-12 \\
$67$ & 0.09411 & 33.24 & 3.000 e12 & 2.5647 e-12 & 2.5675 e-12 \\
$68$ & 0.09257 & 33.24 & 3.000 e12 & 2.4393 e-12 & 2.4410 e-12 \\
$69$ & 0.09107 & 33.24 & 3.000 e12 & 2.3587 e-12 & 2.3597 e-12 \\
$70$ & 0.08962 & 33.25 & 3.000 e12 & 2.3053 e-12 & 2.3059 e-12 \\
$71$ & 0.08822 & 33.25 & 3.000 e12 & 2.2687 e-12 & 2.2690 e-12 \\
$72$ & 0.08687 & 33.26 & 3.000 e12 & 2.2423 e-12 & 2.2425 e-12 \\
$73$ & 0.08555 & 33.26 & 3.000 e12 & 2.2222 e-12 & 2.2224 e-12 \\
$74$ & 0.08428 & 33.26 & 3.000 e12 & 2.2062 e-12 & 2.2063 e-12 \\
$75$ & 0.08305 & 33.27 & 3.000 e12 & 2.1927 e-12 & 2.1928 e-12 \\
\hline
\end{tabular}
}
\end{table}

\begin{table}[h!]
\caption{Values for $\varepsilon_{\theta,num}$ are as calculated in Proposition \ref{prop:numpsitotheta}.
They use values for $\varepsilon_{\psi,num}$ as calculated in \cite[Theorem 1]{Bu16} for $\log(x_0) < 2\,073$ and \cite[Theorem 1.1]{FKS} for $\log(x_0) \ge 2\,073$, respectively.~Note that computed values of $\varepsilon_{\theta,num}$ and $\varepsilon_{\psi,num}$ all round up to the same value.} \label{table:theta}
\centering
\resizebox{0.89\textwidth}{!}
{
\begin{tabular}{|c|c|}
\hline
$\log(x_0)$ & $\varepsilon_{\theta,num}(x_0) $ \\
\hline
 $76$ & 2.1809 e-12 \\
 $77$ & 2.1702 e-12 \\
 $78$ & 2.1602 e-12 \\
 $79$ & 2.1508 e-12 \\
 $80$ & 2.1419 e-12 \\
 $81$ & 2.1333 e-12 \\
 $82$ & 2.1251 e-12 \\
 $83$ & 2.1171 e-12 \\
 $84$ & 2.1093 e-12 \\
 $85$ & 2.1018 e-12 \\
 $86$ & 2.0945 e-12 \\
 $87$ & 2.0874 e-12 \\
 $88$ & 2.0805 e-12 \\
 $89$ & 2.0738 e-12 \\
 $90$ & 2.0672 e-12 \\
 $91$ & 2.0608 e-12 \\
 $92$ & 2.0546 e-12 \\
 $93$ & 2.0485 e-12 \\
 $94$ & 2.0425 e-12 \\
 $95$ & 2.0367 e-12 \\
 $96$ & 2.0311 e-12 \\
 $97$ & 2.0256 e-12 \\
 $98$ & 2.0202 e-12 \\
 $99$ & 2.0149 e-12 \\
 $100$ & 2.0097 e-12 \\
 $110$ & 1.9639 e-12 \\
 $120$ & 1.9264 e-12 \\
 $130$ & 1.8952 e-12 \\
 $140$ & 1.8688 e-12 \\
 $150$ & 1.8461 e-12 \\
 $160$ & 1.8264 e-12 \\
 $170$ & 1.8092 e-12 \\
 $180$ & 1.7940 e-12 \\
 $190$ & 1.7805 e-12 \\
 $200$ & 1.7684 e-12 \\
 $210$ & 1.7575 e-12 \\
 $220$ & 1.7476 e-12 \\
 $230$ & 1.7386 e-12 \\
 $240$ & 1.7304 e-12 \\
 $250$ & 1.7229 e-12 \\
 $260$ & 1.7160 e-12 \\
 $270$ & 1.7095 e-12 \\
 $280$ & 1.7036 e-12 \\
 $290$ & 1.6981 e-12 \\
 $300$ & 1.6930 e-12 \\
 $310$ & 1.6882 e-12 \\
 $320$ & 1.6837 e-12 \\
 $330$ & 1.6795 e-12 \\
 $340$ & 1.6755 e-12 \\
 $350$ & 1.6718 e-12 \\
 $360$ & 1.6683 e-12 \\
\hline
\end{tabular}
\hfil
\begin{tabular}{|c|c|}
\hline
$\log(x_0)$ & $\varepsilon_{\theta,num}(x_0) $ \\
\hline
 $370$ & 1.6650 e-12 \\
 $380$ & 1.6618 e-12 \\
 $390$ & 1.6588 e-12 \\
 $400$ & 1.6560 e-12 \\
 $425$ & 1.6495 e-12 \\
 $450$ & 1.6438 e-12 \\
 $475$ & 1.6387 e-12 \\
 $500$ & 1.6341 e-12 \\
 $525$ & 1.6299 e-12 \\
 $550$ & 1.6261 e-12 \\
 $575$ & 1.6227 e-12 \\
 $600$ & 1.6195 e-12 \\
 $625$ & 1.6166 e-12 \\
 $650$ & 1.6140 e-12 \\
 $675$ & 1.6115 e-12 \\
 $700$ & 1.6092 e-12 \\
 $725$ & 1.6071 e-12 \\
 $750$ & 1.6051 e-12 \\
 $775$ & 1.6032 e-12 \\
 $800$ & 1.6015 e-12 \\
 $825$ & 1.5998 e-12 \\
 $850$ & 1.5983 e-12 \\
 $875$ & 1.5968 e-12 \\
 $900$ & 1.5955 e-12 \\
 $925$ & 1.5942 e-12 \\
 $950$ & 1.5929 e-12 \\
 $975$ & 1.5918 e-12 \\
 $1\,000$ & 1.5907 e-12 \\
 $1\,025$ & 1.5896 e-12 \\
 $1\,050$ & 1.5886 e-12 \\
 $1\,075$ & 1.5877 e-12 \\
 $1\,100$ & 1.5867 e-12 \\
 $1\,125$ & 1.5859 e-12 \\
 $1\,150$ & 1.5850 e-12 \\
 $1\,175$ & 1.5843 e-12 \\
 $1\,200$ & 1.5835 e-12 \\
 $1\,225$ & 1.5828 e-12 \\
 $1\,250$ & 1.5821 e-12 \\
 $1\,275$ & 1.5814 e-12 \\
 $1\,300$ & 1.5807 e-12 \\
 $1\,325$ & 1.5801 e-12 \\
 $1\,350$ & 1.5795 e-12 \\
 $1\,375$ & 1.5789 e-12 \\
 $1\,400$ & 1.5784 e-12 \\
 $1\,425$ & 1.5778 e-12 \\
 $1\,450$ & 1.5773 e-12 \\
 $1\,475$ & 1.5768 e-12 \\
 $1\,500$ & 1.5763 e-12 \\
 $1\,525$ & 1.5759 e-12 \\
 $1\,550$ & 1.5754 e-12 \\
 $1\,575$ & 1.5750 e-12 \\
\hline
\end{tabular}
\hfil
\begin{tabular}{|c|c|}
\hline
$\log(x_0)$ & $\varepsilon_{\theta,num}(x_0) $ \\
\hline
 $1\,600$ & 1.5745 e-12 \\
 $1\,625$ & 1.5741 e-12 \\
 $1\,650$ & 1.5737 e-12 \\
 $1\,675$ & 1.5733 e-12 \\
 $1\,700$ & 1.5730 e-12 \\
 $1\,725$ & 1.5726 e-12 \\
 $1\,750$ & 1.5722 e-12 \\
 $1\,775$ & 1.5719 e-12 \\
 $1\,800$ & 1.5716 e-12 \\
 $1\,825$ & 1.5712 e-12 \\
 $1\,850$ & 1.5709 e-12 \\
 $1\,875$ & 1.5706 e-12 \\
 $1\,900$ & 1.5703 e-12 \\
 $1\,925$ & 1.5700 e-12 \\
 $1\,950$ & 1.5697 e-12 \\
 $1\,975$ & 1.5695 e-12 \\
 $2\,000$ & 1.5692 e-12 \\
 $2\,025$ & 1.5689 e-12 \\
 $2\,050$ & 1.5687 e-12 \\
 $2\,075$ & 1.5485 e-12 \\
 $2\,100$ & 1.3246 e-12 \\
 $2\,125$ & 1.1333 e-12 \\
 $2\,150$ & 9.6943 e-13 \\
 $2\,175$ & 8.2952 e-13 \\
 $2\,200$ & 7.0974 e-13 \\
 $2\,225$ & 6.0730 e-13 \\
 $2\,250$ & 5.1965 e-13 \\
 $2\,275$ & 4.4472 e-13 \\
 $2\,300$ & 3.8058 e-13 \\
 $2\,325$ & 3.2572 e-13 \\
 $2\,350$ & 2.7879 e-13 \\
 $2\,375$ & 2.3863 e-13 \\
 $2\,400$ & 2.0426 e-13 \\
 $2\,425$ & 1.7484 e-13 \\
 $2\,450$ & 1.4967 e-13 \\
 $2\,475$ & 1.2814 e-13 \\
 $2\,500$ & 1.0972 e-13 \\
 $2\,525$ & 9.3932 e-14 \\
 $2\,550$ & 8.0424 e-14 \\
 $2\,575$ & 6.8870 e-14 \\
 $2\,600$ & 5.8977 e-14 \\
 $2\,625$ & 5.0505 e-14 \\
 $2\,650$ & 4.3255 e-14 \\
 $2\,675$ & 3.7045 e-14 \\
 $2\,700$ & 3.1729 e-14 \\
 $2\,725$ & 2.7178 e-14 \\
 $2\,750$ & 2.3282 e-14 \\
 $2\,775$ & 1.9945 e-14 \\
 $2\,800$ & 1.7087 e-14 \\
 $2\,825$ & 1.4639 e-14 \\
 $2\,850$ & 1.2543 e-14 \\
\hline
\end{tabular}
\hfil
\begin{tabular}{|c|c|}
\hline
$\log(x_0)$ & $\varepsilon_{\theta,num}(x_0) $ \\
\hline
 $2\,875$ & 1.0748 e-14 \\
 $2\,900$ & 9.2090 e-15 \\
 $2\,925$ & 7.8910 e-15 \\
 $2\,950$ & 6.7624 e-15 \\
 $2\,975$ & 5.7958 e-15 \\
 $3\,000$ & 4.9678 e-15 \\
 $3\,100$ & 2.6823 e-15 \\
 $3\,200$ & 1.4496 e-15 \\
 $3\,300$ & 7.8431 e-16 \\
 $3\,400$ & 4.2480 e-16 \\
 $3\,500$ & 2.3037 e-16 \\
 $3\,600$ & 1.2507 e-16 \\
 $3\,700$ & 6.7993 e-17 \\
 $3\,800$ & 3.7015 e-17 \\
 $3\,900$ & 2.0181 e-17 \\
 $4\,000$ & 1.1021 e-17 \\
 $4\,100$ & 6.0283 e-18 \\
 $4\,200$ & 3.3037 e-18 \\
 $4\,300$ & 1.8141 e-18 \\
 $4\,400$ & 9.9819 e-19 \\
 $4\,500$ & 5.5050 e-19 \\
 $4\,600$ & 3.0433 e-19 \\
 $4\,700$ & 1.6899 e-19 \\
 $4\,800$ & 9.4381 e-20 \\
 $4\,900$ & 5.3014 e-20 \\
 $5\,000$ & 2.9942 e-20 \\
 $6\,000$ & 1.2976 e-22 \\
 $7\,000$ & 8.5161 e-25 \\
 $8\,000$ & 7.7852 e-27 \\
 $9\,000$ & 9.2215 e-29 \\
 $10\,000$ & 1.3680 e-30 \\
 $20\,000$ & 1.9347 e-45 \\
 $30\,000$ & 6.6587 e-57 \\
 $40\,000$ & 1.3469 e-66 \\
 $50\,000$ & 3.7291 e-75 \\
 $60\,000$ & 6.6646 e-83 \\
 $70\,000$ & 4.9111 e-90 \\
 $80\,000$ & 1.1133 e-96 \\
 $90\,000$ & 6.3304 e-103 \\
 $100\,000$ & 7.7824 e-109 \\
 $200\,000$ & 1.2375 e-156 \\
 $300\,000$ & 2.1902 e-193 \\
 $400\,000$ & 2.1118 e-224 \\
 $500\,000$ & 9.5685 e-252 \\
 $600\,000$ & 1.7723 e-276 \\
 $700\,000$ & 3.1360 e-299 \\
 $800\,000$ & 2.0568 e-320 \\
 $900\,000$ & 2.5885 e-340 \\
 $10^6$ & 3.8635 e-359 \\
 $10^7$ & 1.0364 e-1153 \\
 $10^8$ & 1.7060 e-3669 \\
\hline
\end{tabular}
}
\end{table}

\begin{table}[h!]
\caption{Values for $\varepsilon_{\pi,num}(x_1)$ are calculated using Corollary \ref{cor_num_pi}, with Theorem \ref{prop_num_pi}.~ 
Note that here $x_0=10^{15}$ and that our sets $\{b_i\}_{i=1}^N$ and $\{b_i'\}_{i=1}^M$ are more refined than as provided by Tables \ref{table:theta-small}, \ref{table:theta-med} and \ref{table:theta}.}
\label{tab:pi}
\centering
\resizebox{!}{0.47\textheight}
{
\begin{tabular}{|c|c|}
\hline
$\log(x_1)$ & $\varepsilon_{\pi,num}(x_1)$ \\
\hline
 $44$ & 1.7893 e-8 \\
 $45$ & 1.1449 e-8 \\
 $46$ & 7.2959 e-9 \\
 $47$ & 4.6388 e-9 \\
 $48$ & 2.9451 e-9 \\
 $49$ & 1.8680 e-9 \\
 $50$ & 1.1785 e-9 \\
 $51$ & 7.4479 e-10 \\
 $52$ & 4.7046 e-10 \\
 $53$ & 2.9707 e-10 \\
 $54$ & 1.8753 e-10 \\
 $55$ & 1.1785 e-10 \\
 $56$ & 7.4191 e-11 \\
 $57$ & 4.6692 e-11 \\
 $58$ & 2.9380 e-11 \\
 $59$ & 1.8774 e-11 \\
 $60$ & 1.2330 e-11 \\
 $61$ & 8.4134 e-12 \\
 $62$ & 6.0325 e-12 \\
 $63$ & 4.5827 e-12 \\
 $64$ & 3.6978 e-12 \\
 $65$ & 3.1557 e-12 \\
 $66$ & 2.8216 e-12 \\
 $67$ & 2.6138 e-12 \\
 $68$ & 2.4828 e-12 \\
 $69$ & 2.3985 e-12 \\
 $70$ & 2.3427 e-12 \\
 $71$ & 2.3043 e-12 \\
 $72$ & 2.2766 e-12 \\
 $73$ & 2.2555 e-12 \\
 $74$ & 2.2387 e-12 \\
 $75$ & 2.2244 e-12 \\
 $76$ & 2.2120 e-12 \\
 $77$ & 2.2006 e-12 \\
 $78$ & 2.1901 e-12 \\
 $79$ & 2.1802 e-12 \\
 $80$ & 2.1708 e-12 \\
 $81$ & 2.1617 e-12 \\
 $82$ & 2.1530 e-12 \\
 $83$ & 2.1446 e-12 \\
 $84$ & 2.1364 e-12 \\
 $85$ & 2.1284 e-12 \\
 $86$ & 2.1207 e-12 \\
 $87$ & 2.1132 e-12 \\
 $88$ & 2.1059 e-12 \\
 $89$ & 2.0988 e-12 \\
 $90$ & 2.0919 e-12 \\
 $91$ & 2.0851 e-12 \\
 $92$ & 2.0786 e-12 \\
 $93$ & 2.0721 e-12 \\
 $94$ & 2.0659 e-12 \\
 $95$ & 2.0598 e-12 \\
\hline
\end{tabular}
\hfil
\begin{tabular}{|c|c|}
\hline
$\log(x_1)$ &  $\epsilon_{\pi,num}(x_1)$ \\
\hline
 $96$ & 2.0538 e-12 \\
 $97$ & 2.0480 e-12 \\
 $98$ & 2.0423 e-12 \\
 $99$ & 2.0367 e-12 \\
 $100$ & 2.0339 e-12 \\
 $110$ & 1.9853 e-12 \\
 $120$ & 1.9457 e-12 \\
 $130$ & 1.9126 e-12 \\
 $140$ & 1.8847 e-12 \\
 $150$ & 1.8608 e-12 \\
 $160$ & 1.8401 e-12 \\
 $170$ & 1.8219 e-12 \\
 $180$ & 1.8059 e-12 \\
 $190$ & 1.7917 e-12 \\
 $200$ & 1.7789 e-12 \\
 $210$ & 1.7675 e-12 \\
 $220$ & 1.7571 e-12 \\
 $230$ & 1.7476 e-12 \\
 $240$ & 1.7390 e-12 \\
 $250$ & 1.7311 e-12 \\
 $260$ & 1.7238 e-12 \\
 $270$ & 1.7171 e-12 \\
 $280$ & 1.7108 e-12 \\
 $290$ & 1.7051 e-12 \\
 $300$ & 1.6997 e-12 \\
 $310$ & 1.6946 e-12 \\
 $320$ & 1.6899 e-12 \\
 $330$ & 1.6855 e-12 \\
 $340$ & 1.6814 e-12 \\
 $350$ & 1.6775 e-12 \\
 $360$ & 1.6738 e-12 \\
 $370$ & 1.6703 e-12 \\
 $380$ & 1.6670 e-12 \\
 $390$ & 1.6639 e-12 \\
 $400$ & 1.6609 e-12 \\
 $410$ & 1.6581 e-12 \\
 $420$ & 1.6554 e-12 \\
 $430$ & 1.6529 e-12 \\
 $440$ & 1.6505 e-12 \\
 $450$ & 1.6481 e-12 \\
 $475$ & 1.6428 e-12 \\
 $500$ & 1.6380 e-12 \\
 $525$ & 1.6336 e-12 \\
 $550$ & 1.6296 e-12 \\
 $575$ & 1.6260 e-12 \\
 $600$ & 1.6227 e-12 \\
 $625$ & 1.6197 e-12 \\
 $650$ & 1.6169 e-12 \\
 $675$ & 1.6143 e-12 \\
 $700$ & 1.6119 e-12 \\
 $725$ & 1.6097 e-12 \\
 $750$ & 1.6076 e-12 \\
\hline
\end{tabular}
\hfil
\begin{tabular}{|c|c|}
\hline
$\log(x_1)$ &  $\epsilon_{\pi,num}(x_1)$ \\
\hline
 $775$ & 1.6057 e-12 \\
 $800$ & 1.6038 e-12 \\
 $825$ & 1.6021 e-12 \\
 $850$ & 1.6005 e-12 \\
 $875$ & 1.5990 e-12 \\
 $900$ & 1.5976 e-12 \\
 $925$ & 1.5962 e-12 \\
 $950$ & 1.5949 e-12 \\
 $975$ & 1.5937 e-12 \\
 $1\,000$ & 1.5925 e-12 \\
 $1\,025$ & 1.5914 e-12 \\
 $1\,050$ & 1.5904 e-12 \\
 $1\,075$ & 1.5894 e-12 \\
 $1\,100$ & 1.5885 e-12 \\
 $1\,125$ & 1.5875 e-12 \\
 $1\,150$ & 1.5867 e-12 \\
 $1\,175$ & 1.5858 e-12 \\
 $1\,200$ & 1.5850 e-12 \\
 $1\,225$ & 1.5843 e-12 \\
 $1\,250$ & 1.5836 e-12 \\
 $1\,275$ & 1.5828 e-12 \\
 $1\,300$ & 1.5822 e-12 \\
 $1\,325$ & 1.5815 e-12 \\
 $1\,350$ & 1.5809 e-12 \\
 $1\,375$ & 1.5803 e-12 \\
 $1\,400$ & 1.5797 e-12 \\
 $1\,425$ & 1.5791 e-12 \\
 $1\,450$ & 1.5786 e-12 \\
 $1\,475$ & 1.5781 e-12 \\
 $1\,500$ & 1.5776 e-12 \\
 $1\,525$ & 1.5771 e-12 \\
 $1\,550$ & 1.5766 e-12 \\
 $1\,575$ & 1.5761 e-12 \\
 $1\,600$ & 1.5757 e-12 \\
 $1\,625$ & 1.5753 e-12 \\
 $1\,650$ & 1.5749 e-12 \\
 $1\,675$ & 1.5745 e-12 \\
 $1\,700$ & 1.5741 e-12 \\
 $1\,725$ & 1.5737 e-12 \\
 $1\,750$ & 1.5733 e-12 \\
 $1\,775$ & 1.5729 e-12 \\
 $1\,800$ & 1.5726 e-12 \\
 $1\,825$ & 1.5723 e-12 \\
 $1\,850$ & 1.5719 e-12 \\
 $1\,875$ & 1.5716 e-12 \\
 $1\,900$ & 1.5713 e-12 \\
 $1\,925$ & 1.5710 e-12 \\
 $1\,950$ & 1.5707 e-12 \\
 $1\,975$ & 1.5704 e-12 \\
 $2\,000$ & 1.5701 e-12 \\
 $2\,100$ & 1.3254 e-12 \\
 $2\,200$ & 7.1013 e-13 \\
\hline
\end{tabular}
\hfil
\begin{tabular}{|c|c|}
\hline
$\log(x_1)$ &  $\epsilon_{\pi,num}(x_1)$ \\
\hline
 $2\,300$ & 3.8078 e-13 \\
 $2\,400$ & 2.0436 e-13 \\
 $2\,500$ & 1.0977 e-13 \\
 $2\,600$ & 5.9004 e-14 \\
 $2\,700$ & 3.1743 e-14 \\
 $2\,800$ & 1.7095 e-14 \\
 $2\,900$ & 9.2127 e-15 \\
 $3\,000$ & 4.9698 e-15 \\
 $3\,100$ & 2.6833 e-15 \\
 $3\,200$ & 1.4502 e-15 \\
 $3\,300$ & 7.8459 e-16 \\
 $3\,400$ & 4.2495 e-16 \\
 $3\,500$ & 2.3044 e-16 \\
 $3\,600$ & 1.2511 e-16 \\
 $3\,700$ & 6.8015 e-17 \\
 $3\,800$ & 3.7027 e-17 \\
 $3\,900$ & 2.0187 e-17 \\
 $4\,000$ & 1.1024 e-17 \\
 $4\,100$ & 6.0301 e-18 \\
 $4\,200$ & 3.3046 e-18 \\
 $4\,300$ & 1.8146 e-18 \\
 $4\,400$ & 9.9846 e-19 \\
 $4\,500$ & 5.5065 e-19 \\
 $4\,600$ & 3.0441 e-19 \\
 $4\,700$ & 1.6903 e-19 \\
 $4\,800$ & 9.4404 e-20 \\
 $4\,900$ & 5.3026 e-20 \\
 $5\,000$ & 2.9949 e-20 \\
 $6\,000$ & 1.2979 e-22 \\
 $7\,000$ & 8.5175 e-25 \\
 $8\,000$ & 7.7862 e-27 \\
 $9\,000$ & 9.2230 e-29 \\
 $10\,000$ & 1.3682 e-30 \\
 $20\,000$ & 1.9349 e-45 \\
 $30\,000$ & 6.6592 e-57 \\
 $40\,000$ & 1.3470 e-66 \\
 $50\,000$ & 3.7292 e-75 \\
 $60\,000$ & 6.6648 e-83 \\
 $70\,000$ & 4.9112 e-90 \\
 $80\,000$ & 1.1133 e-96 \\
 $90\,000$ & 6.3306 e-103 \\
 $100\,000$ & 7.7825 e-109 \\
 $200\,000$ & 1.2375 e-156 \\
 $300\,000$ & 2.1902 e-193 \\
 $400\,000$ & 2.1118 e-224 \\
 $500\,000$ & 9.5685 e-252 \\
 $600\,000$ & 1.7723 e-276 \\
 $700\,000$ & 3.1360 e-299 \\
 $800\,000$ & 2.0569 e-320 \\
 $900\,000$ & 2.5885 e-340 \\
 $10^6$ & 3.8635 e-359 \\
 $10^7$ & 1.0364 e-1153 \\
 %$10^8$ & 1.7060 e-3669 \\
\hline
\end{tabular}
}
\end{table}

\begin{table}[h!]
\caption{Sample of values showing $ \varepsilon_{\pi,asymp}(x_1) $ interpolates an upper bound for $ \varepsilon_{\pi,num}(x_1)$ with $A_\pi = 121.107$, $B=3/2$, and $C=2$. See Corollary \ref{epsilon_asymp_pi_explicit}.
Note that values $ \varepsilon_{\pi,num}(x_1,\infty)$  displayed are computed using \eqref{mu_num_def} from Theorem \ref{prop_num_pi} rather than Corollary \ref{cor_num_pi}.}
\label{interp-table}
\centering
\begin{tabular}{|r|l|l|}
\hline
$\log(x_1)$ & $ \varepsilon_{\pi,asymp}(x_1) $ & $ \varepsilon_{\pi,num}(x_1,\infty)$ \\
\hline
 $100$ & 1.9202 & 2.0495 e-12 \\
 $1\,000$ & 6.6533 e-7 & 1.5938 e-12 \\
 $2\,000$ & 2.8341 e-11 & 1.5707 e-12 \\
 $3\,000$ & 1.0385 e-14 & 4.9711 e-15 \\
 $4\,000$ & 1.2145 e-17 & 1.1026 e-17 \\
 $5\,000$ & 3.0305 e-20 & 2.9954 e-20 \\
 $6\,000$ & 1.3052 e-22 & 1.2980 e-22 \\
 $7\,000$ & 8.5363 e-25 & 8.5185 e-25 \\
 $8\,000$ & 7.7910 e-27 & 7.7871 e-27 \\
 $9\,000$ & 9.3522 e-29 & 9.2236 e-29 \\
 $10\,000$ & 1.4137 e-30 & 1.3683 e-30 \\
 \hline
 \end{tabular}\quad
 \begin{tabular}{|r|l|l|}
 \hline
 $\log(x_0)$ & $ \varepsilon_{\pi,asymp}(x_1) $ & $ \varepsilon_{\pi,num}(x_1,\infty)$ \\
 \hline
 $11\,000$ & 2.6036 e-32 & 2.4758 e-32 \\
 $12\,000$ & 5.6934 e-34 & 5.3287 e-34 \\
 $13\,000$ & 1.4481 e-35 & 1.3361 e-35 \\
 $14\,000$ & 4.2127 e-37 & 3.8368 e-37 \\
 $15\,000$ & 1.3824 e-38 & 1.2443 e-38 \\
 $16\,000$ & 5.0581 e-40 & 4.5033 e-40 \\
 $17\,000$ & 2.0432 e-41 & 1.8009 e-41 \\
 $18\,000$ & 9.0354 e-43 & 7.8897 e-43 \\
 $19\,000$ & 4.3424 e-44 & 3.7589 e-44 \\
 $20\,000$ & 2.2536 e-45 & 1.9349 e-45 \\
 &&\\
\hline
\end{tabular}
\end{table}

\begin{table}[h!]
\caption{$E_{\pi}(x) \le  A_{\pi}\left( \frac{\log(x)}{R} \right)^B \exp\left( -C \sqrt{\frac{\log(x)}{R}} \right) \ \text{for all}\ x\ge x_0$. See Corollary \ref{cor:alt1}.
The bold row is the result from Corollary \ref{epsilon_asymp_pi_explicit}.}
\label{table:alt1}
\begin{tabular}{|cccc|}
\hline
$ A_{\pi}$ & $B$ & $C$ & $\log(x_0)$ \\
\hline
$0.000120$ & $0.25$ & $1.00$ &$ 22.955$\\ %& Compare to \cite[Theorem 2]{Tru16} \\
%$0.319$ & $0.25$ & $1.00$ & $6.000$   & Compare to \cite[Theorem 2]{Tru16}\\
$0.826$ & $0.25$ & $1.00$ & $1.000$ \\%&Compare to \cite[Theorem 2]{Tru16}\\
%$2.72$ & $0.25$ & $1.50$ & $1.000$ \\
%$2.95 e-5$ & $0.50$ & $1.00$ &$ 25.129$ \\
%$0.311$ & $0.50$ & $1.00$ & $6.000$ \\
%$0.886$ & $0.50$ & $1.00$ & $3.000$ \\
%$0.448$ & $0.50$ & $1.50$ & $7.000$ \\
%$0.543$ & $0.50$ & $1.50$ & $6.000$ \\
$1.41$ & $0.50$ & $1.50$ & $2.000$ \\
%$1.15 e-6$ & $1.00$ & $1.00$ &$ 30.126$ \\
%$0.296$ & $1.00$ & $1.00$ & $6.000$ \\
%$1.13$ & $1.00$ & $1.00$ & $3.000$ \\
%$0.0244$ & $1.00$ & $1.50$ &$ 12.056$ \\
%$0.508$ & $1.00$ & $1.50$ & $6.000$ \\
$1.76$ & $1.00$ & $1.50$ & $3.000$ \\
%$2.28 e2$ & $1.00$ & $1.90$ & $1.000$ \\
%$5.91 e-8$ & $1.50$ & $1.00$ &$ 34.264$ \\
%$0.282$ & $1.50$ & $1.00$ & $6.000$ \\
%$1.57$ & $1.50$ & $1.00$ & $3.000$ \\
%$0.00134$ & $1.50$ & $1.50$ &$ 16.766$ \\
%$0.483$ & $1.50$ & $1.50$ & $6.000$ \\
$2.22$ & $1.50$ & $1.50$ & $3.000$ \\
$12.4$ & $1.50$ & $1.90$ & $1.000$ \\
$38.8$ & $1.50$ & $1.95$ & $1.000$ \\
{\bf121.107} & {\bf1.50} & {\bf2.00} & {\bf1.000} \\
%$168$ & $1.50$ & $1.99$ & $1.000$ \\
%$6.87 e-9$ & $2.00$ & $1.00$ &$ 36.644$ \\
%$0.268$ & $2.00$ & $1.00$ & $6.000$ \\
%$1.99$ & $2.00$ & $1.00$ & $3.000$ \\
%$7.25 e-5$ & $2.00$ & $1.50$ &$ 21.131$ \\
%$0.460$ & $2.00$ & $1.50$ & $6.000$ \\
%$2.81$ & $2.00$ & $1.50$ & $3.000$ \\
%$0.337$ & $2.00$ & $1.90$ & $8.000$ \\
%$0.722$ & $2.00$ & $1.90$ & $6.000$ \\
%$4.01$ & $2.00$ & $1.90$ & $3.000$ \\
%$2.11$ & $2.00$ & $1.95$ & $5.000$ \\
%$2.11$ & $2.00$ & $1.95$ & $5.000$ \\
%$4.11$ & $2.00$ & $1.95$ & $3.000$ \\
%$4.59$ & $2.00$ & $1.99$ & $3.000$ \\
$6.60$ & $2.00$ & $2.00$ & $3.000$ \\
%$7.08 e-10$ & $3.00$ & $1.00$ &$ 37.353$ \\
%$0.243$ & $3.00$ & $1.00$ & $6.000$ \\
%$3.18$ & $3.00$ & $1.00$ & $3.000$ \\
%$2.16 e-7$ & $3.00$ & $1.50$ &$ 29.125$ \\
%$0.416$ & $3.00$ & $1.50$ & $6.000$ \\
%$4.96$ & $3.00$ & $1.50$ & $3.000$ \\
%$0.000500$ & $3.00$ & $1.90$ &$ 16.802$ \\
%$0.629$ & $3.00$ & $1.90$ & $6.000$ \\
%$6.20$ & $3.00$ & $1.90$ & $3.000$ \\
%$0.00313$ & $3.00$ & $1.95$ &$ 13.900$ \\
%$0.671$ & $3.00$ & $1.95$ & $6.000$ \\
%$6.61$ & $3.00$ & $1.95$ & $3.000$ \\
%$0.00681$ & $3.00$ & $1.99$ &$ 12.761$ \\
%$0.706$ & $3.00$ & $1.99$ & $6.000$ \\
%$6.95$ & $3.00$ & $1.99$ & $3.000$ \\
%$0.00980$ & $3.00$ & $2.00$ &$ 12.213$ \\
%$0.715$ & $3.00$ & $2.00$ & $6.000$ \\
%$7.04$ & $3.00$ & $2.00$ & $3.000$ \\
\hline
\end{tabular}
\end{table}

\begin{table}[h!]
\caption{Other forms of bounds $E_{\pi}\le \mathcal{B}(x)$. See Corollary \ref{cor:alt2}.}
\label{table:alt2}
\begin{tabular}{|r|l|}
\hline
$\mathcal{B}(x)$& Range \\
\hline
$2\log(x)x^{-1/2}$ & $1 \leq \log(x) \leq 57$  \\
%$(10)log(x)x^{-1/2}$ & $1 \leq \log(x) \leq 331/5$  \\
%$(1/2)log(x)^{3/2}x^{-1/2}$ & $1 \leq \log(x) \leq 1269/20$  \\
$\log(x)^{3/2}x^{-1/2}$ & $1 \leq \log(x) \leq 65.65$  \\
%$2log(x)^{3/2}x^{-1/2}$ & $1 \leq \log(x) \leq 1347/20$  \\
%$(10)log(x)^{3/2}x^{-1/2}$ & $1 \leq \log(x) \leq 1419/20$  \\
$\tfrac{1}{8\pi}\log(x)^{2}x^{-1/2}$ & $8 \leq \log(x) \leq 60.8$  \\
%$(1/8)log(x)^{2}x^{-1/2}$ & $4 \leq \log(x) \leq 1313/20$  \\
%$(1/2)log(x)^{2}x^{-1/2}$ & $1 \leq \log(x) \leq 1381/20$  \\
$\log(x)^{2}x^{-1/2}$ & $1 \leq \log(x) \leq 70.6$  \\
%$2log(x)^{2}x^{-1/2}$ & $1 \leq \log(x) \leq 721/10$  \\
%$(10)log(x)^{2}x^{-1/2}$ & $1 \leq \log(x) \leq 1511/20$  \\
%$\tfrac{1}{8\pi}log(x)^{3}x^{-1/2}$ & $4 \leq \log(x) \leq 729/10$  \\
%$(1/8)log(x)^{3}x^{-1/2}$ & $2 \leq \log(x) \leq 377/5$  \\
%$(1/2)log(x)^{3}x^{-1/2}$ & $1 \leq \log(x) \leq 1569/20$  \\
$\log(x)^{3}x^{-1/2}$ & $1 \leq \log(x) \leq 80$  \\
%$2log(x)^{3}x^{-1/2}$ & $1 \leq \log(x) \leq 163/2$  \\
%$(10)log(x)^{3}x^{-1/2}$ & $1 \leq \log(x) \leq 85$  \\
%$(1/2)x^{-1/3}$ & $9 \leq \log(x) \leq 1569/20$  \\
$x^{-1/3}$ & $1 \leq \log(x) \leq 80.55$  \\
%$2x^{-1/3}$ & $1 \leq \log(x) \leq 1653/20$  \\
%$(10)x^{-1/3}$ & $1 \leq \log(x) \leq 1751/20$  \\
%$(1/2)x^{-1/4}$ & $6 \leq \log(x) \leq 524/5$  \\
$x^{-1/4}$ & $1 \leq \log(x) \leq 107.6$  \\
%$2x^{-1/4}$ & $1 \leq \log(x) \leq 552/5$  \\
%$(10)x^{-1/4}$ & $1 \leq \log(x) \leq 117$  \\
%$(1/2)x^{-1/5}$ & $5 \leq \log(x) \leq 657/5$  \\
$x^{-1/5}$ & $1 \leq \log(x) \leq 134.8$  \\
%$2x^{-1/5}$ & $1 \leq \log(x) \leq 692/5$  \\
%$(10)x^{-1/5}$ & $1 \leq \log(x) \leq 732/5$  \\
%$(1/8)x^{-1/10}$ & $8 \leq \log(x) \leq 250$  \\
%$(1/2)x^{-1/10}$ & $1 \leq \log(x) \leq 1319/5$  \\
$x^{-1/10}$ & $1 \leq \log(x) \leq 270.8$  \\
%$2x^{-1/10}$ & $1 \leq \log(x) \leq 1389/5$  \\
%$(10)x^{-1/10}$ & $1 \leq \log(x) \leq 294$  \\
%$\tfrac{1}{8\pi}x^{-1/50}$ & $9 \leq \log(x) \leq 5986/5$  \\
%$(1/8)x^{-1/50}$ & $6 \leq \log(x) \leq 6273/5$  \\
%$(1/2)x^{-1/50}$ & $1 \leq \log(x) \leq 6619/5$  \\
$x^{-1/50}$ & $1 \leq \log(x) \leq 1358.6$  \\
%$2x^{-1/50}$ & $1 \leq \log(x) \leq 6966/5$  \\
%$(10)x^{-1/50}$ & $1 \leq \log(x) \leq 7369/5$  \\
%$\tfrac{1}{8\pi}x^{-1/100}$ & $9 \leq \log(x) \leq 14621/5$  \\
%$(1/8)x^{-1/100}$ & $6 \leq \log(x) \leq 16112/5$  \\
%$(1/2)x^{-1/100}$ & $1 \leq \log(x) \leq 17901/5$  \\
$x^{-1/100}$ & $1 \leq \log(x) \leq 3757.6$  \\
%$2x^{-1/100}$ & $1 \leq \log(x) \leq 19669/5$  \\
%$(10)x^{-1/100}$ & $1 \leq \log(x) \leq 21689/5$  \\
\hline
\end{tabular}   
\end{table}

\end{small}

\end{document}